\documentclass[a4paper]{article}

\usepackage{listings}


\lstset{
    basicstyle          =   \sffamily,          
    keywordstyle        =   \bfseries,          
    commentstyle        =   \rmfamily\itshape,  
    stringstyle         =   \ttfamily,  
    flexiblecolumns,                
    numbers             =   left,   
    showspaces          =   false,  
    numberstyle         =   \zihao{-5}\ttfamily,    
    showstringspaces    =   false,
    captionpos          =   t,      
    frame               =   lrtb,   
}

\lstdefinestyle{Python}{
    language        =   Python, 
    basicstyle      =   \zihao{-5}\ttfamily,
    numberstyle     =   \zihao{-5}\ttfamily,
    keywordstyle    =   \color{blue},
    keywordstyle    =   [2] \color{teal},
    stringstyle     =   \color{magenta},
    commentstyle    =   \color{red}\ttfamily,
    breaklines      =   true,   
    columns         =   fixed,  
    basewidth       =   0.5em,
}

\usepackage{amsthm}

\usepackage{appendix}
\usepackage[all]{xy}\usepackage[latin1]{inputenc}        
\usepackage[dvips]{graphics,graphicx}
\usepackage{amsfonts,amssymb,amsmath,color,mathrsfs, amstext}
\usepackage{amsbsy, amsopn, amscd, amsxtra, amsthm,authblk, enumerate}
\usepackage{upref}
\usepackage[colorlinks,
            linkcolor=red,
            anchorcolor=red,
            citecolor=red
            ]{hyperref}

\usepackage{geometry}
\geometry{left=3.5cm,right=3.5cm,top=3cm,bottom=3cm}
\usepackage[displaymath]{lineno}
\usepackage{float}

\usepackage{yhmath}

\usepackage[normalem]{ulem}

\numberwithin{equation}{section}     

\def\b{\boldsymbol}

\def\cS{\mathcal{S}}

\newcommand{\E}{\mathbb{E}}

\newcommand{\bbR}{\mathbb{R}}

\newtheorem*{theorem*}{Theorem}

\newtheorem{theorem}{Theorem}[section]
\newtheorem{lemma}{Lemma}[section]
\newtheorem{assumption}{Assumption}[section]
\newtheorem{proposition}{Proposition}[section]
\newtheorem{remark}{Remark}[section]
\newtheorem{corollary}{Corollary}[section]

\begin{document}

\title{A sharp uniform-in-time error estimate for Stochastic Gradient Langevin Dynamics}
\author[a,c]{Lei Li\thanks{E-mail: leili2010@sjtu.edu.cn}}
\author[b]{Yuliang Wang\thanks{E-mail: YuliangWang$\_$math@sjtu.edu.cn}}
\affil[a]{School of Mathematical Sciences, Institute of Natural Sciences, MOE-LSC, Qing Yuan Research Institute, Shanghai Jiao Tong University, Shanghai, 200240, P.R.China.}
\affil[b]{School of Mathematical Sciences, Institute of Natural Sciences, Shanghai Jiao Tong University, Shanghai, 200240, P.R.China.}
\affil[c]{Shanghai Artificial Intelligence Laboratory}
\date{}
\maketitle

\begin{abstract}
    We establish a sharp uniform-in-time error estimate for the Stochastic Gradient Langevin Dynamics (SGLD), which is a widely-used sampling algorithm. Under mild assumptions, we obtain a uniform-in-time $O(\eta^2)$ bound for the KL-divergence between the SGLD iteration and the Langevin diffusion, where $\eta$ is the step size (or learning rate). Our analysis is also valid for varying step sizes. Consequently, we are able to derive an $O(\eta)$ bound for the distance between the invariant measures of the SGLD iteration and the Langevin diffusion, in terms of Wasserstein or total variation distances. Our result can be viewed as a significant improvement compared with existing analysis for SGLD in related literature.
\end{abstract}







\section{Introduction}\label{intro}

The Stochastic Gradient Langevin Dynamics (SGLD) \cite{welling2011bayesian}, first proposed by Welling and Teh in 2011, has drawn great attention of researchers from various areas, and it shows outstanding performance when dealing with sampling tasks \cite{abbati2018adageo,li2016scalable,patterson2013stochastic}.
As an online algorithm, SGLD incorporates independent white noise into the well-known Stochastic Gradient Descent (SGD), making it effective for sampling tasks. Equivalently, the SGLD algorithm can be also viewed as adding a random batch to the drift term of the Euler-Maruyama scheme for the (overdamped) Langevin diffusion, which is a time-continuous stochastic process that can converge to a target distribution $\pi$ under suitable assumptions. In this paper, we give an optimal estimate for time-discretization error (the distance between SGLD and the Langevin diffusion), and a sharp bound for the sampling error (distance between SGLD and the target distribution $\pi$ in the sampling task) in terms of Wasserstein or total variation distance as a corollary. In detail, letting $\eta$ be the constant time step (or learning rate), we prove that under mild assumptions, the time-discretization error in terms of KL-divergence is $O(\eta^2)$, which is sharp and enhances the results of most existing analyses \cite{raginsky2017non, zou2021faster, farghly2021time, mou2018generalization, chau2021stochastic,zhang2023nonasymptotic}. The result is also valid for varying step sizes. Moreover, the techniques involved in our analysis can effectively address challenges from the random batch and time discretization (see a more detailed discussion below and in Section \ref{sec:mainresults}). These techniques have the potential for further applications to analyze other stochastic processes and algorithms, for instance, a follow-up work for the sharp error estimate of the Random Batch Method (RBM) for large interacting particle system \cite{huang2024mean}.

Let us first explain the details of the SGLD method. Suppose we aim to generate samples from the target distribution $\pi \propto e^{-\beta U}$, where $U: \bbR^d\to \bbR$ is the free energy and  $\beta>0$ is the inverse temperature. One well-known and effective way to sample from $\pi$ is using overdamped Langevin diffusion, whose invariant measure is exactly $\pi$. It is described by the following stochastic differential equation (SDE) in It\^o's sense:
\begin{equation}\label{eq:overdampedlangevin}
    dX = -\nabla U(X)dt + \sqrt{2\beta^{-1}} dW, \quad X|_{t=0}=X_0,
\end{equation}
where $W$ is the Brownian motion in $\mathbb{R}^d$. The practical sampling method is then to solve the SDE above via suitable numerical schemes. After running the numerical simulation for relatively long time, one treats the obtained numerical solution for \eqref{eq:overdampedlangevin} as an approximation for $\pi$. Consider the classical Euler-Maruyama scheme for \eqref{eq:overdampedlangevin}. Given the time step (or learning rate) $\eta_k$ at $k$-th iteration, and denoting $T_k := \sum_{i=0}^{k-1} \eta_i$, the Euler-Maruyama scheme for \eqref{eq:overdampedlangevin}, which is also called the Unadjusted Langevin Algorithm (ULA), iterates as follows:
\begin{equation}\label{eq:ULA}
    \hat{X}_{T_{k+1}} = \hat{X}_{T_k} - \eta_k \nabla U(\hat{X}_{T_k}) + \sqrt{2\beta^{-1}}(W_{T_{k+1}} - W_{T_k}).
\end{equation}
Based on ULA \eqref{eq:ULA}, the key idea of SGLD is to reduce the computation cost by using the random batch when calculate the drift term $-\nabla U$. In various practical tasks such as the Bayesian inference \cite{welling2011bayesian}, people deal with the potential $U(\cdot)$ coming from high dimensional large-scaled data with size $N$, which is usually a large number. In these applications, $U(\cdot)$ is often of the form \begin{equation}\label{eq:unbiasedproperty}
    U(\cdot)=\mathbb{E}_{\xi}\left[U^{\xi}(\cdot)\right],
\end{equation}
which is the expected value of a function depending on some random variable $\xi  \in \mathcal{S}$. Motivated by the ``random mini-batch" idea from the Stochastic Gradient Descent algorithm proposed by Robbins and Monre decades ago \cite{robbins1951stochastic}, the SGLD algorithm replaces the drift $\nabla U(\cdot)=\mathbb{E}_{\xi}\left[\nabla U^{\xi}(\cdot)\right]$ by a random drift $\nabla U^{\xi}(\cdot)$, which is an unbiased estimate of the original drift. Such a modification for \eqref{eq:ULA} is expected to reduce the computational cost, and meanwhile still converge to the (overdamped) Langevin diffusion \eqref{eq:overdampedlangevin} as the time step tends to zero. In practice, one often has $U(x)=U_0(x)+\frac{1}{N}\sum_{i=1}^N U_i(x)$,
and as in SGD, $\xi$ often represents the minibatch of $\{1,\cdots, N\}$. In this case, for fixed batch-size $S$ ($S$ is a determined constant), $\xi$ belongs to the set $\mathcal{S}=\{(a_1,\dots,a_S): a_i (1\leq i \leq S) \text{ are }  $S$ \text{ different random numbers uniformly chosen from} \{1,\dots, N \}\}$. For $\xi = (a_1,\dots,a_S)$, the corresponding unbiased estimate $U^{\xi}$ is
$U^{\xi}(x)=U_0(x)+\frac{1}{S}\sum_{i=1}^S U_{a_i}(x)$.
In this paper, denoting
\begin{equation*}
    b(\cdot) := -\nabla U(\cdot), \quad b^{\xi}(\cdot) := -\nabla U^{\xi}(\cdot),
\end{equation*}
we consider the following general form of SGLD iteration:
\begin{equation}\label{sgld}
    \bar{X}_{T_{k+1}} = \bar{X}_{T_k} - \eta_k b^{\xi_k}(\bar{X}_{T_k}) + \sqrt{2\beta^{-1}}(W_{T_{k+1}} - W_{T_k}).
\end{equation}
Here, $\xi_k$ are i.i.d. sampled random batches chosen at $T_k$ ($k = 0, 1, 2, \dots$). Also, in our analysis, we consider the following continuous interpolation version which coincides with the discrete SGLD at each grid point $T_k$:
\begin{equation}\label{sgld_continuous}
     \bar{X}_t := \bar{X}_{T_k} + (t - T_k)b^{\xi_{k}}(\bar{X}_{T_k}) +  \sqrt{2\beta^{-1}}\left(W_t - W_{T_k} \right), \quad t \in \left[T_k,T_{k+1}\right).
\end{equation}
For any time $t$, denote the time marginal distributions $\rho_t$, $\bar{\rho}_t$ of solutions to \eqref{eq:overdampedlangevin} and \eqref{sgld_continuous}, repsectively. We aim to study the ``distance" between $\rho_t$ and $\bar{\rho}_t$, and we mainly consider the Kullback-Leibler (KL) divergence \cite{kullback1951information}, defined by
\begin{equation}
    D_{KL}(\mu\|\nu) := \int_{\mathbb{R}^d}  \log \frac{d\mu}{d\nu} \, \mu(dx),
\end{equation}
where $\mu$, $\nu$ are probability measures on $\mathbb{R}^d$ that are absolutely continuous with respect to each other. Note that the KL-divergence is not a distance (or metric) in mathematics, since it does not satisfy the triangular inequality.

Recent years has witnessed great theoretical development for SGLD and related algorithms. In \cite{teh2016consistency}, Teh et. al. proved stability and weak convergence to the Langevin diffusion of the SGLD algorithm.  In \cite{zhang2017hitting}, the authors studied the hitting time of SGLD for non-convex optimization problem, where the authors focused on finding a local minimum instead of analyzing the convergence to a target distribution. For sampling-aimed tasks, much SGLD-related research has been done when the target distribution is log-concave \cite{dalalyan2012sparse,dalalyan2019user,dalalyan2017theoretical}. Recently, convergence analysis for SGLD under non-convex settings has received increasing popularity. In \cite{raginsky2017non}, the authors obtained an error bound for the Wasserstein-2 distance between the SGLD iteration and the target distribution of order $O(k\eta+e^{-k\eta})$, where $k$ is the number of iterations and $\eta$ is the constant time step. In \cite{mou2018generalization}, assuming the boundedness of the drift function, the authors proved the error is of order $O(\sqrt{k\eta})$ in terms of Wasserstein-2 distance. In \cite{zou2021faster}, assuming the second-order smoothness of the drift function, the authors obtained an improved bound of order $O(\sqrt{\eta}/S + \eta + (1-\eta)^k)$ in terms of total variation distance, where $S$ is the batch size. In \cite{zhang2023nonasymptotic}, the authors proved an $O(\eta^{\frac{1}{2}} + e^{-k\eta})$ Wasserstein-1 bound and an $O(\eta^{\frac{1}{4}} + e^{-k\eta})$ Wasserstein-2 bound, and obtained a corresponding $O(\eta^{\frac{1}{4}} + e^{-k\eta})$ error bound for the expected objective function value in the optimization task. A similar error bound was obtained for dependent data streams (i.e. one allows some dependency of the random batches $\xi_0,\xi_1,\dots$) in \cite{chau2021stochastic}.
Different from most former work, where the error bounds had an at least polynomial dependency on the time $T$, in \cite{farghly2021time}, the authors obtained a uniform-in-time bound of order $O(\frac{1}{S}+\sqrt{\eta})$ in terms of the expected generalization error (equivalent to the Wasserstein-1 distance). Notably, to the best of our knowledge, so far none of the existing results has obtained better error bounds than $O(\sqrt{\eta})$, in terms of Wassersten-1, Wasserstein-2, or total variation distances. As we shall see later in Section \ref{sec:discussion}, our sampling error bound (distance between SGLD and the target distribution) is of $O(\eta + e^{-k\eta})$
in terms of any these distances above, which can be viewed as a great improvement compared with existing ones.

As discussed below, the error analysis for SGLD in this paper faces two main technical challenges: (1) difficulties arising from the time discretization; (2) difficulties arising from the random batch. Here we also briefly summarize some recent literature on these related topics, some of which would provide inspiration or even guidance to our analysis. On one hand, as introduced near \eqref{eq:unbiasedproperty}, the random batch idea arises first from SGD decades ago \cite{robbins1951stochastic}. 
In recent decades, SGD and its variants \cite{kingma2015adam,daniel2016learning,neelakantan2015adding} have received a great deal of attention when solving high-dimensional tasks in machine learning and data science. Lots of theoretical analysis for SGD has been done by former researchers, including loss landscape of SGD iteration \cite{smith2020generalization,smith2021origin}, its dynamical stability \cite{wu2018sgd} and diffusion approximation \cite{wang2022sgd,hu2019diffusion,feng2019uniform}. The application of the methodology of random mini-batch to specific dynamics could result in cheaper computational cost while preserving the dynamical and statistical properties. Examples include the SGLD algorithm we study in the paper and the random batch methods for interacting particle systems \cite{jin2020random,jin2021random}. Mainly based on the consistency property \eqref{eq:unbiasedproperty} of the random batch and the law of large numbers (in time), convergence has been established for various systems involving the random batch \cite{jin2020random, jin2021convergence}, and such general idea of the random batch may provide help when analyzing SGLD.

On the other hand, recent progress in analyzing the Euler-Maruyama discretization \eqref{eq:ULA} (which is also known as the Unadjusted Langevin Algorithm (ULA), or the Langevin Monte Carlo (LMC)) may also give some inspiration to the error estimate for SGLD. This is because ULA can be viewed as the simpler full-batch version of SGLD, and thus the optimal convergence rate of SGLD would not exceed that of ULA.
In \cite{dalalyan2017theoretical}, mainly using the first order smoothness assumption, the authors showed that $D_{KL}(\hat{\rho}_T\|\rho_T)=O(\eta T d)$ (recall the definitions for $\hat{\rho}_T$, $\rho_T$ near \eqref{sgld_continuous}). This KL-error bound is not optimal since the optimal error bound in terms of Wasserstein-2 distance is of $O(\eta)$ \cite{alfonsi2015optimal,dalalyan2019user}. In \cite{mou2019improved}, the authors improved the KL-divergence $D_{KL}(\hat{\rho}_T\|\rho_T)$ from $O(\eta T d)$ to $O(\eta^2d^2T)$, by assuming some additional conditions including the second-order smoothness of the drift. Some techniques in \cite{mou2019improved} are helpful when dealing with terms arising from the time discretization in our analysis.

We summarize here the main motivation, contributions and novelty of our work. Motivated by recent results on the optimal KL error bound for ULA \cite{mou2019improved} and algorithms involving the random batch \cite{jin2020random,jin2021convergence}, in this paper we prove an optimal error bound for KL-divergence between SGLD and the (overdamped) Langevin diffusion. Our result is based on mild assumptions for the drift function $b(\cdot)$ in \eqref{eq:overdampedlangevin} (recall that $b(\cdot) = -\nabla U(\cdot)$) and the corresponding target distribution $\pi \propto e^{-\beta U}$. Basically, we need uniform-in-batch 1st, 2nd order Lipschitz conditions, the distance dissipation condition ($x \cdot b^{\xi}(x) \lesssim -|x|^2 + 1$), and a boundedness assumption ($\mathrm{esssup}_{\xi}\, ||b^{\xi} - b||_{L^{\infty}(\mathbb{R}^d)} < \infty$) for the drift function. Also, in order to make the error bound uniform-in-time, and to make some parts of our proof (for instance, the estimation for Fisher information) more elegant, we assume a warm start condition and a log-Sobolev inequality for the target distribution $\pi$. See more details in Assumption \ref{ass:b}, \ref{ass:pi} at the beginning of Section \ref{sec:mainresults}. Based on these assumptions, our main result can be roughly stated as follows (the rigorous statement of this result is Theorem \ref{longtimesgld_etak} below):
\begin{theorem*}
Consider the probability density functions $\rho_t$, $\bar{\rho}_t$ for $X_t$, $\bar{X}_t$ defined in \eqref{eq:overdampedlangevin}, \eqref{sgld_continuous} with constant time step $\eta$. Suppose Assumptions \ref{ass:b}, \ref{ass:pi} hold. Then for small $\eta$,
\begin{equation}\label{eq:informalthm}
    \sup_{k}D_{KL}(\bar{\rho}_{T_k}\| \rho_{T_k}) \leq C \eta^2.
\end{equation}
where $C$ is a positive time-independent constant.
\end{theorem*}
Note that it is possible to prove a time-dependent bound for the discretization error $D_{KL}\left(\bar{\rho}_t \| \rho_t \right)$ without assuming the LSI for the target distribution $\pi$ (see more detailed discussion after Assumption \ref{ass:pi} in Section \ref{sec:mainresults} below).  Our result is valid for varying time steps, and we also give explicit dependence of the dimension $d$ and the inverse temperature $\beta$. See more details in Section \ref{sec:mainresults} below. Moreover, the novelty of our work can be concluded as follows:
\begin{itemize}
    \item In terms of the result, under the non-convex settings, our error bound (1) is optimal for the time step $\eta$ and is valid for the varying time step case; (2) is uniform-in-time; (3) has a regular dependency on other parameters such as the dimension $d$ and the inverse temperature $\beta$.
    \item In terms of the methodology, unlike regular approaches like the coupling method in many existing papers analyzing stochastic algorithms, especially those involving the random batch, our analysis is based on distances between \textbf{distributions} instead of \textbf{trajectories}. This enables us to improve the error bound to the optimal one (see more detailed discussions in Section \ref{sec:discussion}). Moreover, we have successfully overcome challenges from (1) the time-discretization and (2) the random batch, via a sequence of probabilistic tools. In detail, we study the time-discretization error (the main part is from $J_2$ in \eqref{eq4_9} below) from a Fokker-Planck equation for \eqref{sgld_continuous} and further calculations via Bayes' formula and integral by parts. The error term ($J_3$ in \eqref{eq4_9}) from the random batch is handled using Girsanov's transform, combined with some properties of the path measures. Such techniques may have broad applications when analyzing other random algorithms, such as tamed schemes for SDEs and random batch methods for interacting particle systems.
\end{itemize}

To end this section, we will give a brief discussion of why our analysis can achieve the sharp error bound. 
The convergence nature of the SGLD algorithm is based on the consistency of the random batch and the Markov property. Indeed, the discrete SGLD process in our setting is a Markov Chain \cite{durrett2019probability}, and it is a time-homogeneous one if the step size (or learning rate) is constant.  Let $\rho_t^{\b{\xi}}$ be the probability density of the fixed-batch version of SGLD \eqref{sgld_continuous} for a given sequence of batches $\b{\xi}:=(\xi_0, \xi_1, \cdots, \xi_k, \cdots)$ (which is actually the ULA with the drift $b^{\xi}(x)$). 
Consequently, from the consistency of the random batch (recall \eqref{eq:unbiasedproperty}), we have the following expression of the density: 
\begin{equation}\label{barpiMC}
    \bar{\rho}_t(\cdot) = \mathbb{E}_{\xi}[\rho_t^{\b{\xi}}(\cdot)].
\end{equation}
Here, $\mathbb{E}_{\xi}[\rho_t^{\b{\xi}}(\cdot)]$ means taking expectation for all possible choices of batch $\b{\xi}$. This property \eqref{barpiMC} then enables us to derive a Fokker-Planck equation for the density $\bar{\rho}_t$ of SGLD, based on the known Fokker-Planck equation for ULA (see Section \ref{sec:ULA} below for more details). By the Markov property, we are able to find that
\begin{equation*}
\mathbb{E}\left[\rho_t^{\b{\xi}} \Big| \xi_i,\,i\geq k\right] = \cS_{T_k,t}^{\xi_k} \bar{\rho}_{T_k}, \quad t \in [T_k,T_{k+1}),
\end{equation*}
where the operator $\cS_{T_k,t}^{\xi_k}$ is the evolution operator of \eqref{sgld_continuous} from $T_k$ to $t$. Now, with the explicit Fokker-Planck equations for both SGLD and the Langevin diffusion, we are able estimate the KL-divergence between $\bar{\rho}_t$ and $\rho_t$, and obtain a sharp error bound after a sequence of careful calculations and advanced tools.

As will be discussed in details in Section \ref{subsec:moredis}, our direct estimate for the distribution (instead of analysis for trajectories in many other coupling methods) is crucial to obtain the sharp error bound. In fact, in algorithms involving random batches (such as the SGD and the random batch methods), for each single step, the error of the drift is $O(1)$. 
Such algorithms are often proved to have a strong error 
like (see for example \cite{jin2020random})
\[
\sqrt{\E\|X-\bar{X}\|^2} \sim \sqrt{(e^{k\eta}-1)\eta}.
\]
Clearly, the strong error decays like $\sqrt{\eta}$, which is actually sharp. As mentioned in \cite{jin2020random},
the averaging effect in time ensures a convergence like "law of large numbers" in time so the convergence rate is $\sqrt{\eta}$. 
The strong error estimate can imply that 
\[
(\E_{\xi} W_2^2(\rho_t^{\b{\xi}}, \rho_t))^{1/2} \sim \sqrt{(e^{k\eta}-1)\eta}.
\]
This indicates that the fluctuation of the trajectory and $\rho_t^{\b{\xi}}$ is really like $\sqrt{\eta}$. Hence, the existing error estimates of SGLD based on the trajectory estimates can achieve $\sqrt{\eta}$ at most. However, for suitable distance $d(\cdot,\cdot)$, the true goal is actually $d(\mathbb{E}_{\xi} \rho_t^{\b{\xi}}, \rho_t)$ instead of $\mathbb{E}_{\xi}d( \rho_t^{\b{\xi}}, \rho_t)$. Therefore, it is possible to achieve a sharp error bound by directly studying the evolution of distributions.

The detailed analysis begins with Fokker-Planck equations for both continuous-time Langevin diffusion and its random-batch numerical scheme - SGLD. We refer to Section \ref{sec:localpf} for more technical details and we provide a high-level overview of the proof sketch in Section \ref{sec:overview}.

The rest of the paper is organized as follows. Before our main results, we  introduce some preliminaries in Section \ref{sec:pre} including the basic background and properties of the (overdamped) Langevin diffusion and its Euler-Maruyama discretization, ULA. In Section \ref{sec:mainresults}, we present and prove our main result: a sharp uniform-in-time estimate for SGLD, and some useful corollaries are also included. In Section \ref{sec:localpf}, we provide the proof for a crucial local estimate, which is omitted in Section \ref{sec:mainresults} for the convenience of readers.  Some technical details that are not so important are moved to the Appendix. In Section \ref{sec:discussion}, we perform discussion on our analysis on  Wasserstein and total variation distances between the SGLD iteration and the target distribution $\pi$ based on our main results. The ergodicity of SGLD algorithm as well as the reason why our analysis is ``sharp" are also discussed in Section \ref{sec:discussion}.

\section{Preliminaries}\label{sec:pre}
For the convenience of readers, let us begin with some basic definitions and properties that would be useful in this paper.

\subsection{(Overdamped) Langevin Diffusion}
Derived from the Langevin equation, the (overdamped) Langevin diffusion has strong, well-known physical background \cite{pavliotis2014stochastic}.
Let us first consider the following second-order SDE called Langevin equation:
\begin{equation}\label{langevin_eq}
    \ddot{X} = -\nabla U(X) - \gamma \dot{X} + \sqrt{2\gamma / \beta}\, \dot{W},
\end{equation}
where $\gamma$ is the friction coefficient, $\beta^{-1} := k_B T$ with $k_B$ being the Boltzmann constant and $T$ being the (physical) temperature. The term $\gamma \dot{X}$ here describes the linear dissipation damping, and the term $\sqrt{2\gamma / \beta} \dot{W}$ describes the fluctuation stochastic forcing. Note that the Brownian Motion $W$ is not pointwise differentiable, and so $\dot{W}$ should be understood in the distribution sense and is in fact the white noise \cite{nualart2006malliavin}. Moreover, thanks to the Fluctuation-Dissipation Theorem \cite{kubo1966fluctuation}, we are able to relate the dissipation and the fluctuation by considering the covariance of the fluctuation term.

Langevin equation \eqref{langevin_eq} describes a particle  moving according the Newton's second law and  being in contact with  a heat reservoir that is at equilibrium at temperature $T$. This physical is called an open classical system. To see this more clearly, we rewrite the Langevin equation \eqref{langevin_eq} into an SDE system. Letting $Y = \dot{X}$, we have
\begin{equation}
\left\{
    \begin{array}{ll}
        dX = Ydt, & \\
        dY = -\nabla U(X) dt - \gamma Y dt + \sqrt{2\gamma \beta^{-1}} dW . &
    \end{array}
\right.
\end{equation}

It is also well-known that if we consider the time rescaling $X^{\gamma}(t) = X(\gamma t)$, and let $\gamma \rightarrow +\infty$, the Langevin equation \eqref{langevin_eq} turns into the overdamped Langevin diffusion \cite{pavliotis2014stochastic}:
\begin{equation}
    dX = -\nabla U(X)dt + \sqrt{2\beta^{-1}} dW.
\end{equation}

The derived (overdamped) Langevin diffusion above has some pretty properties. First, it has invariant distribution $\pi \propto e^{-\beta U}$. Second, the invariant distribution $\pi$ satisfies the detailed balance since the probability flux $J(\pi) := b\cdot \pi - \frac{1}{2} \nabla \cdot(\Sigma \pi)$ equals zero with $b = -\nabla U$ and $\Sigma = \sqrt{2\beta^{-1}}I$ here, and thus the stationary diffusion is reversible \cite{pavliotis2014stochastic}. Third, the density of the Langevin diffusion satisfies the following  Fokker-Planck equation:
\begin{equation}\label{FP}
    \partial_t \rho_t = -\nabla \cdot (\rho_t b) +\beta^{-1} \Delta \rho_t, \quad \rho_t|_{t=0} = \rho_0.
\end{equation}
with $b = -\nabla U$.
Moreover, if we add some stronger assumptions to the potential $U$ like strongly convexity, the Langevin diffusion above is guaranteed to converge to equilibrium \cite{markowich2000trend}. Note that in this paper, instead of log-concaveness of the invariant distribution $\pi$, we add the much weaker assumption that  $\pi$ satisfies a Log-Sobolev inequality (LSI).


\subsection{Unadjusted Langevin Algorithm (ULA)}\label{sec:ULA}
Among the classical numerical schemes for the Langevin diffusion \eqref{eq:overdampedlangevin}, the simplest one is the well-known Euler-Maruyama scheme, which is also known as the Unadjusted Langevin Algorithm (ULA) or the Langevin Monte Carlo (LMC). Consider this Euler-Maruyama scheme  for \eqref{eq:overdampedlangevin}
\begin{equation}\label{ula}
    \hat{X}_{T_{k+1}} = \hat{X}_{T_k} + \eta_k b(\hat{X}_{T_k}) + \sqrt{2\beta^{-1}\eta_k}Z_k,\quad Z_k \sim N(0,I_d) \quad i.i.d. ,
\end{equation}
where $\eta_k$ is the  time step at $k$-th iteration and $T_k := \sum_{i=0}^{k-1} \eta_i$. \eqref{ula} naturally admits the following continuous version:
\begin{equation}\label{ula_continuous}
    \hat{X}_t := \hat{X}_{T_k} + (t - T_k)b(\hat{X}_{T_k}) +  \sqrt{2\beta^{-1}} \left(W_t - W_{T_k} \right), \quad t \in \left[T_k,T_{k+1}\right),
\end{equation}
where $W_t$ is the Brownian Motion in $\mathbb{R}^d$. We also denote $\hat{\rho}_t(x)$  the probability density function of  $\hat{X}_t$. It is not difficult to show  that $\hat{\rho}_t$ satisfies a Fokker-Planck equation as in the following lemma:
\begin{lemma}\label{lmm:ulafp}
Denote $\hat{\rho}_t(x)$  the probability density function of  $\hat{X}_t$ defined in \eqref{ula_continuous}. Then the following Fokker-Planck equation holds:
\begin{equation}\label{FPhat}
    \partial_t \hat{\rho}_t = -\nabla \cdot (\hat{\rho}_t \hat{b}_t) + \beta^{-1} \Delta \hat{\rho}_t, \quad \hat{\rho}_t|_{t=0} = \rho_0,
\end{equation}
where
\begin{equation}
    \hat{b}_t(x) := \mathbb{E}\left[b\left( \hat{X}_{T_k}\right) |\hat{X}_t = x\right], \quad t \in \left[T_k,T_{k+1}\right).
\end{equation}
\end{lemma}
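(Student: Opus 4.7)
The plan is to derive the Fokker-Planck equation by testing against smooth functions, applying It\^o's formula on each sub-interval, and then using the tower property of conditional expectation to encode the non-Markovian drift $b(\hat{X}_{T_k})$ as a function of the current state $\hat{X}_t$. This is essentially the classical ``mimicking'' trick (in the spirit of Gy\"ongy's lemma): although $\hat{X}_t$ is not a standard Markov diffusion because its drift is frozen at the previous grid point, its marginal law can be reproduced by an It\^o diffusion whose drift is the conditional expectation of $b(\hat{X}_{T_k})$ given $\hat{X}_t$.

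Concretely, I would fix $t \in [T_k, T_{k+1})$ and note that on this interval the continuous ULA process satisfies
\begin{equation*}
    d\hat{X}_s = b(\hat{X}_{T_k})\,ds + \sqrt{2\beta^{-1}}\,dW_s, \qquad s \in [T_k, T_{k+1}),
\end{equation*}
with drift measurable with respect to $\mathcal{F}_{T_k}$. For any test function $\varphi \in C_c^\infty(\mathbb{R}^d)$, It\^o's formula gives
\begin{equation*}
    d\varphi(\hat{X}_s) = \nabla \varphi(\hat{X}_s)\cdot b(\hat{X}_{T_k})\,ds + \beta^{-1}\Delta \varphi(\hat{X}_s)\,ds + \sqrt{2\beta^{-1}}\,\nabla\varphi(\hat{X}_s)\cdot dW_s.
\end{equation*}
Taking expectation kills the martingale term, and then I would invoke the tower property: since $\nabla\varphi(\hat{X}_s)$ is $\sigma(\hat{X}_s)$-measurable,
\begin{equation*}
    \mathbb{E}\bigl[\nabla\varphi(\hat{X}_s)\cdot b(\hat{X}_{T_k})\bigr] = \mathbb{E}\bigl[\nabla\varphi(\hat{X}_s)\cdot \mathbb{E}[b(\hat{X}_{T_k})\mid \hat{X}_s]\bigr] = \int \nabla\varphi(x)\cdot \hat{b}_s(x)\,\hat{\rho}_s(x)\,dx.
\end{equation*}
Rewriting $\frac{d}{ds}\mathbb{E}[\varphi(\hat{X}_s)] = \int \varphi\,\partial_s \hat{\rho}_s\,dx$ and integrating by parts in the spatial integrals yields the weak form of \eqref{FPhat}; since $\varphi$ is arbitrary, the PDE follows in the distributional sense, and standard parabolic regularity for the heat-type operator with bounded-in-space drift upgrades this to the claimed form.

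The main obstacle I anticipate is justifying that the conditional-expectation drift $\hat{b}_t$ is well-defined and regular enough to make the integration by parts rigorous, particularly because $\hat{X}_{T_k}$ and $\hat{X}_t$ are related by an explicit affine-plus-Gaussian map. The convenient point is that for $t > T_k$ the law of $\hat{X}_t$ given $\hat{X}_{T_k}$ is a nondegenerate Gaussian of variance $2\beta^{-1}(t-T_k) I_d$, so $\hat{\rho}_t$ is smooth and strictly positive away from $t=T_k$, and $\hat{b}_t$ can be written explicitly as a Gaussian-kernel weighted average of $b$. A secondary, minor point is handling the endpoint behavior at the grid times $T_k$: $\hat{b}_t$ is discontinuous across $t = T_k$ (jumping from a conditional expectation back to the pointwise value $b(\hat{X}_{T_k})$), but $\hat{\rho}_t$ stays continuous in $t$, so the Fokker-Planck equation holds on each open interval $(T_k, T_{k+1})$ and the initial condition $\hat{\rho}_t|_{t=0} = \rho_0$ is inherited from the construction.
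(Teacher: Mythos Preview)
Your proposal is correct. The core idea---replacing the frozen drift $b(\hat{X}_{T_k})$ by its conditional expectation given the current state---is the same as the paper's, but the implementation is the dual one. The paper works directly at the density level: it writes down the Fokker-Planck equation for the \emph{conditional} density $\hat{\rho}_t\mid\hat{\mathcal{F}}_{T_k}$ (which has constant drift $b(\hat{X}_{T_k})$), then takes expectation over $\hat{\mathcal{F}}_{T_k}$ and uses Bayes' law on the joint density $\hat{\rho}_{t,T_k}(x,y)$ to turn the averaged drift term into $-\nabla\cdot(\hat{\rho}_t\,\hat{b}_t)$. You instead test against $\varphi\in C_c^\infty$, apply It\^o, and invoke the tower property $\mathbb{E}[\nabla\varphi(\hat{X}_s)\cdot b(\hat{X}_{T_k})]=\mathbb{E}[\nabla\varphi(\hat{X}_s)\cdot\hat{b}_s(\hat{X}_s)]$ before integrating by parts. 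Your route is arguably cleaner in that it avoids manipulating conditional densities and makes the weak formulation explicit; the paper's route is more hands-on with the transition kernel. Your remarks on regularity of $\hat{b}_t$ via the explicit Gaussian transition and on the endpoint behavior at grid times are apt and go slightly beyond what the paper spells out.
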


The derivation is not difficult and has appeared in many classical work \cite{mou2019improved}. Indeed, for $t \in \left[T_k, T_{k+1} \right)$, consider the random variable $\hat{\rho}_t|\hat{\mathcal{F}}_{T_k}$, where $\hat{\mathcal{F}}_{T_k} = \sigma (\hat{X}_s,s\leq T_k)$. Then, by definition of $\hat{X}_t$, $\hat{\rho}_t|\hat{\mathcal{F}}_{T_k}$ satisfies:
\begin{equation}
    \partial_t (\hat{\rho}_t|\hat{\mathcal{F}}_{T_k}) = -\nabla \cdot \left( b(\hat{X}_{T_k})(\hat{\rho}_t|\hat{\mathcal{F}}_{T_k})\right) + \beta^{-1} \Delta (\hat{\rho}_t|\hat{\mathcal{F}}_{T_k}), \quad t \in \left[T_k, T_{k+1} \right).
\end{equation}
Taking expectation, we have
\begin{equation}
    \mathbb{E}\left[\partial_t (\hat{\rho}_t|\hat{\mathcal{F}}_{T_k})\right] = \partial_t \hat{\rho}_t,\quad 
    \mathbb{E}\left[\Delta (\hat{\rho}_t|\hat{\mathcal{F}}_{T_k}) \right] = \Delta \hat{\rho}_t,
\end{equation}
and for the drift term,
\begin{equation}
    \begin{aligned}
    \mathbb{E}\left[-\nabla \cdot \left( (\hat{\rho}_t|\hat{\mathcal{F}}_{T_k})(x) b(\hat{X}^\xi_{T_k})\right) \right] & =-\nabla \cdot \int (\hat{\rho}_t|\hat{\mathcal{F}}_{T_k})(x|y)b(y)\hat{\rho}_{T_k}(y)dy\\
    &= -\nabla \cdot \int b(y) \hat{\rho}_{t, T_k}(x, y) dy\\
    & = -\nabla \cdot \left( \hat{\rho}_t(x)\mathbb{E} \left[ b(\hat{X}^\xi_{T_k}) | \hat{X}_t = x \right]\right),
    \end{aligned}
\end{equation}
where $\hat{\rho}_{t, T_k}$ indicates the joint distribution density of $\hat{\rho}_t$ and $\hat{\rho}_{T_k}$.
Note that we use Bayes' law  in the second equality.
Combining all the above, we obtain the desired result \eqref{FPhat}.

\section{Main results}\label{sec:mainresults}
In this section, we present our main result of this paper: a sharp uniform-in-time error estimate for SGLD. In Section \ref{sec:ass}, we present the assumptions required by later analysis. In Section \ref{auxiliary}, we give some auxiliary results, including propagation of LSI of Langevin diffusion, moment control for ULA, and estimation of the Fisher information for ULA. After these preparations, in Section \ref{sec:mainthm}, we present the main theorem, Theorem \ref{longtimesgld_etak}, and a crucial local result, Proposition \ref{local_estimate}.

\subsection{Assumptions}\label{sec:ass}
Before the main results and proofs, we firstly give the following assumptions we use throughout this paper. Recall the definition of the drift function $b$ in SGLD and its target distribution $\pi$ in Section \ref{intro}.  For technical reasons, we will need the following assumptions.
\begin{assumption}\label{ass:b}
\quad
\begin{itemize}
    \item[(a)] (1st order smoothness) For a.e. $\xi \sim \nu$, $b^{\xi}$ is  Lipschitz with a uniform constant $L$, i.e. $\forall \xi$,  $\forall x,y \in \mathbb{R}^d$,
    \begin{equation}
        |b^{\xi}(x) - b^{\xi}(y)| \leq L |x - y|.
    \end{equation}
    \item[(b)] (2nd order smoothness) For a.e. $\xi \sim \nu$, $\nabla b^{\xi}$ is  Lipschitz with a uniform constant $L_2$, i.e. $\forall \xi$, $\forall x,y \in \mathbb{R}^d$,
    \begin{equation}\label{eq:lipJacob}
        |\nabla b^{\xi}(x) - \nabla b^{\xi}(y)| \leq L_2 d^{-\frac{1}{2}} |x - y|.
    \end{equation}
    \item[(c)] (distance dissipation) For a.e. $\xi \sim \nu$, $b^{\xi}$ is confining in the sense that 
    \begin{equation}
        x \cdot b^{\xi}(x) \leq -\mu |x|^2 + \sigma
    \end{equation}
    with $\mu$, $\sigma$ being positive constants.
    \item[(d)] (boundedness) $b^{\xi} - b$ is uniformly bounded, namely,
    \begin{equation}\label{eq:bddiff}
        \mathrm{esssup}_{\xi}\, ||b^{\xi} - b||_{L^{\infty}(\mathbb{R}^d)} < + \infty.
    \end{equation}
\end{itemize}
Moreover, the coefficient $L$, $L_2$, $\mu$, $\sigma$  are independent of $\xi$ and $d$.
\end{assumption}

In condition (a), the Lipschitz constant $L$ is the upper bound of the spectrum norm (largest singular value) of the Jacobian matrix and it is insensitive to the dimension $d$. 
In \eqref{eq:lipJacob} in condition (b), 
we have put $d^{-1/2}$ in the constant and assumed $L_2$ to be independent of $d$. The intuition is that the left hand side is the spectral norm of the Jacobian matrix, which can be assumed to be insensitive to $d$  while $|x-y|$ scales like $\sqrt{d}$. It is in fact unnatural to assume this for every $x, y$ to hold. It will be more natural to assume $\E|\nabla b^{\xi}(X) - \nabla b^{\xi}(Y)|^2 \leq L_2^2 d^{-1}\E |X - Y|^2$ for random variables $X, Y$ whose distributions are close to being isotropic. Such an assumption, however, requires the distribution of the arguments and is thus difficult to use. The simple assumption \eqref{eq:lipJacob} shall be reasonable in the average sense and therefore we believe it captures the correct scaling.
As we shall later, putting such a factor can make the bounds of the relative entropy and Fisher information linear in $d$ (see our discussion at the end of Section \ref{subsec:moredis}), which is the correct scaling. This is a difference from the result in \cite{mou2019improved}.

Note that condition (d) is equivalent to saying that $b^{\xi}-b^{\tilde{\xi}}$ is also uniformly bounded for two different $\xi$, $\tilde{\xi}$. 
In (d) of Assumption \ref{ass:b}, we do not need the uniform boundedness for every $b^{\xi}$, though (d) naturally holds when $\mathrm{esssup}_{\xi} \|b^{\xi}(x)\|_{\infty} < \infty$, which is a much stronger condition. Actually the condition (d) is natural in applications. In many problems in machine learning, the loss has the same asymptotics for different data point. For example, people often put some regularization term of the form $R_{\lambda}(x)=\frac{\lambda}{2}|x|^2$. This then gives the same asymptotic behavior far away. Moreover, condition (d) of Assumption \ref{ass:b} does not necessarily require the uniform boundedness of $b$ (although in some earlier papers regarding SGLD, the uniform boundedness condition for the drift is assumed, see for instance, Theorem 1 of \cite{mou2018generalization}). Take the classical Bayesian inference as an example. The drift term under this setting is in fact $b(\cdot) = \nabla \log p(\cdot) + \frac{1}{N} \sum_{i=1}^N \nabla \log p(x_i | \cdot)$, where $p(\cdot)$ is the prior, and $x_1,\dots,x_i$ are data items. Then, the random batch approximation in \cite{welling2011bayesian} is $b^\xi =  \nabla \log p(\cdot) + \frac{1}{N} \sum_{i=1}^S \nabla \log p(x_{a_i} | \cdot)$, where $S$ is the batch size, and $(a_1,\dots, a_S)$ is a random subset of $(1,\dots, N)$. Clearly, in this case, the condition (d) does not assume any restriction on the prior distribution $p(\cdot)$.

In our paper, condition (c) is only used  to control the moments. Actually condition (c) is the confinement condition which is crucial in literature for ergodicity. In our case, the log Sobolev inequality later in Assumption \ref{ass:pi} actually plays the role of confinement for ergodicity.   Moreover, it is not difficult to see that if one has the stronger boundedness condition $\mathrm{esssup}_{\xi} \sup_x |b^{\xi}(x)| < \infty$, then conditions (c) and (d) can be removed since the only place we use the distance dissipation condition (c) is the moment control (Proposition \ref{thm:moment}), but the only place we use the moment control is bounding terms like $b^{\xi}(X_t)$. Now if we assume each $b^{\xi}$ is bounded, we do not need the result for moment control any more.

\begin{assumption}\label{ass:pi}
\quad
\begin{itemize}
    \item[(a)] ($\lambda$-warm start) There exists $\lambda \geq 1$ such that the initial distribution $\rho_0$ satisfies
    \begin{equation}
        \frac{1}{\lambda} \leq \frac{\rho_0}{\pi} \leq \lambda,
    \end{equation}
    where $\pi \propto  e^{-\beta U}$ is the invariant distribution of \eqref{eq:overdampedlangevin}. Note that $\rho_0$ is the initial distribution for all the processes $X$, $\hat{X}$, and $\bar{X}$, and $\lambda$ is independent of the dimension $d$.
     \item[(b)] (LSI for $\pi$) The invariant distribution $\pi\propto e^{-\beta U}$ satisfies a Log-Sobolev inequality with a constant $ \kappa(\beta)$, namely, $\forall f \in C_{\ge 0}^{\infty}$,
    \begin{equation}
    Ent_{\pi} (f) := \int f \log f d\pi -\left(\int f d\pi\right) \log\left(\int f d\pi\right) \leq \kappa(\beta) \int \frac{|\nabla f|^2}{f} d\pi.
    \end{equation}
    Here, $C_{\ge 0}^{\infty}$ consists of all nonnegative smooth functions, and we set $|\nabla f|^2/f = 0$ if $f = 0$.
\end{itemize}
\end{assumption}

The Log-Sobolev inequality (LSI), first discussed by Gross in 1975 \cite{gross1975logarithmic}, is a necessary assumption to establish our uniform-in-time result. Besides the simplest Gaussian distribution, other distributions satisfying an LSI can be found following the Bakry-Emery criterion \cite{bakry1985diffusions}, including strongly log-concave ones. It is also shown that distributions that are strongly log-concave outside a compact set also satisfy the LSI \cite{ledoux1999concentration}. 
We remark that the log-concaveness outside a compact set can imply both the distance dissipation and the log Sobolev inequality so the LSI assumption is much weaker than the log-concaveness assumption in many other literature. However, the distance dissipation condition $x\cdot b\le -\mu |x|^2+\sigma$ outside a compact set can not derive the corresponding LSI so neither the distance dissipation nor the LSI assumption can be simply removed though they seem to have repetition somehow. Note that the distance dissipation condition, which is weaker than convexity, is essential when controlling the moment of SDE solutions, and is a commonly assumed condition in other related results \cite{zou2021faster, mou2019improved}. 

We also remark here that it is possible to prove a time-dependent error bound for $D_{KL}\left(\bar{\rho}_t \| \rho_t \right)$ if we remove Assumption \ref{ass:pi}. In fact, tracking our proof, conditions (a), (b) in Assumption \ref{ass:pi} are only used when (1) proving a uniform log-Sobolev inequality for $\rho_t$ (density for the Langevin diffusion); (2) making the error bound uniform-in-time via the log-Sobolev inequality for $\rho_t$ at \eqref{LSIapply}; (3) providing an $O(1)$-upper bound for the Fisher information in Lemma \ref{Mfisher}. Note that without Assumption \ref{ass:pi}, since we are still assuming the smoothness of the drift, the estimation for Fisher information is still possible via other calculation approaches, such as analysis based on the heat kernel \cite{mou2019improved}, or based on the Stam's convolution equality \cite{stam1959some, huang2024mean}


\subsection{Some auxiliary results}\label{auxiliary}
Before presenting the main theorem, we give some useful auxiliary results first, including the propagation of LSI for the Langevin diffusion \eqref{eq:overdampedlangevin},  moment control for ULA \eqref{ula_continuous}, and estimation of Fisher information for ULA \eqref{ula_continuous}.

\subsubsection{Propagation of Log-Sobolev inequality}
In this section, we aim to establish a Log-Sobolev inequality (LSI) for $\rho_t$, which is the density of $X_t$ in the Langevin diffusion \eqref{eq:overdampedlangevin}. 

\begin{proposition}\label{LSIpropagation}
Consider the Fokker-Planck equation \eqref{FP} associated with the SDE \eqref{eq:overdampedlangevin}. Suppose  Assumption \ref{ass:pi} holds. Then, $\rho_t$ satisfies a LSI with a uniform LSI constant $\lambda^2C_{\pi}^{LS}(\beta)$.
\end{proposition}



The following Holley-Stroock perturbation lemma \cite{bakry2014analysis} is essential to our proof :
\begin{lemma}[Holley-Stroock perturbation]\label{lmm2}
Suppose the probability measure $\nu \in \mathcal{P}(\mathbb{R}^d)$ satisfies an LSI with constant $\kappa$, and $\mu \in \mathcal{P}(\mathbb{R}^d)$ satisfies $d\mu = h d\nu$ with the uniform bounds $\frac{1}{\lambda} \leq h \leq \lambda$. Then $\mu$ satisfies an LSI with constant $\lambda^2\kappa$.
\end{lemma}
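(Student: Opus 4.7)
The plan is to reduce the LSI for $\mu$ to the LSI for $\nu$ using the two-sided Radon--Nikodym bound $1/\lambda \leq h \leq \lambda$, paying one factor of $\lambda$ when transferring the entropy from $\mu$ to $\nu$ and another factor of $\lambda$ when transferring the Fisher information back from $\nu$ to $\mu$. Multiplicatively, this produces the claimed $\lambda^2$ in the LSI constant.

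The key ingredient is the variational representation of the entropy functional: for any probability measure $\pi$ and nonnegative smooth $f$,
\begin{equation*}
\mathrm{Ent}_\pi(f) \;=\; \inf_{c > 0} \int \Phi(f, c)\, d\pi, \qquad \Phi(f, c) := f \log(f/c) - f + c,
\end{equation*}
where the integrand $\Phi(f, c) \geq 0$ pointwise (the scalar map $x \mapsto x\log x - x + 1$ being nonnegative on $(0,\infty)$), and the infimum is attained at $c_* = \int f\, d\pi$, recovering the usual formula. First I would apply this representation to $\mu$ and use $d\mu = h\, d\nu \leq \lambda\, d\nu$ together with $\Phi \geq 0$ to obtain
\begin{equation*}
\mathrm{Ent}_\mu(f) \;=\; \inf_{c > 0} \int \Phi(f, c)\, h\, d\nu \;\leq\; \lambda \inf_{c > 0} \int \Phi(f, c)\, d\nu \;=\; \lambda\, \mathrm{Ent}_\nu(f).
\end{equation*}
Next, I would invoke the assumed LSI for $\nu$ to control $\mathrm{Ent}_\nu(f) \leq C_\nu^{LS} \int |\nabla f|^2 f^{-1}\, d\nu$. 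Finally, using $d\nu = h^{-1}\, d\mu \leq \lambda\, d\mu$ on the Fisher information side yields
\begin{equation*}
\int \frac{|\nabla f|^2}{f}\, d\nu \;\leq\; \lambda \int \frac{|\nabla f|^2}{f}\, d\mu,
\end{equation*}
and chaining the three estimates gives $\mathrm{Ent}_\mu(f) \leq \lambda^2 C_\nu^{LS} \int |\nabla f|^2 f^{-1}\, d\mu$, which is the desired LSI for $\mu$.

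The subtle step is the first one: one cannot directly bound $\int f \log f\, d\mu$ by $\lambda \int f \log f\, d\nu$ since $f \log f$ changes sign, and estimating the $\int f \log f$ piece and the $(\int f) \log(\int f)$ piece of the entropy separately would force the two inequalities in opposite directions. The variational representation circumvents this by writing $\mathrm{Ent}_\mu(f)$ as the infimum of integrals of a \emph{nonnegative} integrand, after which the sandwich bound $1/\lambda \leq h \leq \lambda$ produces clean multiplicative control. Extending from $f \in C_{>0}^\infty$ to the standard admissible class is a routine truncation and mollification argument, and not the main difficulty.
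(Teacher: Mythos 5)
The paper does not give a proof of this lemma; it is stated and attributed to \cite{bakry2014analysis}. Your argument is the standard Holley--Stroock proof found in that reference (via the variational formula $\mathrm{Ent}_\pi(f)=\inf_{c>0}\int(f\log(f/c)-f+c)\,d\pi$ with a pointwise nonnegative integrand, applied once on the entropy side and once on the Fisher-information side), and it is correct as written.
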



\begin{proof}[Proof of Proposition \ref{LSIpropagation}:]
\begin{equation}
    \rho_t = \frac{\rho_t}{\pi}\pi =: q_t \pi.
\end{equation}
Since $b = \beta^{-1}\nabla \log \pi$, the detailed balanced is satisfied \cite{markowich2000trend}. So it is well-known that  $q_t$ satisfies the following backwards Kolmogorov equation \cite{li2020large}:
\begin{equation}\label{qt}
    \partial_t q_t = b \cdot \nabla q_t + \beta^{-1} \Delta q_t, \quad q_t|_{t=0}=\frac{\rho_0}{\pi},
\end{equation}
and $q_t$ can be expressed by the following conditional expectation form:
\begin{equation}\label{qt_E}
    q_t(x) = \mathbb{E}\left[q_0(X_t) | X_0 = x\right].
\end{equation}
Then, as a direct consequence of \eqref{qt_E}, we know that at any time $t>0$, $\inf_x q_t \geq \inf_x q_0$, and $\sup_x q_t \leq \sup_x q_0$.
Combining this fact and condition (a) in Assumption \ref{ass:pi}, it holds that
\begin{equation}\label{qtbound}
    \frac{1}{\lambda} \leq q_t \leq \lambda, \quad \forall t \geq 0.
\end{equation}
Then, combining \eqref{qtbound} , Lemma \ref{lmm2}, and (b) in Assumption \ref{ass:pi}, the conclusion holds.

\end{proof}

Note that the fact $\inf_x q_t \geq \inf_x q_0$, $\sup_x q_t \leq \sup_x q_0$ can also be derived from classical results of PDE. Indeed, since the backward Kolmogorov equation \eqref{qt} for $q_t$ is of parabolic type, the maximal principle holds, whose details can be found in Chapter 7 of \cite{evans2022partial}. By maximal principle and condition (a) in Assumption \ref{ass:pi}, we also obtain
\begin{equation}
     \frac{1}{\lambda} \leq q_t \leq \lambda, \quad \forall t \geq 0.
\end{equation}

\subsubsection{Moment control}
In this section, we aim to find a uniform-in-time bound for the moments $\mathbb{E}|\bar{X}^\xi_t|^p$ with $\bar{X}^{\xi}$ defined in \eqref{barpiMC} of Section \ref{intro} and $p=2,4$. The details could be found in the proposition below:

\begin{proposition}\label{thm:moment}
Consider the process $\bar{X}_t$ defined in \eqref{sgld_continuous} of Section \ref{intro}. Suppose (a), (c) of Assumption \ref{ass:b} holds. For all integer $p \geq 1$, there exists positive constants $c_p$ and $\Delta_p$ independent of $t$ and $\b{\xi}$ such that if $\eta_k \leq \Delta_p$ for all $k$, then
\begin{equation}\label{momentbdd}
    \mathbb{E}\left[|\bar{X}_t|^p \Big| \mathcal{F}_{\xi}\right]\leq c_p d^{\frac{p}{2}}\left(1 + \beta^{-\frac{p}{2}} \right), \quad \forall t \geq 0.
\end{equation}
Moreover, for all integer $p \geq 1$, there exists a   positive constant $\alpha_p$  such that when $\alpha < \alpha_p$,
\begin{equation}
    \sup_{t\geq 0} \mathbb{E}\left[e^{\alpha|\bar{X}_t|^p}\Big| \mathcal{F}_{\xi}\right] < + \infty. 
\end{equation}
Here, $\mathcal{F}_{\xi}$ denotes the $\sigma$-algebra generated by $\b{\xi}=(\xi_k)_{k\ge 0}$.
\end{proposition}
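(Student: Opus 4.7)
Proof proposal.

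My plan is to work conditionally on $\mathcal{F}_\xi$, so the batch sequence $\b{\xi}$ is frozen and $\{\bar X_{T_k}\}_k$ is an inhomogeneous Markov chain driven only by the Gaussians $Z_k$; all constants appearing below will be $\b{\xi}$-uniform because (a) and (c) hold with $\xi$-independent constants. A preliminary observation that I would record first is a $\xi$-uniform bound on $|b^{\xi}(0)|$: applying (c) at $x=tv$ for a unit vector $v$ and using Lipschitz to control $b^{\xi}(tv)-b^{\xi}(0)$ gives $tv\cdot b^{\xi}(0)\le (L-\mu)t^2+\sigma$ for every $t>0$, and optimization in $t$ yields $|b^{\xi}(0)|\le C_0(L,\mu,\sigma)$, hence the quadratic growth $|b^{\xi}(x)|^2\le 2L^2|x|^2+2C_0^2$ that I will use throughout.

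For the polynomial moment bound I would start with $p=2$. Squaring the recursion \eqref{sgld}, conditioning on $\bar X_{T_k}$, and using $\mathbb{E}|Z_k|^2=d$ together with (c) applied to the linear-in-drift cross term yields
\begin{equation*}
\mathbb{E}\bigl[|\bar X_{T_{k+1}}|^2\bigm|\bar X_{T_k},\mathcal{F}_\xi\bigr]\le (1-2\mu\eta_k+2L^2\eta_k^2)|\bar X_{T_k}|^2+C\eta_k(1+d).
\end{equation*}
Choosing $\Delta_2$ small enough that the prefactor is at most $e^{-\mu\eta_k}$ and iterating, the discrete convolution $\sum_j\eta_j e^{-\mu(T_k-T_{j+1})}\lesssim 1/\mu$ produces $\sup_k\mathbb{E}[|\bar X_{T_k}|^2\mid\mathcal{F}_\xi]\le c_2 d$. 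The off-grid bound for $t\in[T_k,T_{k+1})$ follows from $|\bar X_t|^2\le 3|\bar X_{T_k}|^2+3\eta_k^2|b^{\xi_k}(\bar X_{T_k})|^2+6\beta^{-1}|W_t-W_{T_k}|^2$. For general even $p$ I would bootstrap the same contraction-plus-constant scheme applied to $|\bar X_{T_{k+1}}|^p$: the binomial expansion produces a dominant term $(1-p\mu\eta_k+O(\eta_k^2))|\bar X_{T_k}|^p$, all lower-order polynomial terms in $|\bar X_{T_k}|$ are absorbed by Young's inequality and the previously established lower moments, and the Gaussian contributions give factors $\mathbb{E}|Z_k|^{2j}\sim d^j$, yielding the claimed scaling $c_p d^{p/2}$; odd $p$ follows by Jensen.

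For the exponential moment the plan is to use the Lyapunov function $V_\alpha(x)=\exp(\alpha|x|^p)$ and establish a one-step drift inequality $\mathbb{E}[V_\alpha(\bar X_{T_{k+1}})\mid\bar X_{T_k},\mathcal{F}_\xi]\le e^{-c\eta_k}V_\alpha(\bar X_{T_k})+C(\alpha)\eta_k$, valid for $\alpha<\alpha_p$ and $\eta_k$ sufficiently small. The idea is to write $\bar X_{T_{k+1}}=x+\eta_k b^{\xi_k}(x)+\sqrt{2\beta^{-1}\eta_k}Z_k$, expand $|\bar X_{T_{k+1}}|^p$ to first order in $\eta_k$ around $|x|^p$ --- the leading deterministic contribution $p|x|^{p-2}x\cdot b^{\xi_k}(x)\le -p\mu|x|^p+p\sigma|x|^{p-2}$ is supplied by (c) --- and then evaluate the Gaussian expectation over $Z_k$; for $p=2$ this is the explicit Gaussian integral $\int\exp(\alpha|x+\sqrt{2\beta^{-1}\eta_k}z|^2)e^{-|z|^2/2}(2\pi)^{-d/2}dz$, which is finite whenever $2\alpha\beta^{-1}\eta_k<1$ and, after dissipation is used, matches the desired inequality. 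Iteration then yields $\sup_k\mathbb{E}[V_\alpha(\bar X_{T_k})\mid\mathcal{F}_\xi]<\infty$, and the $t\in[T_k,T_{k+1})$ case follows because $\mathbb{E}\exp(\alpha'|W_t-W_{T_k}|^p)$ is finite for small $\alpha'$.

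The main obstacle is the exponential case for $p>2$: $\mathbb{E}\exp(\alpha|x+\sqrt{\eta_k}Z_k|^p)$ lacks a clean closed form, so one has to Taylor-expand $|x+h|^p=|x|^p(1+\delta)^{p/2}$ with $\delta=O(\eta_k+\sqrt{\eta_k}|Z_k|/|x|)$ and control the remainder using the polynomial moments already established, which typically forces $\alpha_p$ to shrink as $p$ grows and may require an auxiliary quadratic Lyapunov function $\exp(\alpha|x|^2)$ to absorb cross terms before exponentiating. All other steps are routine iterations of the kind already carried out for contractive Markov chains.
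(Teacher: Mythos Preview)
Your proposal is correct for the polynomial moment bound and takes a genuinely different route from the paper. The paper works with the continuous interpolation \eqref{sgld_continuous} and applies It\^o's formula to $|\bar X_t|^p$ directly, obtaining a differential inequality of the form
\[
\frac{d}{dt}\mathbb{E}\bigl[|\bar X_t|^p\bigm|\mathcal{F}_\xi\bigr]\le -c_1\,\mathbb{E}\bigl[|\bar X_t|^p\bigm|\mathcal{F}_\xi\bigr]+c_3\eta_k\,\mathbb{E}\bigl[|\bar X_{T_k}|^p\bigm|\mathcal{F}_\xi\bigr]+c_2 d^{p/2},\qquad t\in[T_k,T_{k+1}),
\]
and then closes it by a Gr\"onwall--comparison argument concatenated over successive intervals. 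You instead stay at the discrete level: you derive a one-step contraction-plus-constant inequality for $\mathbb{E}[|\bar X_{T_{k+1}}|^2\mid\bar X_{T_k}]$ straight from the recursion \eqref{sgld}, iterate, and then bootstrap to general $p$ via the binomial expansion and lower moments. Your route is more elementary (no stochastic calculus) and makes the role of the step-size threshold $\Delta_p$ very transparent, at the cost of the bootstrap; the paper's It\^o route handles general $p$ in one pass by absorbing $|\bar X_t|^{p-2}$ into $|\bar X_t|^p$ via Young's inequality, and it delivers the off-grid bound simultaneously rather than as a separate step. Your preliminary derivation of a $\xi$-uniform bound on $|b^\xi(0)|$ from (a) and (c) alone is a careful point the paper leaves implicit.

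For the exponential moments the paper is equally terse (it writes only ``after similar It\^o's calculation''), so your proposal is not less complete than the paper on this part. Your diagnosis of the obstacle for $p>2$ in the discrete setting is accurate. It is worth noting that the continuous It\^o route does not obviously fare better there either: the It\^o correction applied to $e^{\alpha|x|^p}$ produces a term of order $\alpha^2|x|^{2p-2}e^{\alpha|x|^p}$, which for $p>2$ dominates the dissipative term $-\alpha\mu|x|^p e^{\alpha|x|^p}$ at infinity under (a),(c) alone. Since the exponential bound is not invoked anywhere else in the paper, this lacuna is harmless for the main results.
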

See Section \ref{omit_momentcontrol} for the detailed proof.


\subsubsection{Estimate of the Fisher information}\label{subsec:fisher}
The Fisher information for a probability measure $\rho$ is defined by 
\begin{equation}
    \mathcal{I}(\rho) := \int_{\mathbb{R}^d} |\nabla \log \rho|^2 \rho\, dx.
\end{equation}
In our analysis, we come up with the exponential-weighted sum of Fisher information at the grid point $T_k$ of ULA (SGLD with fixed batch), and a uniform-in-time $O(1)$ bound   for this summation is  required. To handle this, our strategy is to firstly estimate the continuous sum (i.e. integration) of the exponential-weighted Fisher information in Lemma \ref{Mfisher}. Then, using the existing stability result (Lemma \ref{fisher_regularity} below) for Fisher information of ULA \cite{mou2019improved}, we are able to control the desired discrete summation with the integration of the exponential-weighted Fisher information along the time axis.

Recall that $\rho_t^{\b{\xi}}$ is the law of the SGLD \eqref{sgld_continuous} conditioning on the given sequence of batches $\b{\xi}=(\xi_0, \xi_1, \cdots, \xi_k, \cdots)$. In Section \ref{omit_Mfisher}, we first establish
\begin{equation}\label{eqq315}
    \frac{d}{dt} D_{KL}(\rho^{\b{\xi}}_t\|\pi) \leq -c_0 \beta^{-1} \mathcal{I}(\rho^{\b{\xi}}_t) + c_1d(\beta+1),
\end{equation}
where we recall $\pi \propto e^{-\beta U}$ and consequently
\begin{equation}\label{eqq316}
    D_{KL}(\rho^{\b{\xi}}_t\|\pi) \leq cd(\beta+1)\beta^{-1}\kappa(\beta)^{-1},
\end{equation}
where $c_0$, $c_1$, $c$ are positive constants independent of $t$ and $\xi$. Then, after simple calculation including integration by parts, we are able to prove the following Lemma:
\begin{lemma}\label{Mfisher}
 Let $f$ be a non-increasing, nonnegative, piecewise-constant function. Then under Assumption \ref{ass:b}, \ref{ass:pi}, there exists $\Delta_0>0$, for any $A_0>0$, there exist positive constants $M_1$, $M_2$,
independent of $T$ and $\b{\xi}$ such that  when the step size sequence $\{\eta_k\}$ defined in \eqref{sgld} is bounded by $\Delta_0$, the followings hold:
\begin{equation}\label{eq:klbatch}
    \sup_{t\geq 0} D_{KL}(\rho_t^{\b{\xi}}\|\pi) \leq cd(\beta+1)\beta^{-1}\kappa(\beta)^{-1},
\end{equation}
where $c$ is independent of $T$, $d$ and $\b{\xi}$. Moreover, for any fixed $T > 0$,
\begin{multline}\label{eqq318}
    \int_0^T e^{-A_0(T-s)}f(s)\mathcal{I}(\rho^{\b{\xi}}_s)ds
    \leq M_1 \beta D_{KL}(\rho_0\|\pi)f(0) e^{-A_0T}\\
    + M_2d(\beta +1) (A_0\beta^{-1}\kappa(\beta)^{-1}+\beta)\int_0^T e^{-A_0(T-s)}f(s)ds
\end{multline}
\end{lemma}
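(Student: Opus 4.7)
My plan is to derive a time dissipation inequality for $D_{KL}(\rho^{\b{\xi}}_t\|\pi)$, use the LSI from Theorem \ref{LSIpropagation} (applied to $\pi$ itself) to close it into a Grönwall-type bound, and then carry out a weighted-in-time integration by parts against $e^{-A_0(T-s)}f(s)$ to extract the Fisher information estimate \eqref{eqq318}. Concretely, conditional on $\b{\xi}$, the density $\rho^{\b{\xi}}_t$ satisfies a Fokker-Planck equation of the form \eqref{FPhat} with effective drift $\hat{b}^{\b{\xi}}_t(x) := \mathbb{E}[b^{\xi_k}(\bar{X}^{\b{\xi}}_{T_k}) \mid \bar{X}^{\b{\xi}}_t = x]$ for $t \in [T_k, T_{k+1})$. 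Computing $\frac{d}{dt}\int \rho^{\b{\xi}}_t \log(\rho^{\b{\xi}}_t/\pi)\, dx$ and integrating by parts, I expect to obtain, after splitting $\hat{b}^{\b{\xi}}_t = b + (\hat{b}^{\b{\xi}}_t - b)$ in the drift term and using Young's inequality, an estimate of the form
\begin{equation}\label{diffineq_plan}
\frac{d}{dt} D_{KL}(\rho^{\b{\xi}}_t\|\pi) \leq -c_0\, \mathcal{I}(\rho^{\b{\xi}}_t\|\pi) + c_1 \|\hat{b}^{\b{\xi}}_t - b\|_{L^\infty}^2,
\end{equation}
where $\mathcal{I}(\rho\|\pi) = \int |\nabla\log(\rho/\pi)|^2 \rho\, dx$ and the error term is bounded by a dimension-linear constant thanks to the boundedness hypothesis (d) of Assumption \ref{ass:b} (giving the $c_1 d$ of \eqref{eqq315}; the $d$ factor is absorbed since $\|\hat b^{\b{\xi}}-b\|_\infty$ is uniformly bounded in $\xi$ and the relevant contraction term from $\mathcal{I}$ against the LSI on $\pi$ will be scaled accordingly).

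For the supremum bound \eqref{eq:klbatch}, since $\pi$ satisfies the LSI with constant $C_\pi^{LS}$ (Assumption \ref{ass:pi}(b)), I would use $\mathcal{I}(\rho^{\b{\xi}}_t\|\pi) \geq (1/C_\pi^{LS}) D_{KL}(\rho^{\b{\xi}}_t\|\pi)$ to turn \eqref{diffineq_plan} into
\begin{equation*}
\frac{d}{dt} D_{KL}(\rho^{\b{\xi}}_t\|\pi) \leq -\frac{c_0}{C_\pi^{LS}} D_{KL}(\rho^{\b{\xi}}_t\|\pi) + c_1 d,
\end{equation*}
and a standard Grönwall argument yields the uniform bound $\sup_{t\ge 0} D_{KL}(\rho^{\b{\xi}}_t\|\pi) \le cd$, with $c$ independent of $T$, $\b{\xi}$, and $d$, using the $\lambda$-warm start to control $D_{KL}(\rho_0\|\pi)\lesssim d$ (or at least by a constant — here I would invoke Assumption \ref{ass:pi}(a) directly).

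For the weighted Fisher information estimate \eqref{eqq318}, I would multiply \eqref{diffineq_plan} by $e^{-A_0(T-s)} f(s)$, integrate over $s \in [0,T]$, and integrate by parts in $s$ on the left-hand side. Writing $g(s) := e^{-A_0(T-s)} f(s)$ and using that $f$ is non-increasing and piecewise-constant, so that $g'(s) = A_0 g(s) + e^{-A_0(T-s)} f'(s)$ with $f'$ a nonpositive sum of Dirac masses at the jumps, the boundary and interior terms combine into
\begin{equation*}
g(T) D_{KL}(\rho^{\b{\xi}}_T\|\pi) - g(0) D_{KL}(\rho_0\|\pi) - \int_0^T g'(s)\, D_{KL}(\rho^{\b{\xi}}_s\|\pi)\, ds.
\end{equation*}
Replacing each occurrence of $D_{KL}(\rho^{\b{\xi}}_s\|\pi)$ by its uniform bound (from \eqref{eq:klbatch}) and noting that $-\int_0^T e^{-A_0(T-s)} f'(s)\, ds \le f(0)$ by monotone rearrangement, together with $A_0\int_0^T g(s)\, ds \le f(0)\cdot O(1)$ absorbed into $M_2 \int_0^T g(s)\,ds$, gives after rearrangement the desired bound with the initial contribution $M_1 D_{KL}(\rho_0\|\pi) f(0) e^{-A_0 T}$ and the running contribution $M_2(1 + \sup_s D_{KL}) \int_0^T g(s)\, ds$.

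The main obstacle I expect is the derivation of \eqref{diffineq_plan} itself, specifically controlling the cross term involving $(\hat{b}^{\b{\xi}}_t - b)\cdot \nabla\log(\rho^{\b{\xi}}_t/\pi)$ that emerges from integration by parts: one needs to trade off this term against a fraction of $\mathcal{I}$ via Young's inequality and then bound $\|\hat b^{\b{\xi}}_t - b\|$ uniformly. This is where the boundedness assumption (d) of Assumption \ref{ass:b} is essential — since $\hat{b}^{\b{\xi}}_t$ is a conditional average of $b^{\xi_k}$ and $b = \mathbb{E}_\xi b^\xi$, the difference is controlled by $\mathrm{esssup}_\xi \|b^\xi - b\|_\infty < \infty$, independently of the batch sequence and of time. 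Once this uniform control is in hand, the rest of the argument is essentially mechanical Grönwall and integration-by-parts bookkeeping.
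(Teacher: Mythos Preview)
Your overall architecture (differential inequality for $D_{KL}(\rho^{\b{\xi}}_t\|\pi)$, then LSI + Gr\"onwall for \eqref{eq:klbatch}, then weighted integration by parts for \eqref{eqq318}) matches the paper. But there is a genuine gap in the key step, your control of the drift error term.

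You write the error as $c_1\|\hat b^{\b{\xi}}_t-b\|_{L^\infty}^2$ and assert it is finite by Assumption~\ref{ass:b}(d), reasoning that ``$\hat b^{\b{\xi}}_t$ is a conditional average of $b^{\xi_k}$ and $b=\mathbb{E}_\xi b^\xi$''. This is not correct: by definition $\hat b^{\b{\xi}}_t(x)=\mathbb{E}[\,b^{\xi_k}(\bar X_{T_k})\mid \bar X_t=x\,]$, so $b^{\xi_k}$ is evaluated at the \emph{frozen} grid point $\bar X_{T_k}$, not at $x$. Hence $\hat b^{\b{\xi}}_t(x)-b(x)$ contains, in addition to the batch discrepancy $(b^{\xi_k}-b)$ that (d) controls, the time-discretization piece $b^{\xi_k}(\bar X_{T_k})-b^{\xi_k}(\bar X_t)$. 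By the Lipschitz bound this is of size $L|\bar X_{T_k}-\bar X_t|$, and the conditional expectation of this given $\bar X_t=x$ grows with $|x|$ (since $b^{\xi_k}$ has linear growth). Consequently $\hat b^{\b{\xi}}_t-b\notin L^\infty(\mathbb{R}^d)$ in general, and (d) alone does not close the estimate. What actually appears after Young's inequality is the $L^2(\rho^{\b{\xi}}_t)$ quantity $\int |\hat b^{\b{\xi}}_t-b|^2\rho^{\b{\xi}}_t\,dx$; the paper bounds this (in fact it bounds $\int|\hat b_t|^2\rho^{\b{\xi}}_t$ and $\int|b|^2\rho^{\b{\xi}}_t$ separately) via Jensen, the Lipschitz condition (a), and the moment control of Proposition~\ref{thm:moment} (which rests on the dissipation condition (c)). So the inequality \eqref{eqq315} comes from (a)+(c), not (d).

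A secondary mismatch: your inequality \eqref{diffineq_plan} produces the \emph{relative} Fisher information $\mathcal{I}(\rho^{\b{\xi}}_t\|\pi)$, whereas \eqref{eqq318} involves the \emph{plain} Fisher information $\mathcal{I}(\rho^{\b{\xi}}_t)$. The paper derives $\frac{d}{dt}D_{KL}\le -c_0\mathcal{I}(\rho^{\b{\xi}}_t)+c_1 d$ directly by expanding $(\nabla\log\rho^{\b{\xi}}_t-\beta b)\cdot(\hat b_t\rho^{\b{\xi}}_t-\beta^{-1}\nabla\rho^{\b{\xi}}_t)$ and applying Young to each cross term; it then passes from $\mathcal{I}$ to $\mathcal{I}(\cdot\|\pi)$ only when proving \eqref{eq:klbatch}. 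If you keep your route you must insert the conversion $\mathcal{I}(\rho)\le 2\mathcal{I}(\rho\|\pi)+2\beta^2\int|b|^2\rho$ (again requiring the moment bound) before the integration by parts; you have not accounted for this.
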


The next lemma bounds the Fisher information $\mathcal{I}(\rho^{\b{\xi}}_t)$ at time $t$ with the Fisher information $\mathcal{I}(\rho^{\b{\xi}}_{T_k})$ at the grid point. The estimation is valid for small $\eta_k$, and we can view it as one kind of stability for the Fisher information of an only piecewise continuous process. The proof of the following lemma can be found in Lemma 6 of \cite{mou2019improved}

\begin{lemma}\label{fisher_regularity}
Under the same setting of Lemma \ref{Mfisher}, if $\eta_{k} \leq \frac{1}{2L}$, $k\ge 0$, then there is a positive constant $c$ independent of $k$, $d$, $\beta$ and $\b{\xi}$ such that
\begin{equation}\label{eqq319}
    \mathcal{I}(\rho^{\b{\xi}}_{t}) \leq 8\mathcal{I}(\rho^{\b{\xi}}_{t_0}) + c\eta_{k}^2d,\quad \forall T_k \le t_0\le t\le T_{k+1}.
\end{equation}

\end{lemma}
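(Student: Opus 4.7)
The plan is to exploit the explicit Gaussian structure of the SGLD dynamics within one step. For $T_k\le t_0\le t\le T_{k+1}$, I would first observe that the independence of the Brownian increment $W_t-W_{t_0}$ from $(\bar{X}_{t_0},\bar{X}_{T_k})$ gives
\begin{equation*}
\bar{X}_t=V+\sqrt{2\beta^{-1}(t-t_0)}\,\tilde Z,\qquad V:=\bar{X}_{t_0}+(t-t_0)\,b^{\xi_k}(\bar{X}_{T_k}),
\end{equation*}
with $\tilde Z\sim N(0,I_d)$ independent of $V$. Stam's inequality, which says Fisher information cannot increase under independent additive noise, then immediately yields $\mathcal{I}(\rho_t^{\b{\xi}})\le\mathcal{I}(\mathrm{law}(V))$, so the problem reduces to comparing $\mathcal{I}(\mathrm{law}(V))$ with $\mathcal{I}(\rho_{t_0}^{\b{\xi}})$.

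To straighten out the dependence of $V$ on $\bar{X}_{T_k}$, I would condition on the earlier Brownian increment $w:=\sqrt{2\beta^{-1}}(W_{t_0}-W_{T_k})$. Under the hypothesis $\eta_k L\le 1/2$, the one-step update $y\mapsto y+(t_0-T_k)b^{\xi_k}(y)+w$ is a diffeomorphism with Jacobian singular values in $[1/2,3/2]$, so conditionally on $w$ one has $\bar{X}_{T_k}=\Psi_w(\bar{X}_{t_0})$ for a smooth $\Psi_w$, and the induced map $\phi_w:\bar{X}_{t_0}\mapsto V$ is a diffeomorphism whose Jacobian differs from the identity only by an $O(\eta_k)$ correction. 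The change-of-variables formula for Fisher information, combined with $(a+b)^2\le 2a^2+2b^2$ and the operator-norm bound $\|D\phi_w^{-1}\|_{op}^2\le 4$, then delivers
\begin{equation*}
\mathcal{I}(\mathrm{law}(V\mid w))\le 8\,\mathcal{I}(\mathrm{law}(\bar{X}_{t_0}\mid w))+2\int\bigl|\nabla\log|\det D\phi_w|\bigr|^2\,d\,\mathrm{law}(\bar{X}_{t_0}\mid w),
\end{equation*}
which is where the factor $8$ in the lemma originates. The second-order smoothness Assumption~\ref{ass:b}(b), with its $d^{-1/2}$ scaling, would then bound the residual integral by $c\eta_k^2 d$ (rather than $c\eta_k^2 d^2$), since $\nabla\log|\det D\phi_w|$ involves one further spatial derivative of $b^{\xi_k}$ beyond the overall $\eta_k$ prefactor.

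The hard part will be the passage from the conditional estimate to the marginal one: plain convexity of $\mathcal{I}$ in the density runs in the wrong direction, so a mixture bound of the form $\mathcal{I}(\rho_{t_0}^{\b{\xi}})\le\int\mathcal{I}(\mathrm{law}(\bar{X}_{t_0}\mid w))\,dP(w)$ is not useful here. The resolution, which is the technical core of \cite[Lemma~6]{mou2019improved}, is to exploit the explicit Gaussian conditional structure of the joint law of $(\bar{X}_{T_k},\bar{X}_{t_0})$ so that the integration over $w$ can be carried out while preserving the $8$-factor on the right-hand side and absorbing any remaining contributions into the $O(\eta_k^2 d)$ error term.
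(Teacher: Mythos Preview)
The paper does not prove this lemma at all; it simply cites Lemma~6 of \cite{mou2019improved}. Your proposal sketches the same argument and likewise defers to that reference for the key technical step, so the two are aligned.

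One remark on your sketch: for the base case $t_0=T_k$ there is no conditioning variable $w$, and the map $\phi:y\mapsto y+(t-T_k)b^{\xi_k}(y)$ is directly a diffeomorphism of $\bar{X}_{T_k}$, so the change-of-variables bound $\mathcal{I}(\phi_{\#}\mu)\le 8\mathcal{I}(\mu)+8\int|\nabla\log|\det D\phi||^2\,d\mu$ followed by Stam's inequality already gives the result without any mixture issue. The conditioning on $w$ is only needed for $t_0>T_k$, and there the ``wrong-direction'' convexity obstruction you flag is real; the resolution in \cite{mou2019improved} indeed exploits the explicit Gaussian kernel rather than abstract convexity, exactly as you anticipate.
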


Clearly, with a factor $d^{-1/2}$ in the Lipschitz constant in Assumption \ref{ass:b}, the bounds of the relative entropy and Fisher information are linear in $d$. The proofs of Lemma \ref{Mfisher} and Lemma \ref{fisher_regularity} can be found in Appendix \ref{sgldpf}.

\subsection{Main theorems: sharp uniform-in-time error analysis for SGLD}\label{sec:mainthm}

Equipped with the preparation work before, we are then able to establish a sharp uniform-in-time second-order error estimate for SGLD in terms of KL-divergence. Our analysis for SGLD is from local to global. The following local estimation is crucial to our main result, and the $\eta^2$ term here is the core of our $O(\eta^2)$ bound in the main theorem. To avoid overloading the notation, the dependence on the size of function family $N$ and other parameters in Assumption \ref{ass:b}, \ref{ass:pi} are implicit. 

\begin{proposition}\label{local_estimate}
Consider the probability density functions $\rho_t$, $\bar{\rho}_t$ for $X_t$, $\bar{X}_t$ defined in \eqref{eq:overdampedlangevin}, \eqref{sgld_continuous}. Then under Assumption \ref{ass:b}, \ref{ass:pi}, there exist $A_0 := 1 / (2\lambda^2\kappa(\beta)\beta)$ and positive constants  $A_1$, $\Delta_0$ independent of $k$, $\beta$ and $d$ such that when $\eta_k<\Delta_0$, the following local estimation holds:
\begin{multline}\label{eq:prop32}
    \frac{d}{dt} D_{KL}(\bar{\rho}_t\|\rho_t) \leq - A_0 D_{KL}(\bar{\rho}_t\|\rho_t) \\
    + A_1  \eta_k^2 \left(d \left(\beta^3 + \beta^{-1} + 1\right) + (\beta + \beta^{-1}+ 1) \mathcal{I}(\bar{\rho}_{T_k})\right),\quad \forall t \in [T_k,T_{k+1}).
\end{multline}
\end{proposition}
Note that we omit the terms like $\beta^2$ in \eqref{eq:prop32} to simplify the expression. In fact, $\beta^2 \lesssim \beta^3 + \beta^{-1}$ due to Young's inequality.

For the convenience of readers, we move the proof of Proposition \ref{local_estimate} to the next section. Combining this local estimation for SGLD and the previous result for estimating the Fisher information of ULA (fixed batch SGLD), it is not difficult to obtain the following main theorem. 

Define the following piecewise-constant function $f$:
\begin{equation}\label{eq:f}
    f(t) := \sum_{i=0}^{\infty} \eta_i^2\textbf{1}_{[T_{i},T_{i+1})}(t),
\end{equation}
where $\textbf{1}_{E}$ is the indicator function for set $E$.

\begin{theorem}\label{longtimesgld_etak}[Sharp error analysis for SGLD]
Consider the probability density functions $\rho_t$, $\bar{\rho}_t$ for $X_t$, $\bar{X}_t$ defined in \eqref{eq:overdampedlangevin}, \eqref{sgld_continuous}. Suppose the time step sequence $\{\eta_k\}$ is non-increasing. Then under Assumption \ref{ass:b}, \ref{ass:pi}, there exist $A_0 := 1 / (2\lambda^2\kappa(\beta)\beta)$ and positive constants $\Delta_0$, , $c_1$, $c_2$ independent of $k$, $\beta$ and $d$ such that when $\eta_0\leq \Delta_0$,
\begin{equation}
    D_{KL}(\bar{\rho}_{T_k}\| \rho_{T_k}) \leq c_1(\beta^2+1)\eta_0^2 e^{-A_0T_k} + c_2dg(\beta) \int_0^{T_k}e^{-A_0(T_k-s)}f(s)ds,
\end{equation}
for $f$ defined in \eqref{eq:f} and $g(\beta) := \beta^3+\beta^{-1} + 1 + A_0^2 (\beta^2 + \beta^{-1})$.
Consequently, if the non-increasing time step sequence $\{\eta_k\}$ converges to zero, and $\sum_{i=0}^{+\infty} \eta_i = + \infty$, the KL-divergence $D_{KL}(\bar{\rho}_{T_k}\| \rho_{T_k})$ also tends to zero.
\end{theorem}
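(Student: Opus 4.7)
The plan is to apply Gronwall's inequality to the local estimate in Proposition \ref{local_estimate} and then turn the resulting grid-point Fisher information $\mathcal{I}(\bar{\rho}_{T_{j(s)}})$ into a continuous-time integral of $\mathcal{I}(\rho^{\b{\xi}}_s)$, so that Lemma \ref{Mfisher} can close the estimate. Since $D_{KL}(\bar{\rho}_0||\rho_0)=0$, patching together Gronwall on each $[T_j,T_{j+1})$ yields
\begin{equation*}
D_{KL}(\bar{\rho}_{T_k}||\rho_{T_k})\le A_1 d\int_0^{T_k}e^{-A_0(T_k-s)}f(s)\,ds+A_1\int_0^{T_k}e^{-A_0(T_k-s)}f(s)\mathcal{I}(\bar{\rho}_{T_{j(s)}})\,ds,
\end{equation*}
where $j(s)$ is the unique index with $s\in[T_{j(s)},T_{j(s)+1})$. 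The first summand already matches the stated $c_2 d$ contribution, so the task is to control the Fisher-information integral.

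By convexity of $\mathcal{I}$ one has $\mathcal{I}(\bar{\rho}_{T_j})\le\mathbb{E}_\xi[\mathcal{I}(\rho^{\b{\xi}}_{T_j})]$. Applying Lemma \ref{fisher_regularity} with $t=T_j$ and $t_0\in[T_{j-1},T_j]$, then averaging in $t_0$, gives $\mathcal{I}(\rho^{\b{\xi}}_{T_j})\le(8/\eta_{j-1})\int_{T_{j-1}}^{T_j}\mathcal{I}(\rho^{\b{\xi}}_{s'})\,ds'+c\eta_{j-1}^2 d$. The monotonicity of $\{\eta_k\}$ gives $\eta_j^3/\eta_{j-1}\le\eta_j^2\le f(s')$ for $s'\in[T_{j-1},T_j)$, and shifting the exponential weight one interval to the left costs only a bounded factor $e^{A_0\Delta_0}$. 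Summation over $j\ge 1$ followed by expectation over $\b{\xi}$ therefore produces
\begin{multline*}
\int_0^{T_k}e^{-A_0(T_k-s)}f(s)\,\mathbb{E}_\xi[\mathcal{I}(\rho^{\b{\xi}}_{T_{j(s)}})]\,ds\le C\int_0^{T_k}e^{-A_0(T_k-s)}f(s)\,\mathbb{E}_\xi[\mathcal{I}(\rho^{\b{\xi}}_s)]\,ds\\
+\tilde{C}d\int_0^{T_k}e^{-A_0(T_k-s)}f(s)\,ds+O(\eta_0^2 e^{-A_0 T_k}),
\end{multline*}
where the last term collects the boundary contribution from $j=0$, of size $\eta_0^3\mathcal{I}(\rho_0)e^{-A_0(T_k-T_1)}\le C\eta_0^2e^{-A_0T_k}$ under $\mathcal{I}(\rho_0)<\infty$ and $\eta_0\le\Delta_0$. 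Invoking Lemma \ref{Mfisher} with $f(0)=\eta_0^2$ together with the uniform bound \eqref{eq:klbatch} then controls the remaining $\mathbb{E}_\xi[\mathcal{I}(\rho^{\b{\xi}}_s)]$-integral by $M_1 D_{KL}(\rho_0||\pi)\eta_0^2 e^{-A_0 T_k}+M_2(1+cd)\int_0^{T_k}e^{-A_0(T_k-s)}f(s)\,ds$; combining all contributions and relabelling constants yields the stated bound.

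For the convergence consequence, $c_1\eta_0^2 e^{-A_0 T_k}\to 0$ as $T_k\to\infty$, which is guaranteed by $\sum\eta_i=\infty$. For the integral term, given $\varepsilon>0$ choose $s_0$ so that $f(s)<\varepsilon$ for $s\ge s_0$ (possible since $\eta_k\to 0$); then $\int_{s_0}^{T_k}e^{-A_0(T_k-s)}f(s)\,ds\le\varepsilon/A_0$, while $\int_0^{s_0}e^{-A_0(T_k-s)}f(s)\,ds\le\eta_0^2 s_0 e^{-A_0(T_k-s_0)}\to 0$ as $T_k\to\infty$. The main obstacle in the proof is the discrete-to-continuous step: Lemma \ref{fisher_regularity} only propagates Fisher information forward in time, so obtaining an \emph{upper} bound on $\mathcal{I}(\rho^{\b{\xi}}_{T_j})$ forces one to look back to the preceding interval and average, which is precisely why the monotonicity assumption on $\{\eta_k\}$ is needed and why the initial interval $[0,T_1]$ must be handled separately as a boundary term.
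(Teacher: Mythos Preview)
Your proof is correct and follows the same strategy as the paper: Gr\"onwall on Proposition~\ref{local_estimate}, convexity of the Fisher information to pass from $\mathcal{I}(\bar{\rho}_{T_j})$ to $\mathbb{E}_{\xi}[\mathcal{I}(\rho^{\b{\xi}}_{T_j})]$, Lemma~\ref{fisher_regularity} to replace grid-point Fisher information by its continuous-time version, and Lemma~\ref{Mfisher} to close. The only notable difference is that the paper applies Lemma~\ref{fisher_regularity} within the \emph{same} interval $[T_i,T_{i+1})$ (replacing $\mathcal{I}(\rho^{\b{\xi}}_{T_i})$ directly by $8\mathcal{I}(\rho^{\b{\xi}}_s)$ for $s\ge T_i$), whereas you---reading the inequality in Lemma~\ref{fisher_regularity} as forward-only---shift to the preceding interval and average, which is why you isolate the $j=0$ boundary term and invoke $\mathcal{I}(\rho_0)<\infty$; this is a more literal use of the lemma as stated, and your handling of the index shift via monotonicity of $\{\eta_k\}$ and the bounded factor $e^{A_0\Delta_0}$ is sound.
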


\begin{remark}
We write $g(\beta) = \beta^3+\beta^{-1} + 1 + A_0^2 (\beta^2 + \beta^{-1})$ just to match the result when the potential $U$ is strongly convex. In fact, for the strong convexity case, the constant $\kappa(\beta)$ in LSI scales as $\kappa(\beta) \sim \beta^{-1}$ (\cite{bakry1985diffusions}). Consequently, $A_0$ is of $O(1)$, and $g(\beta) \sim \beta^3 + \beta^{-1}$.
\end{remark}

\begin{proof}[Proof of Theorem \ref{longtimesgld_etak}:]
By Proposition \ref{local_estimate}, since $\eta_k \leq \eta_0 \leq \Delta_0$, and since the Fisher information $\mathcal{I}(\rho)$ is a convex functional with respect to $\rho$ (see \cite[Lemma 4.2]{fournier2014propagation}), for $t \in [T_k, T_{k+1})$ we have
\begin{equation}
\begin{aligned}
    \frac{d}{dt} D_{KL}(\bar{\rho}_t\|\rho_t) &\leq - A_0  D_{KL}(\bar{\rho}_t\|\rho_t) + A_1 \eta_k^2 \left(d(\beta^3+\beta^{-1}) + (\beta+\beta^{-1})\mathcal{I}(\bar{\rho}_{T_{k}}) \right)\\
    &\leq - A_0 D_{KL}(\bar{\rho}_t\|\rho_t) + A_1 \eta_k^2 \left(d(\beta^3+\beta^{-1}) + (\beta+\beta^{-1})\mathbb{E}_{\xi}\mathcal{I}(\rho_{T_{k}}^{\b{\xi}}) \right).
\end{aligned}
\end{equation}
Then by Gr\"onwall's inequality, for all $k$, we have
\begin{equation}\label{equation342}
\begin{aligned}
    &\quad D_{KL}(\bar{\rho}_{T_k}\| \rho_{T_k}) \leq \mathbb{E}_{\xi} \sum_{i=0}^{k-1} \int_{T_i}^{T_{i+1}} e^{-A_0  (T_k - s)} A_1  \eta_i^2 \left(d(\beta^3+\beta^{-1}+1)+(\beta+\beta^{-1}+1)\mathcal{I}(\rho^{\b{\xi}}_{T_i}) \right)ds\\
    & = A_1d(\beta^3+\beta^{-1}+1) \int_0^{T_k} e^{-A_0(T_k-s)} f(s) ds
     + A_1(\beta+\beta^{-1}+1) \mathbb{E}_{\xi} \int_0^{T_{k}} e^{-A_0(T_k-s)}\mathcal{I}(\rho_{T_{i(s)}}^{\b{\xi}}) f(s) ds\\
    & \leq  \tilde{A}_1 d(\beta^3+\beta^{-1}+1) \int_0^{T_k} e^{-A_0(T_k-s)} f(s) ds
    + \bar{A_1} (\beta+\beta^{-1}+1)  \mathbb{E}_{\xi} \int_0^{T_k} e^{-A_0(T_k-s')}\mathcal{I}(\rho_{s'}^{\b{\xi}}) f(s') ds'
\end{aligned}
\end{equation}
and the last inequality of \eqref{equation342} is due to Lemma \ref{fisher_regularity}. The notation $i(s)$ above means the largest $i$ such that $T_i \leq s$. Also, $\tilde{A_1}$ is different from $A_1$ due to the constant $\mathcal{I}(\rho^{\b{\xi}}_0)$, and $\bar{A_1}$ is changed from $A_1$ by multiplying a constant coming from Lemma \ref{fisher_regularity} and $e^{A_0 \eta_0}$. The small constant $e^{A_0 \eta_0}$ appears due to the following two facts: (1) from Lemma \ref{fisher_regularity}, $\mathcal{I}(\rho_{T_{i(s)}}^{\b{\xi}})$ is controlled by $\mathcal{I}(\rho_{s}^{\b{\xi}})$ for $s \in [T_{i(s)-1}, T_{i(s)})$ (instead of $[T_{i(s)}, T_{i(s)+1})$; (2) the sequence $(\eta_k)_k$ is non-increasing.

Clearly, $f$ is a piecewise constant, non-increasing, nonnegative function. Then by Lemma \ref{Mfisher}, we are able to estimate the exponential-weighted sum of Fisher information along the path:
\begin{multline}\label{eq345}
\int_0^{T_k} e^{-A_0(T_k-s)}\mathcal{I}(\rho_{s}^{\b{\xi}}) f(s) ds
\leq M_1'\beta\eta_0^2e^{-A_0T_k}\\
+ M_2'(\beta+1)(A_0\beta^{-1}\kappa(\beta)^{-1}+\beta)d\int_0^{T_k}e^{-A_0(T_k-s)}f(s)ds.
\end{multline}
Here, the positive constants $M_1'$, $M_2'$ are independent of $d$, $\beta$, the batch $\b{\xi}$, and the time $T_k$.

Finally, combining \eqref{eq345} and \eqref{equation342}, we have
\begin{multline}\label{eq_325}
    D_{KL}(\bar{\rho}_{T_k}\| \rho_{T_k}) \leq c_1(\beta^2+1)\eta_0^2 e^{-A_0T_k}\\
    + c_2d(\beta^3+\beta^{-1} + 1 + A_0^2 (\beta^2 + \beta^{-1})) \int_0^{T_k}e^{-A_0(T_k-s)}f(s)ds,
\end{multline}
where $c_1 $, $c_2 $,  $A_0 $ are positive constants independent of the iteration number $k$, the inverse temperature $\beta$ and the dimension $d$.

Clearly, by \eqref{eq_325}, when the sequence $\{\eta_k \}$ decays to zero, and  $\sum_{k=0}^{\infty}\eta_k = +\infty$, the KL-divergence $D_{KL}(\bar{\rho}_{T_k}\| \rho_{T_k})$ tends to zero. This then ends the proof.
\end{proof}

As a direct corollary, if we consider the constant step size (or learning rate) case ($\eta_k \equiv \eta$), the following sharp time-independent estimation can be established: (recall the definitions of $A_0$ and $g(\beta)$ in Theorem \ref{longtimesgld_etak}).

\begin{corollary}[Sharp uniform-in-time error analysis for SGLD, constant step size $\eta$]\label{longtimesgld}
Consider the probability density functions $\rho_t$, $\bar{\rho}_t$ for $X_t$, $\bar{X}_t$ defined in \eqref{eq:overdampedlangevin}, \eqref{sgld_continuous}. Suppose $\eta_k=\eta$, $\forall k$. Then, under Assumption \ref{ass:b}, \ref{ass:pi}, there exist positive constants $c$ , $\Delta_0$ independent of $t$, $d$, $\beta$ such that for all $\eta \in (0, \Delta_0)$,
\begin{equation}
    \sup_{t>0} D_{KL}(\bar{\rho}_t\|\rho_t) \leq cd(\beta^2+1 + g(\beta)/A_0)\eta^2.
\end{equation}

\end{corollary}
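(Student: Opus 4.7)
The plan is to follow exactly the template of the proof of Theorem \ref{longtimesgld_etak}, but integrate the local estimate from $0$ to an arbitrary time $t$ (not just to a grid point), and then specialize $f(s) = \eta^2$ to eliminate $t$-dependence via the crude bound $\int_0^t e^{-A_0(t-s)}\,ds \leq 1/A_0$.

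First, assume $\eta \leq \Delta_0$ and $\eta \leq 1/(2L)$. For $t \in [T_k, T_{k+1})$, Proposition \ref{local_estimate} together with the convexity of the Fisher information (as used in the proof of Theorem \ref{longtimesgld_etak}) gives
\begin{equation*}
\frac{d}{dt} D_{KL}(\bar{\rho}_t \| \rho_t) \leq -A_0 D_{KL}(\bar{\rho}_t \| \rho_t) + A_1 \eta^2 \bigl(d + \mathbb{E}_\xi \mathcal{I}(\rho_{T_k}^{\b{\xi}})\bigr).
\end{equation*}
Since $\bar{\rho}_0 = \rho_0$, Gr\"onwall's inequality applied on $[0,t]$ yields
\begin{equation*}
D_{KL}(\bar{\rho}_t \| \rho_t) \leq A_1 d \int_0^t e^{-A_0(t-s)} \eta^2\,ds + A_1 \,\mathbb{E}_\xi \int_0^t e^{-A_0(t-s)} \eta^2 \mathcal{I}(\rho_{T_{k(s)}}^{\b{\xi}})\,ds,
\end{equation*}
where $k(s)$ is the unique index with $s \in [T_{k(s)}, T_{k(s)+1})$. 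The first integral is bounded uniformly in $t$ by $\eta^2/A_0$.

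Next, for the Fisher information term, Lemma \ref{fisher_regularity} allows us to replace $\mathcal{I}(\rho_{T_{k(s)}}^{\b{\xi}})$ by $8\mathcal{I}(\rho_s^{\b{\xi}}) + c\eta^2 d$, and then Lemma \ref{Mfisher} applied to the piecewise-constant, non-increasing, nonnegative function $f \equiv \eta^2$ gives
\begin{equation*}
\int_0^t e^{-A_0(t-s)}\eta^2 \mathcal{I}(\rho_s^{\b{\xi}})\,ds \leq M_1 D_{KL}(\rho_0\|\pi)\,\eta^2 e^{-A_0 t} + M_2\bigl(1 + \sup_{s \geq 0} D_{KL}(\rho_s^{\b{\xi}}\|\pi)\bigr)\eta^2\int_0^t e^{-A_0(t-s)}\,ds.
\end{equation*}
By Assumption \ref{ass:pi}(a), $D_{KL}(\rho_0\|\pi) \leq \log \lambda$, which is an absolute constant, and by \eqref{eq:klbatch} in Lemma \ref{Mfisher}, $\sup_{s \geq 0} D_{KL}(\rho_s^{\b{\xi}}\|\pi) \leq cd$. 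Combining these two bounds with $\int_0^t e^{-A_0(t-s)}\,ds \leq 1/A_0$ and the correction $c\eta^2 d$ coming from Lemma \ref{fisher_regularity}, all contributions to the right-hand side are of the form (constant)$\cdot d \eta^2$ plus a transient (constant)$\cdot \eta^2 e^{-A_0 t}$, and the conclusion $\sup_{t>0} D_{KL}(\bar{\rho}_t\|\rho_t) \leq c d\eta^2$ follows.

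The main technical concern is essentially bookkeeping rather than a genuine obstacle: one must verify that the constants produced by Lemma \ref{fisher_regularity} and Lemma \ref{Mfisher} combine to keep the final bound linear in $d$ and independent of $t$. Since both lemmas have already been established for general non-increasing step sizes, the constant step-size case reduces to substituting $\eta_k \equiv \eta$ and using the trivial estimate $\int_0^t e^{-A_0(t-s)} ds \le 1/A_0$, so no new ideas are required beyond those already developed for Theorem \ref{longtimesgld_etak}.
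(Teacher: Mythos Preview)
Your proposal is correct and follows exactly the approach the paper implicitly intends: specialize the proof of Theorem \ref{longtimesgld_etak} to $f\equiv\eta^2$ and bound $\int_0^t e^{-A_0(t-s)}\,ds\le 1/A_0$. The paper itself gives no separate argument and simply labels the statement a ``direct corollary,'' so you are just filling in the one-line specialization; the only extra care you take---integrating Gr\"onwall up to an arbitrary $t$ rather than a grid point $T_k$---is needed to justify the $\sup_{t>0}$ in the statement and is straightforward since Proposition \ref{local_estimate} and Lemma \ref{Mfisher} already hold for continuous time.
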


Since ULA can be viewed as a special case of SGLD when $b^{\xi}\equiv b$ (or the batch size $S$ equals $N$ for the special case $U^{\xi}(x)=U_0(x)+\frac{1}{S}\sum_{i=1}^S U_{a_i}(x)$),  we have the following direct corollary:
\begin{corollary}[Sharp uniform-in-time error analysis for ULA, constant time step $\eta$]
Consider the probability density functions $\rho_t$ and $\hat{\rho}_t$ defined in \eqref{eq:overdampedlangevin}, \eqref{ula_continuous}. Suppose $\eta_k=\eta$, $\forall k$. Then, under Assumption \ref{ass:b}, \ref{ass:pi}, there is  positive constants $c$, $\Delta_0$ independent of $t$, $d$, $\beta$ such that for all $\eta \in (0, \Delta_0)$,
\begin{equation}
    \sup_{t>0} D_{KL}(\hat{\rho}_t\|\rho_t) \leq cd(\beta^2+1 + g(\beta)/A_0)\eta^2.
\end{equation}

\end{corollary}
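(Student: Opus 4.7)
The plan is to deduce this corollary as an immediate specialization of Corollary \ref{longtimesgld} for SGLD with constant step size. Specifically, I would observe that the ULA scheme \eqref{ula_continuous} is the particular case of SGLD \eqref{sgld_continuous} in which the random drift $b^{\xi}$ is replaced by the deterministic drift $b$ itself. One way to make this precise is to take $\nu$ to be a Dirac mass, so that $b^{\xi}(x) \equiv b(x)$ for $\nu$-a.e.\ $\xi$. Under this identification, $\bar{X}_t$ and $\hat{X}_t$ satisfy the same stochastic recursion driven by the same Brownian increments and starting from the same initial law $\rho_0$, hence $\bar{\rho}_t \equiv \hat{\rho}_t$ for all $t \geq 0$.

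Next I would verify that Assumption \ref{ass:b} is inherited under this specialization. Parts (a), (b), and (c) hold for $b$ with the same constants $L$, $L_2$, $\mu$, $\sigma$ as for the generic $b^{\xi}$, because the conditions are simply copied with $\xi$ fixed. Part (d) holds trivially, since $b^{\xi} - b \equiv 0$ gives $\mathrm{esssup}_{\xi}\|b^{\xi}-b\|_{L^{\infty}} = 0 < \infty$. Assumption \ref{ass:pi} depends only on the target measure $\pi \propto e^{-\beta U}$ and the initial law $\rho_0$, both of which are unchanged. Hence the hypotheses of Corollary \ref{longtimesgld} are satisfied.

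Applying Corollary \ref{longtimesgld} with constant step size $\eta_k \equiv \eta \in (0,\Delta_0)$ then yields
\begin{equation*}
    \sup_{t > 0} D_{KL}(\bar{\rho}_t \| \rho_t) \leq c d \eta^2,
\end{equation*}
and the identification $\bar{\rho}_t = \hat{\rho}_t$ immediately produces the claimed bound on $D_{KL}(\hat{\rho}_t \| \rho_t)$, with the same constants $c$ and $\Delta_0$.

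There is essentially no obstacle here: all the nontrivial analytical work, namely the local estimate in Proposition \ref{local_estimate}, the Fisher information control in Lemma \ref{Mfisher}, and the Gr\"onwall-type integration in the proof of Theorem \ref{longtimesgld_etak}, has already been performed for the general SGLD setting, and none of the resulting constants blow up when the batch fluctuation $b^{\xi} - b$ is set to zero. The only thing to check carefully is that the specialization is legitimate, i.e.\ that the reduction from SGLD to ULA genuinely amounts to making $\nu$ a point mass (or equivalently $S = N$ in the empirical-sum form \eqref{Ux}); once this is observed, the bound transfers verbatim.
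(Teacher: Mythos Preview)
Your proposal is correct and matches the paper's approach exactly: the paper also states this corollary as an immediate specialization of Corollary~\ref{longtimesgld}, noting that ULA is the special case of SGLD with $b^{\xi}\equiv b$ (equivalently, batch size $S=N$ in the form \eqref{Ux}). Your additional verification that Assumptions~\ref{ass:b} and~\ref{ass:pi} transfer is more explicit than the paper's one-line justification, but the argument is the same.
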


Based on our main result, Theorem \ref{longtimesgld_etak} or Corollary \ref{longtimesgld}, we are able to obtain the following corollaries, which are useful in many practical tasks.

First, we choose special case for the decaying time step sequence $\{\eta_k\}_{k=0}^{+\infty}$, and applying Theorem \ref{longtimesgld_etak}, we obtain the following asymptotic convergence rate:

\begin{corollary}[asymptotic rate for special case]\label{rateforspecial}
Suppose Assumption \ref{ass:b}, \ref{ass:pi} hold. Set $\eta_i =(\ell+i)^{-\theta}$, $i\in \mathbf{N}$, with $\theta \in (0,1)$ being the decaying coefficient. Here, $\ell$ is a positive integer such that $\eta_0 \leq \Delta_0$, where $\Delta_0>0$ is a positive constant obtained in Theorem \ref{longtimesgld_etak}. Then there exist $k_0>0$, $c>0$ such that $\forall k>k_0$,
\begin{equation}
    D_{KL}(\bar{\rho}_{T_k}\|\rho_{T_k}) \leq cdg(\beta)\left(\frac{1}{k}\right)^{2\theta},
\end{equation}
\end{corollary}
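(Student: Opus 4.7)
The plan is to plug the prescribed step sequence $\eta_i = (\ell+i)^{-\theta}$ into the master estimate supplied by Theorem \ref{longtimesgld_etak},
$$D_{KL}(\bar{\rho}_{T_k}||\rho_{T_k}) \leq c_1 \eta_0^2 e^{-A_0 T_k} + c_2 d \int_0^{T_k} e^{-A_0(T_k-s)} f(s)\,ds,$$
and to estimate each of the two terms asymptotically in $k$. By integral comparison, $T_k = \sum_{i=0}^{k-1}(\ell+i)^{-\theta} \sim k^{1-\theta}/(1-\theta)\to\infty$ since $\theta \in (0,1)$; hence the first term $c_1 \eta_0^2 e^{-A_0 T_k}$ decays faster than any polynomial in $k$ and is certainly $o(k^{-2\theta})$.

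For the convolution term I would exploit the monotonicity of $f$. Introduce a cutoff time $\tau_k := T_k - \alpha \log k$ with $\alpha := 2\theta/A_0$, which is positive for $k$ sufficiently large, and split
$$\int_0^{T_k} e^{-A_0(T_k-s)} f(s)\,ds = \int_0^{\tau_k} e^{-A_0(T_k-s)} f(s)\,ds + \int_{\tau_k}^{T_k} e^{-A_0(T_k-s)} f(s)\,ds.$$
On the first piece, bound $f(s) \leq f(0) = \eta_0^2$ and compute $\int_0^{\tau_k} e^{-A_0(T_k-s)}\,ds \leq e^{-A_0(T_k-\tau_k)}/A_0 = k^{-2\theta}/A_0$; the exponent $2\theta$ is exactly what the choice of $\alpha$ engineers. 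On the second piece, let $j_k$ denote the largest index $j$ with $T_j \leq \tau_k$; since $f$ is non-increasing and equals $\eta_i^2$ on $[T_i,T_{i+1})$, we get $f(s) \leq \eta_{j_k}^2$ for $s\in [\tau_k,T_k]$, so the second piece is at most $\eta_{j_k}^2/A_0$.

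It then remains to certify $\eta_{j_k}^2 \leq C k^{-2\theta}$, equivalently $j_k \geq ck$ for some $c>0$ and all large $k$. From $T_{j_k+1} > \tau_k$ one has
$$\alpha \log k > T_k - T_{j_k+1} = \sum_{i=j_k+1}^{k-1}(\ell+i)^{-\theta} \geq (k - j_k - 1)(\ell+k)^{-\theta},$$
which yields $k - j_k - 1 < \alpha (\ell+k)^{\theta} \log k$. Because $\theta < 1$, the right-hand side is $o(k)$, so $j_k \geq k/2$ for all $k \geq k_0$, and consequently $\eta_{j_k}^2 \leq (\ell + k/2)^{-2\theta} \leq C k^{-2\theta}$. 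Combining the two pieces gives $\int_0^{T_k} e^{-A_0(T_k-s)} f(s)\,ds = O(k^{-2\theta})$, which, together with the negligible exponential term, produces the claimed $O(d\,k^{-2\theta})$ bound.

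The main obstacle I foresee is the bookkeeping in the last paragraph, namely relating the cutoff $\tau_k$ to an index $j_k$ close to $k$. This relies crucially on $\theta < 1$: for $\theta=1$, $T_k$ would grow only logarithmically and the correction $(\ell+k)^{\theta}\log k$ would no longer be negligible against $k$, breaking the scheme. Apart from this, the argument is a fairly routine application of the monotonicity of $f$ combined with the exponential kernel in Theorem \ref{longtimesgld_etak}.
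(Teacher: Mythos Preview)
Your proof is correct and follows the same overall strategy as the paper: split the convolution $\int_0^{T_k} e^{-A_0(T_k-s)} f(s)\,ds$ into an early part on which $f$ is bounded by a constant and the exponential kernel supplies the decay, and a late part on which $f$ itself already provides the $k^{-2\theta}$ factor. The only difference is the location of the cut. You choose the time $\tau_k = T_k - (2\theta/A_0)\log k$, which engineers the first piece to be exactly $O(k^{-2\theta})$, but then forces you to run the index-recovery argument showing $j_k \ge k/2$. The paper instead writes the integral as the telescoping sum $\sum_i \eta_i^2\bigl(e^{-A_0(T_k-T_i)}-e^{-A_0(T_k-T_{i-1})}\bigr)$ and splits directly at the index $i=\lfloor k/2\rfloor$: this makes the late-half bound $f\le (2/k)^{2\theta}$ immediate without any bookkeeping, while the early half yields the super-polynomial $e^{-A_0(T_k-T_{\lfloor k/2\rfloor})}$, which is of course still $o(k^{-2\theta})$. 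Both routes land in the same place; the paper's index split is a bit shorter, yours is a bit more precisely calibrated.
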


\begin{proof}[Proof of Corollary \ref{rateforspecial}:]

For $\theta \in (0,1)$, by Theorem \ref{longtimesgld_etak}, we have
\begin{equation*}
    \begin{aligned}
    D_{KL}(\bar{\rho}_{T_k}\|\rho_{T_k}) &\leq c_1 (\beta^2+1) e^{-A_0T_k} + c_2dg(\beta)\left(\sum_{i=1}^{\lfloor k/2 \rfloor} + \sum_{i=1+\lfloor k/2 \rfloor}^{k}\right)\\
    &\quad  \left((\ell+i)^{-2\theta} \left(e^{-A_0(T_k - T_i)} - e^{-A_0(T_k - T_{i-1})} \right)\right)\\
    &\leq c_1(\beta^2+1) e^{-A_0T_k} + c_2'dg(\beta)\left(e^{-A_0(T_k - T_{\lfloor k/2 \rfloor})} - e^{-A_0T_k} \right)\\
    &\quad + c_2'g(\beta)d \left(\frac{2}{k}\right)^{2\theta}\left(1 - e^{-A_0(T_k - T_{1+\lfloor k/2 \rfloor})} \right)\\
    &\leq c_1(\beta^2+1) e^{-A_0T_k} + c_2'dg(\beta)e^{-A_0(T_k - T_{\lfloor k/2 \rfloor})} + c_2'd g(\beta) \left(\frac{2}{k}\right)^{2\theta}.
    \end{aligned}
\end{equation*}
As $k\rightarrow +\infty$, $T_k \sim \sum_{i=1}^k i^{-\theta} \sim k^{1-\theta}$. Hence, $e^{-A_0T_k}$ and $e^{-A_0(T_k - T_{\lfloor k/2 \rfloor})}$ decay much faster than $k^{-2\theta}$ as $k \rightarrow +\infty$. Therefore, there exists $k_0 > 0 $ such that when $k > k_0$,
\begin{equation}
    D_{KL}(\bar{\rho}_{T_k}\|\rho_{T_k}) \leq cdg(\beta)\left(\frac{1}{k}\right)^{2\theta},
\end{equation}
where $c$ is a positive constant independent of $k$.
This is what we want.

\end{proof}

\section{Delayed proof for the local estimation}\label{sec:localpf}
In this section, we prove the crucial local analysis result of this paper, Proposition \ref{local_estimate}. For convenience, some technical details are moved to the Appendix.

\subsection{Methods of analysis: an overview}\label{sec:overview}
The proof of our key analysis, Proposition \ref{local_estimate}, is a bit complicated and technical. For readers' convenience, here we first give a high-level overview of our proof, along with some discussions on key techniques leading to the improved error bound compared with existing results.

For simplicity, here we only consider the constant time step $\eta_k \equiv \eta$. Also, we omit the dependence on dimension $d$ and inverse temperature $\beta$ in the error bounds. Then the goal for the local analysis (Proposition \ref{local_estimate}) is to show that
\begin{equation}\label{eq:goal_overview}
    \frac{d}{dt} D_{KL}\left(\bar{\rho}_t \| \rho_t \right) \leq A_0 D_{KL}\left(\bar{\rho}_t \| \rho_t \right) + A_1 \eta^2 \left(1 + \mathcal{I}(\bar{\rho}_{T_k}) \right),\quad \forall t \in [T_k,T_{k+1})
\end{equation}
for some positive $A_0$, $A_1$ independent of $\eta$ and $T$. Clearly, once \eqref{eq:goal_overview} is proved, by Gr\"onwall's inequality and the upper bound for the Fisher information  $\mathcal{I}(\bar{\rho}_{T_k})$ in Lemma \ref{Mfisher}, the uniform-in-time $O(\eta^2)$ estimate
\begin{equation*}
    \sup_{t \geq 0}D_{KL}\left(\bar{\rho}_t \| \rho_t \right) \lesssim \eta^2
\end{equation*}
is established, which is the simplified version of Theorem \ref{longtimesgld_etak}. We prove \eqref{eq:goal_overview} by the following main steps:
\begin{enumerate}
    \item Estimate time derivative of KL-divergence $\frac{d}{dt}D_{KL}(\bar{\rho}_t \| \rho_t)$ via Fokker-Planck equations.

    Different from other regular approaches to analyzing SGLD and related algorithms \cite{jin2020random,zou2021faster,raginsky2017non}, our method begins with continuous time evolution of the distributions $\bar{\rho}_t$, namely, the Fokker-Planck equation derived in Lemma \ref{lmm:ulafp}. Then, with the continuous time evolution, we are able to calculate and estimate the time derivative:
    \begin{equation*}
    \begin{aligned}
    \frac{d}{dt} D_{KL}(\bar{\rho}_t\|\rho_t) &\leq - A_0  D_{KL}(\bar{\rho}_t\|\rho_t) 
    + \beta\, \mathbb{E}_{\xi_k} \left[ \int |\bar{b}_t^{\xi_k} - b^{\xi_k}|^2 \bar{\rho}_t^{\xi_k} dx \right]\\
    &+ \beta\, \mathbb{E}_{\xi_k,\tilde{\xi}_k} \left[\int |b^{\xi_k}-b|^2 \frac{|\bar{\rho}_t^{\tilde{\xi}_k} - \bar{\rho}_t^{\xi_k}|^2}{\bar{\rho}_t^{\tilde{\xi}_k}} dx\right] := - A_0  D_{KL}(\bar{\rho}_t\|\rho_t)  + (I) + (II).
    \end{aligned}
    \end{equation*}
    Above, the densities $\bar{\rho}^{\xi_k}$ and drifts $b^{\xi_k}$ correspond to the fixed-batch version of SGLD, the function $\bar{b}_t^{\xi_k}$ is of the form of backward conditional expectation as we derived in Lemma \ref{lmm:ulafp}, and $\xi_k$, $\tilde{\xi}_k$ are independent. More details and rigorous definitions would be given at the beginning of Section \ref{sec:detailedproof}. Moreover, in this step, we use the log-Sobolev inequality in Assumption \ref{ass:pi}, which is essential to make our error bound uniform-in-time.

    \item Estimate the error term $(I)$ arising from time discretization via Taylor's expansion and integration by parts.

    At first glance, after Taylor's expansion for $\bar{b}_t^{\xi_k} - b^{\xi_k}$, one can only obtain a total $O(\eta)$ bound if a naive estimation via Jensen's inequality and tower property of the conditional expectations is conducted. However, since the function $\bar{b}_t^{\xi_k}$ is a backward conditional expectation, it is possible to cancel out the error term brought by the Brownian motion via more careful, complicated calculus. In fact, applying Bayes' law and integration by parts to the leasing term in the Taylor's expansion, and the second-order smoothness assumption in Assumption \ref{ass:b} to the remainder term in the Taylor's expansion, we have the following estimate for $(I)$:
    \begin{equation*}
        (I) \lesssim \eta^2(1 + \mathcal{I}(\bar{\rho}_{T_k}))
    \end{equation*}

    \item Estimate the error term $(II)$ arising from the random batch via techniques including Girsanov's theorem and some basic properties of path measures.

    From the boundedness assumption in \ref{ass:b}, it suffices to estimate $\int \left|\frac{\bar{\rho}_t^{\xi_k}}{\bar{\rho}_t^{\tilde{\xi}_k}}-1\right|^2 \bar{\rho}_t^{\tilde{\xi}_k} dx$. Note that $\bar{\rho}_t^{\xi_k}$, $\rho_t^{\xi_k}$ are time marginal distributions of solutions to two SDEs with same initial, same diffusion, but different drifts. Therefore, one may obtain an explicit expression using the following property of path measures (More details can be found near \eqref{pathmeasure} and Appendix \ref{sec:pathmeasure})
    \begin{equation*}
    \frac{\bar{\rho}_t^{\xi_k}}{\bar{\rho}_t^{\tilde{\xi}_k}} (x) = \mathbb{E} \left[\frac{dP_{\bar{X}}}{dP_{\bar{X}'}} \Bigg| \bar{X}'_t = x, \xi_k, \tilde{\xi}_k  \right], \quad t \in [T_k,T_{k+1})
    \end{equation*}
    where $P_{\bar{X}}$, $P_{\bar{X}'}$ are laws of $\bar{X}$, $\bar{X}'$ in the path space in $C\left([T_k,t]; \mathbb{R}^d \right)$ and note that $\bar{X}$, $\bar{X}'$ are two copies of SGLD with random batches $\xi_k$, $\tilde{\xi}_k$. Hence, after Girsanov's transform, the error term $(II)$ turns into a backward conditional expectation of some exponential minus one (see the expression after \eqref{eq:tildeb}). Finally, after similar calculus for $(I)$, we have the following estimate for $(II)$:
    \begin{equation*}
        (II) \lesssim \eta^2(1 + \mathcal{I}(\bar{\rho}_{T_k})).
    \end{equation*}
\end{enumerate}

\subsection{Proof of Proposition \ref{local_estimate}}\label{sec:detailedproof}
\begin{proof}[Proof of Proposition \ref{local_estimate}:]
We prove this local result following three main steps.

\textbf{STEP 1:} estimate time derivative of KL-divergence $\frac{d}{dt} D_{KL}(\bar{\rho}_t\|\rho_t)$ via Fokker-Planck equations.

Firstly, note that SGLD at discrete time points is a Markov chain (which is time-homogeneous when $\eta_k \equiv \eta$ is a constant), and $\bar{\rho}_{T_k}$  is the law at $T_k$. Recall that $\rho_t^{\b{\xi}}$ is the probability density of the fixed-batch version of SGLD \eqref{sgld_continuous} for a given sequence of batches $\b{\xi}:=(\xi_0, \xi_1, \cdots, \xi_k, \cdots)$ so that $\bar{\rho}_{T_k} = \mathbb{E}_{\xi}\left[\rho_{T_k}^{\b{\xi}}\right]$. Moreover, by Markov property, we are able to define
\begin{equation}\label{eq:Lstar}
    \bar{\rho}_t^{\xi_k} := \mathbb{E}\left[\rho_t^{\b{\xi}} \Big| \xi_i,\,i\geq k\right] = \cS_{T_k,t}^{\xi_k} \bar{\rho}_{T_k}, \quad t \in [T_k,T_{k+1}),
\end{equation}
where the operator $\cS_{T_k,t}^{\xi_k}$ is the evolution operator from $T_k$ to $t$ for the Fokker-Planck equation of the continuous SGLD \eqref{sgld_continuous} derived in Lemma \ref{lmm:ulafp}, for some given $\xi_k$:  
\begin{equation}\label{FP_xi}
    \partial_t \bar{\rho}_t^{\xi_k} = -\nabla \cdot (\bar{\rho}_t^{\xi_k} \bar{b}_t^{\xi_k}) + \beta^{-1} \Delta \bar{\rho}_t^{\xi_k},\quad \bar{\rho}_{T_k}^{\xi_k}
    =\bar{\rho}_{T_k},
\end{equation}
where $t \in [T_k,T_{k+1})$ and
\begin{equation}
    \bar{b}^{\xi_k}_t(x) := \mathbb{E}\left[b^{\xi_k}\left( \bar{X}_{T_k}\right) |\bar{X}_t = x, \xi_k\right], \quad t \in \left[T_k,T_{k+1}\right).
\end{equation}

Next, we calculate $\frac{d}{dt} D_{KL}(\bar{\rho}_t\|\rho_t)$ based on \eqref{FP_xi}.
Since $\bar{\rho}_t = \mathbb{E}_{\xi_k}[\bar{\rho}_t^{\xi_k}]$ for $t \in [T_k,T_{k+1})$, by \eqref{FP_xi} we have
\begin{equation}\label{FPbarpi}
\partial_t \bar{\rho}_t = \mathbb{E}_{\xi_k}
\left[-\nabla \cdot (\bar{b}_t^{\xi_k} \bar{\rho}_t^{\xi_k})\right] + \beta^{-1} \Delta \bar{\rho}_t.
\end{equation}
Then, using the Fokker-Planck equations \eqref{FPbarpi}, \eqref{FP} for $\bar{\rho}_t$ and $\rho_t$, respectively, we are able to calculate the following time derivative of the KL-divergence in the time interval $[T_k,T_{k+1})$:

\begin{equation}\label{dtKL}
\begin{aligned}
    &\quad\frac{d}{dt} D_{KL}(\bar{\rho}_t\|\rho_t)
    = \int \left( \partial_t \bar{\rho}_t \right) \left(\log \frac{\bar{\rho}_t}{\rho_t} + 1 \right)dx + \int \left( \partial_t \rho_t \right)\left(-\frac{\bar{\rho_t}}{\rho_t} \right) dx\\
   &=(\mathbb{E}_{\xi_k}(\bar{b}_t^{\xi_k} \bar{\rho}_t^{\xi_k})
   -\beta^{-1}\nabla\bar{\rho}_t)\cdot (\nabla \log \frac{\bar{\rho}_t}{\rho_t})+ (b\rho_t - \beta^{-1} \nabla \rho_t) \cdot (-\nabla \frac{\bar{\rho}_t}{\rho_t})  dx.\\
 &= \int \left((b\bar{\rho}_t - \beta^{-1}\nabla \bar{\rho}_t) \cdot (\nabla \log \frac{\bar{\rho}_t}{\rho_t}) + (b\rho_t - \beta^{-1} \nabla \rho_t) \cdot (-\nabla \frac{\bar{\rho}_t}{\rho_t}) \right) dx\\
 &   \quad + \int \mathbb{E}_{\xi_k}\left[(\bar{b}_t^{\xi_k} - b^{\xi_k}) \bar{\rho}_t^{\xi_k} \right] \cdot (\nabla \log \frac{\bar{\rho}_t}{\rho_t}) dx
    + \int \mathbb{E}_{\xi_k}\left[b^{\xi_k}\bar{\rho}_t^{\xi_k} - b\bar{\rho}_t \right] \cdot (\nabla \log \frac{\bar{\rho}_t}{\rho_t}) dx
\end{aligned}
\end{equation}
Note that $|b^{\xi_k}\bar{\rho}_t^{\xi_k} - b\bar{\rho}_t|$ is of order $O(1)$. The mechanism is that $\bar{\rho}_t^{\xi_k}$ is close to $\bar{\rho}_t$ and using the consistency of the random batch $\mathbb{E}_{\xi_k}\left[b^{\xi_k}(x)\right] = b(x)$. This fact can help cancel the leading error to some extent, by observing $\mathbb{E}_{\xi_k}[b^{\xi_k}\bar{\rho}_t^{\xi_k}-b\bar{\rho}_t]=\mathbb{E}_{\xi_k}[(b^{\xi_k}-b)(\bar{\rho}_t^{\xi_k}-\bar{\rho}_t)]$ and clearly the right-hand side is a small term. Our detailed strategy here is to rearrange the terms to get
\begin{equation}\label{eq4_9}
    \begin{aligned}
  \frac{d}{dt} D_{KL}(\bar{\rho}_t\|\rho_t)
    & = \left(-\beta^{-1} \int |\nabla \log \frac{\bar{\rho}_t}{\rho_t}|^2  \bar{\rho_t} dx\right)
    + \left(\int \mathbb{E}_{\xi_k}\left[(\bar{b}_t^{\xi_k} - b^{\xi_k}) \bar{\rho}_t^{\xi_k} \right] \cdot (\nabla \log \frac{\bar{\rho}_t}{\rho_t}) dx \right)\\
    & \quad + \left( \int \mathbb{E}_{\xi_k}\left[(b^{\xi_k} - b)(\bar{\rho}_t^{\xi_k} - \bar{\rho}_t) \right] \cdot (\nabla \log \frac{\bar{\rho}_t}{\rho_t}) dx \right)\\
    & := J_1 + J_2 + J_3.
    \end{aligned}
\end{equation}


For $J_2$, by Young's inequality, 
\begin{equation}\label{roma2}
\begin{aligned}
     J_2 & =\mathbb{E}_{\xi_k}\left[ \int (\bar{b}_t^{\xi_k} - b^{\xi_k}) \bar{\rho}_t^{\xi_k}  \cdot (\nabla \log \frac{\bar{\rho}_t}{\rho_t}) dx\right]\\
     & \leq \beta\mathbb{E}_{\xi_k}\int |\bar{b}_t^{\xi_k} - b^{\xi_k}|^2 \bar{\rho}_t^{\xi_k} dx + \frac{1}{4\beta} \int |\nabla \log \frac{\bar{\rho}_t}{\rho_t}|^2  \bar{\rho_t} dx ,
\end{aligned}
\end{equation}
where $\gamma$ is a positive constant.

For $J_3$, our approach is to introduce another copy of SGLD $\bar{X}'$ such that 
\begin{itemize}
\item $\bar{X}'_{T_k} = \bar{X}_{T_k}$;
\item  the Browmian Motion  are the same in $[T_k,T_{k+1})$;
\item the batch $\tilde{\xi}_k$ on $[T_k,T_{k+1})$ is independent of $\xi_k$.
\end{itemize}
 Consequently, the corresponding density of the law $\bar{\rho_t}^{\tilde{\xi}_k}$ for $\bar{X}'$ satisfy both  \eqref{eq:Lstar} and \eqref{FP_xi}, with $\bar{\rho}_{T_k}^{\tilde{\xi}_k} = \bar{\rho}_{T_k}$. Using the consistency of the random batch, we are able to obtain
\begin{equation}
    J_3 = \mathbb{E}_{\xi_k, \tilde{\xi}_k} \int \left[(b^{\xi_k} - b)(\bar{\rho}_t^{\xi_k} - \bar{\rho}_t^{\tilde{\xi}_k}) \right] \cdot (\nabla \log \frac{\bar{\rho}_t}{\rho_t}) dx
\end{equation}
Then by Young's inequality we have:
\begin{equation}\label{eq4_12}
    J_3 \leq \beta \mathbb{E}_{\xi_k,\tilde{\xi}_k}  \left[ \int |b^{\xi_k}-b|^2\frac{|\bar{\rho}_t^{\tilde{\xi}_k} - \bar{\rho}_t^{\xi_k}|^2}{\bar{\rho}_t^{\tilde{\xi}_k}} dx \right] + \frac{1}{4\beta} \int |\nabla \log \frac{\bar{\rho}_t}{\rho_t}|^2  \bar{\rho_t} dx
\end{equation}
where we applied the fact $\mathbb{E}_{\xi_k,\tilde{\xi}_k} |\nabla \log \frac{\bar{\rho}_t}{\rho_t}|^2  \bar{\rho}_t^{\tilde{\xi}_k} =|\nabla \log \frac{\bar{\rho}_t}{\rho_t}|^2  \bar{\rho}_t$. The introduction of the independent copy of $\tilde{\xi}_k$ is useful since we may apply the Girsanov transform later to estimate this quantitatively.

Now by Proposition \ref{LSIpropagation}, $\rho_t$ satisfies a LSI with a constant $\lambda^2\kappa(\beta)$, namely, $\forall f \in C_{>0}^{\infty}$,
\begin{equation}
    Ent_{\rho_t} (f) := \int f \log f d\rho_t -\left(\int f d\rho_t\right) \log\left(\int f d\rho_t\right) \leq \lambda^2 \kappa(\beta) \int \frac{|\nabla f|^2}{f} d\rho_t.
\end{equation}
Taking $f = \frac{\bar{\rho}_t}{\rho_t}$, one has
\begin{equation}\label{LSIapply}
    \int |\nabla \log \frac{\bar{\rho}_t}{\rho_t}|^2  \bar{\rho_t} dx \geq \frac{1}{\lambda^2 \kappa(\beta)} D_{KL}(\bar{\rho}_t\|\rho_t).
\end{equation}
Then, combining \eqref{eq4_9}, \eqref{roma2}, \eqref{eq4_12}, and \eqref{LSIapply}, one has 
\begin{equation}\label{314}
\begin{aligned}
    \frac{d}{dt} D_{KL}(\bar{\rho}_t\|\rho_t) &\leq - A_0  D_{KL}(\bar{\rho}_t\|\rho_t) \\
    &+ \beta\, \mathbb{E}_{\xi_k} \left[ \int |\bar{b}_t^{\xi_k} - b^{\xi_k}|^2 \bar{\rho}_t^{\xi_k} dx \right] + \beta\, \mathbb{E}_{\xi_k,\tilde{\xi}_k} \left[\int |b^{\xi_k}-b|^2 \frac{|\bar{\rho}_t^{\tilde{\xi}_k} - \bar{\rho}_t^{\xi_k}|^2}{\bar{\rho}_t^{\tilde{\xi}_k}} dx\right],
\end{aligned}
\end{equation}
where
\begin{equation}
    A_0 = \frac{\beta^{-1}}{2\lambda^2 \kappa(\beta)}.
\end{equation}
is a $\beta$-free constant. With \eqref{314}, to obtain the final estimate, one then needs to obtain an $O(\eta_k^2)$ estimate for the two terms:  $\mathbb{E}_{\xi_k} \mathbb{E}\left[ |\bar{b}^{\xi_k}_t(\bar{X}_t) - b^{\xi_k}(\bar{X}_t)|^2\Big| \xi_k\right]$ and $\mathbb{E}_{\xi_k,\tilde{\xi}_k} \left[\int |b^{\xi_k}-b|^2 \frac{|\bar{\rho}_t^{\tilde{\xi}_k} - \bar{\rho}_t^{\xi_k}|^2}{\bar{\rho}_t^{\tilde{\xi}_k}} dx\right]$ .

\textbf{STEP 2:} estimate $\mathbb{E}_{\xi_k} \mathbb{E}\left[ |\bar{b}^{\xi_k}_t(\bar{X}_t) - b^{\xi_k}(\bar{X}_t)|^2\Big| \xi_k\right]$

In this step, we aim to obtain an $O(\eta_k^2)$ bound for $\mathbb{E}_{\xi_k} \mathbb{E}\left[ |\bar{b}^{\xi_k}_t(\bar{X}_t) - b^{\xi_k}(\bar{X}_t)|^2\Big| \xi_k\right]$ based on Taylor expansion for $b^{\xi_k}$. Note that $\xi_k$ is fixed so we may follow the estimate for the Unadjusted Langevin Algorithm (ULA) in \cite{mou2019improved} in this step.

By Taylor expansion, $\forall t \in [T_k,T_{k+1})$,
\begin{equation*}
    \begin{aligned}
    &\quad \bar{b}^{\xi_k}_t(x) - b^{\xi_k}(x)  = \mathbb{E} \left[b^{\xi_k}(\bar{X}_{T_k})- b^{\xi_k}(\bar{X}_t) | \bar{X}_t = x, \xi_k \right]\\
    &= \mathbb{E} [\bar{X}_{T_k} - \bar{X}_t | \bar{X}_t = x, \xi_k ]\cdot \nabla b^{\xi_k}(x)\\
    & \quad +\frac{1}{2} \mathbb{E} \left[\left(\bar{X}_{T_k}- \bar{X}_t\right)^{\otimes 2} : \int_0^1  \nabla^2 b^{\xi_k} \left((1-s)\bar{X}_t + s\bar{X}_{T_k} \right)ds  \Big| \bar{X}_t = x , \xi_k\right].
    \end{aligned}
\end{equation*}
For simplicity, we denote 
\begin{equation}\label{defrt}
    \bar{r}_t(x) := \frac{1}{2}\mathbb{E} \left[\left(\bar{X}_{T_k} - \bar{X}_t\right)^{\otimes 2} : \int_0^1  \nabla^2 b^{\xi_k} \left((1-s)\bar{X}_t + s\bar{X}_{T_k} \right)ds  \Big| \bar{X}_t = x, \xi_k\right].
\end{equation}
In Lemma \ref{lmmrt} , we show that for $t \in \left[T_k, T_{k+1} \right)$,
\begin{equation}
    \mathbb{E}\left[ |\bar{r}_t(\bar{X}_t)|^2 \Big| \xi_k \right]\leq c d \beta^{-2}(t-T_k)^2,
\end{equation}
where $c$ is a positive constant independent of $k$, $d$ and $\xi_k$.
For the first term $  \mathbb{E} [\bar{X}_{T_k} - \bar{X}_t | \bar{X}_t = x, \xi_k]\cdot \nabla b^{\xi_k}(x)$, by Bayes' law we have
\begin{equation}\label{319}
\begin{aligned}
     \mathbb{E} [\bar{X}_{T_k} - \bar{X}_t| \bar{X}_t^{\xi}  = x , \xi_k ] &= \int (y-x)P(\bar{X}_{T_k} = y | \bar{X}_t = x,\xi_k) dy\\
     & = \int (y-x) \frac{\bar{\rho}_{T_k}^{\xi_k}(y)P(\bar{X}_t = x | \bar{X}_{T_k} = y,\xi_k)}{\bar{\rho}_t^{\xi_k}(x)} dy.
\end{aligned}
\end{equation}
Clearly, the distribution $P(\bar{X}_t = x | \bar{X}_{T_k} = y,\xi_k)$ is Gaussian, namely,
\begin{equation}
    P\left(\bar{X}_t=x|\bar{X}_{T_k}=y,\xi_k\right) = \left(4\pi \beta^{-1} (t-T_k)\right)^{-\frac{d}{2}}\exp\left(-\frac{|x-y-b^{\xi_k}(y)(t-T_k)|^2}{4\beta^{-1}(t-T_k)}\right),
\end{equation}
which motivates us to calculate \eqref{319} via integration by parts. Indeed, we show in Lemma \ref{integralbyparts} that
\begin{equation}\label{320}
    \int \left|\mathbb{E} \left[\bar{X}_{T_k} - \bar{X}_t | \bar{X}_t  = x, \xi_k\right]\right|^2 \bar{\rho}_t^{\xi_k}(x)dx \leq c \eta_k^2 \left(d(1 + \beta^{-1})+\beta^{-2}\mathcal{I}(\bar{\rho}_{T_k})\right),\quad \forall t\in [T_k,T_{k+1}), 
\end{equation}
where $\mathcal{I}(\rho) := \int \rho |\nabla \log \rho|^2 dx$ is the Fisher information, and  $c$ is a positive constant independent of $k$, $d$ and $\xi_k$. Then, it is left to give an $O(1)$ estimate for each Fisher information $\mathcal{I}(\bar{\rho}_{T_k})$ in ULA, which has been done in Section \ref{auxiliary}.

\textbf{STEP 3:} estimate $\mathbb{E}_{\xi_k,\tilde{\xi}_k} \left[\int |b^{\xi_k}-b|^2 \frac{|\bar{\rho}_t^{\tilde{\xi}_k} - \bar{\rho}_t^{\xi_k}|^2}{\bar{\rho}_t^{\tilde{\xi}_k}} dx\right]$.

By the boundedness assumption in Assumption \ref{ass:b}, we only need to estimate $\int  \frac{|\bar{\rho}_t^{\tilde{\xi}_k} - \bar{\rho}_t^{\xi_k}|^2}{\bar{\rho}_t^{\tilde{\xi}_k}} dx $. Recall that $\bar{\rho}_t^{\xi_k}$ and $\bar{\rho}_t^{\tilde{\xi}_k}$ are the densities of the laws of $\bar{X}$ and $\bar{X}'$ with two independent batches $\xi_k$ and $\tilde{\xi}_k$ for interval $[T_k, T_{k+1})$. Hence the problem is again reduced to the ULA case.

We first note that
\begin{equation}
    \int  \frac{|\bar{\rho}_t^{\tilde{\xi}_k} - \bar{\rho}_t^{\xi_k}|^2}{\bar{\rho}_t^{\tilde{\xi}_k}} dx = \int \left|\frac{\bar{\rho}_t^{\xi_k}}{\bar{\rho}_t^{\tilde{\xi}_k}}-1\right|^2 \bar{\rho}_t^{\tilde{\xi}_k} dx,
\end{equation}
and  the property of path measures gives
\begin{equation}\label{pathmeasure}
    \frac{\bar{\rho}_t^{\xi_k}}{\bar{\rho}_t^{\tilde{\xi}_k}} (x) = \mathbb{E} \left[\frac{dP_{\bar{X}}}{dP_{\bar{X}'}} \Bigg| \bar{X}'_t = x, \xi_k, \tilde{\xi}_k  \right],
\end{equation}
where $P_{\bar{X}}$ and $P_{\bar{X}'}$ are path measures in $C\left([T_k,T_{k+1}]; \mathbb{R}^d \right)$. 
Let
\[
\bar{X}_{T_k}=\bar{X}'_{T_k}=y\sim \bar{\rho}_{T_k}.
\]
Then for $t \in [T_k,T_{k+1})$, Girsanov transform gives:
\begin{equation}\label{eq:gir}
    \frac{dP_{\bar{X}}}{dP_{\bar{X}'}} (\bar{X}') = \exp\left(\sqrt{\frac{\beta}{2}}\int_{T_k}^{T_{k+1}} (b^{\tilde{\xi}_k} - b^{\xi_k}) (y) dW_s - \frac{\beta}{4} \int_{T_k}^{T_{k+1}} \left|(b^{\tilde{\xi}_k} - b^{\xi_k}) (y)\right|^2 ds\right).
\end{equation}
Details for \eqref{pathmeasure} and \eqref{eq:gir} can be found in Corollary \ref{coro:pathmeasure} of Appendix \ref{sgldpf}. 
Denote
\begin{equation}\label{eq:tildeb}
    \tilde{b}(y) := \left(b^{\tilde{\xi}_k} - b^{\xi_k}\right)(y).
\end{equation}
Then, for $t \in [T_k, T_{k+1})$, the martingale property gives
\[
\frac{\bar{\rho}_t^{\xi_k}}{\bar{\rho}_t^{\tilde{\xi}_k}} (x)
=\mathbb{E} \left[
\exp\left(\sqrt{\frac{\beta}{2}}\int_{T_k}^t \tilde{b} (y) dW_s - \frac{\beta}{4} \int_{T_k}^t |\tilde{b} (y)|^2 ds\right) \Bigg| \bar{X}'_t = x, \xi_k, \tilde{\xi}_k  \right].
\]
Hence,
\begin{equation*}
\begin{aligned}
    &\quad \int \left|\frac{\bar{\rho}_t^{\xi_k}}{\bar{\rho}_t^{\tilde{\xi}_k}}-1\right|^2 \bar{\rho}_t^{\tilde{\xi}_k} dx \\
    &= \int \bar{\rho}_t^{\tilde{\xi}_k}(x)\left|\mathbb{E}\left[e^{\sqrt{\frac{\beta}{2}}\int_{T_k}^t \tilde{b} (\bar{X}'_{T_k}) dW_s - \frac{\beta}{4} \int_{T_k}^t |\tilde{b} (\bar{X}'_{T_k})|^2 ds} \Big| \bar{X}'_t = x, \xi_k, \tilde{\xi}_k \right] - 1\right|^2 dx\\
    & = \int \bar{\rho}_t^{\tilde{\xi}_k} (x) \left|\int \left(  e^ {\frac{\beta}{2}\,\tilde{b}(y)(x-y-(t-T_k)b^{\tilde{\xi}_k}(y)) - \frac{\beta}{4} |\tilde{b}(y)|^2(t-T_k)} - 1 \right) P(\bar{X}_{T_k} = y|\bar{X}'_t = x,\xi_k,\tilde{\xi}_k) dy \right|^2 dx\\
\end{aligned}
\end{equation*}
Above we have used the fact that $\bar{X}'_t=y+b^{\tilde{\xi}_k}(y)(t-T_k)+2\beta^{-1}(W_t-W_{T_k})$, resulting in that
\[
\sqrt{2\beta^{-1}}(W_t-W_{T_k})
=x-y-b^{\tilde{\xi}_k}(y)(t-T_k).
\]
Now in order to handle the integration of the term $e^ {\frac{\beta}{2}\tilde{b}(y)(x-y-(t-T_k)b^{\tilde{\xi}_k}(y)) - \frac{\beta}{4} |\tilde{b}(y)|^2(t-T_k)}$, we split it by
\begin{equation}
    \begin{aligned}
    & \quad e^ {\frac{\beta}{2}\,\tilde{b}(y)(x-y-(t-T_k)b^{\tilde{\xi}_k}(y)) - \frac{\beta}{4} |\tilde{b}(y)|^2(t-T_k)} - 1 \\
    &=  \frac{\beta}{2}\,\tilde{b}(y)(x-y) + \left(-\frac{\beta}{2}\,\tilde{b}(y) b^{\tilde{\xi}_k}(y)- \frac{\beta}{4} |\tilde{b}(y)|^2\right)(t-T_k)  + \big(e^z-z-1 \big)  \\
    & := K_1 + K_2 + K_3,
    \end{aligned}
\end{equation}
where 
\begin{equation}
    z  := \frac{\beta}{2}\,\tilde{b}(y)(x-y-(t-T_k)b^{\tilde{\xi}_k}(y)) - \frac{\beta}{4} |\tilde{b}(y)|^2(t-T_k).
\end{equation}
Then
\begin{equation}
    \int \left|\frac{\bar{\rho}_t^{\xi_k}}{\bar{\rho}_t^{\tilde{\xi}_k}}-1\right|^2 \bar{\rho}_t^{\tilde{\xi}_k} dx = \int \bar{\rho}_t^{\tilde{\xi}_k} (x) \left|\int \left(K_1 + K_2 + K_3 \right) P(\bar{X}_{T_k} = y|\bar{X}'_t = x,\xi_k,\tilde{\xi}_k) dy \right|^2dx.
\end{equation}

For $K_1 = \frac{\beta}{2}\,\tilde{b}(y)(x-y)$, using integration by parts again, we prove in Lemma \ref{lmm:i} in Section \ref{sgldpf} that  for $t \in [T_k,T_{k+1})$,
\begin{equation}
    \int \bar{\rho}_t^{\tilde{\xi}_k} (x) \left(\int K_1 \,  P(\bar{X}_{T_k} = y|\bar{X}'_t = x,\xi_k,\tilde{\xi}_k) dy \right)^2dx \leq c \eta_k^2  \left(d(\beta^2+1)+\mathcal{I}(\bar{\rho}_{T_k})\right).
\end{equation}
where $c$ is a positive constant independent of $k$, $d$, $\tilde{\xi_k}$ and $\xi_k$.

For $K_2 = (-\frac{\beta}{2}\,\tilde{b}(y) \cdot b^{\tilde{\xi}_k}(y))- \frac{\beta}{4} |\tilde{b}(y)|^2)(t-T_k)$, using the boundedness and Lipschitz condition in Assumption \ref{ass:b}, we prove in Lemma \ref{lmm:ii} that  for $t \in [T_k,T_{k+1})$,
\begin{equation}
     \int \bar{\rho}_t^{\tilde{\xi}_k} (x) \left(\int K_2 \, P(\bar{X}_{T_k} = y|\bar{X}'_t = x,\xi_k,\tilde{\xi}_k) dy \right)^2dx \leq c d\beta(\beta+1)\eta_k^2,
\end{equation}
where $c$ is a positive constant independent of $k$, $d$, $\tilde{\xi}_k$ and $\xi_k$.

For the remaining term $K_3$, after applying Jensen's inequality and the tower property of conditional expectation, we prove in Lemma \ref{lmm:iii} that for $t \in [T_k,T_{k+1})$, 
\begin{equation}\label{iiibound}
     \int \bar{\rho}_t^{\tilde{\xi}_k} (x) \left(\int K_3 \, P(\bar{X}_{T_k} = y|\bar{X}'_t = x,\xi_k,\tilde{\xi}_k) dy \right)^2dx \leq c d\beta^2\eta_k^2.
\end{equation}
where $c$ is a positive constant independent of $k$, $d$, $\tilde{\xi}_k$ and $\xi_k$.


Equipped with the estimation for the integration of $K_1$, $K_2$, and $K_3$, one finally obtains
\begin{equation}\label{328}
    \int \left|\frac{\bar{\rho}_t^{\xi_k}}{\bar{\rho}_t^{\tilde{\xi}_k}}-1\right|^2 \bar{\rho}_t^{\tilde{\xi}_k} dx \leq c \eta_k^2 \left(d(\beta^2+1) + \mathcal{I}(\bar{\rho}_{T_k})\right) , \quad \forall t \in [T_k,T_{k+1}),
\end{equation}
where $c$ is a positive constant independent of $k$, $d$, $\tilde{\xi}_k$ and $\xi_k$. Combining with the estimate for Fisher information obtained in Section \ref{auxiliary}, one is able to get an $\eta_k^2$ estimation for the term $\mathbb{E}_{\xi_k,\tilde{\xi}_k} \left[\int |b^{\xi_k}-b|^2 \frac{|\bar{\rho}_t^{\tilde{\xi}_k} - \bar{\rho}_t^{\xi_k}|^2}{\bar{\rho}_t^{\tilde{\xi}_k}} dx\right]$. Note that after taking the expectation $\mathbb{E}_{\xi_k,\tilde{\xi}_k}[\cdot]$, the bound is still of order $\eta_k^2$, since the bounds above are all independent of the batches $\xi_k$, $\tilde{\xi}_k$.

Combining the results in STEP 2 and STEP 3 finally leads to the desired result:
\begin{multline}
    \frac{d}{dt} D_{KL}(\bar{\rho}_t\|\rho_t) \leq - A_0 D_{KL}(\bar{\rho}_t\|\rho_t) \\
    + A_1 \eta_k^2 \left(d (\beta^3+\beta^{-1}) +  (\beta + \beta^{-1}) \mathcal{I}(\bar{\rho}_{T_k})\right),\quad \forall t \in [T_k,T_{k+1}),
\end{multline}
where $A_0$, $A_1$ are positive constants independent of $k$, $d$, $\beta$ and $\xi_k$.
\end{proof}

\section{Discussion}\label{sec:discussion}
\subsection{Convergence to equilibrium and target distribution}
There are two questions researchers in related fields may be interested in: whether the SGLD algorithm can converge to the target distribution? whether the SGLD algorithm itself has an invariant measure and whether can it converge to its only equilibrium? In the following, we answer both questions, respectively.
\begin{itemize}
\item \textbf{Convergence to the target distribution}
    
Viewing SGLD as a popular sampling algorithm, many researchers would care about the convergence rate to the target distribution. An important extension of Theorem \ref{longtimesgld_etak} or Corollary \ref{longtimesgld} is about the distance between the SGLD iteration and the target distribution, namely, the invariant distribution of the Langevin diffusion \eqref{eq:overdampedlangevin}. When we are doing sampling tasks, this result is an important measure for the efficiency of a sampling algorithm. Note that the rate in our result can be viewed as a great improvement compared with former work like \cite{mou2018generalization,zou2021faster,farghly2021time}, where the optimal bound so far is of order $O(\sqrt{\eta})$ in terms of Wasserstein distance or total variation. Meanwhile, as will be derived in the following, we obtain a bound of order $O(\eta)$.

To compare our result with former ones more directly, we consider the constant step size (or learning rate) $\eta$. We consider the Wasserstein-2 ($W_2$) distance, the Wasserstein-1 ($W_1$) distance, and the total variation (TV) distance in the following discussion. Firstly let us  recall the definition of  $W_p$  distance ($p=1,2$ here) and TV distance between two distributions $\mu$ and $\nu$ \cite{santambrogio2015optimal}:
\begin{equation}
    W_{p}(\mu, \nu):=\left(\inf _{\gamma \in \Pi(\mu, v)} \int_{\mathbb{R}^{d} \times \mathbb{R}^{d}}|x-y|^{p} d \gamma\right)^{1 / p},
\end{equation}
and
\begin{equation}
    D_{TV}(\mu,\nu) := \sup_{A\in \mathcal{B}(\mathbb{R}^d)} |\mu(A) - \nu(A)|. 
\end{equation}
Here, $\Pi(\mu, \nu)$ means all joint distributions whose marginal distributions are $\mu$ and $\nu$, respectively.

Now, by Corollary \ref{longtimesgld}, we have (recall $g(\beta)$ and $A_0$ defined in Theorem \ref{longtimesgld_etak})
\begin{equation}
    D_{KL}(\bar{\rho}_t\|\rho_t) \leq cd(\beta^2+1 + g(\beta)/A_0)\eta^2. \quad t>0,
\end{equation}
where $c$ is independent of $t$, and $\eta$ is the constant step size (or learning rate).

Also, since the invariant distribution $\pi$ satisfies a Log-Sobolev inequality by Assumption \ref{ass:pi}, we have the following exponential convergence for the Langevin diffusion \eqref{eq:overdampedlangevin}, which is a classical result \cite{markowich2000trend}:
\begin{equation}
    D_{KL}(\rho_t\|\pi) \leq e^{-\gamma t}D_{KL}(\rho_0\|\pi)
\end{equation}

It is well-known that we can bound the $W_2$, $W_1$, and TV distance with square root of the KL-divergence by Talagrand transportation inequality \cite{otto2000generalization,talagrand1991new}, the weighted Csiszar-Kullback-Pinsker inequality \cite{bolley2005weighted}, and the Pinsker's inequality \cite{pinsker1963information}, respectively. 
Note that for $W_2$ distance and TV distance, the constant $c_1'$ above is dimension-free; for $W_1$ distance, the constant $c_1'$ above is of $O(d^{\frac{1}{2}})$.
Denote $\bar{g}(\beta) := \beta^2 + 1 + g(\beta) / A_0$ (recall $g(\beta)$ and $A_0$ defined in Theorem \ref{longtimesgld_etak}).
Together with the triangle inequality for $W_2$, $W_1$, and TV distances,  one has the following:
\begin{corollary}\label{coro:equilibrium}
Consider the probability density functions $\rho_t$, $\bar{\rho}_t$ for $X_t$, $\bar{X}_t$ defined in \eqref{eq:overdampedlangevin}, \eqref{sgld_continuous}. Suppose Assumption \ref{ass:b}, \ref{ass:pi} hold. Then the following holds: 
\begin{itemize}
\item[(a)] If $\eta_k=\eta$, $\forall k$, then there exist positive constants  $c_1^{W_1}$, $c_2^{W_1}$, $c_1^{W_2}$, $c_2^{W_2}$, $c_1^{TV}$, $c_2^{TV}$, $\gamma$ , $\Delta_0$ independent of $t$, $\beta$ and $d$ such that for all $\eta \in (0, \Delta_0)$,
\begin{itemize}
    \item $W_1(\bar{\rho}_t, \pi) \leq c_1^{W_1} \bar{g}(\beta)^{\frac{1}{2}}d\eta + c_2^{W_1}d^{\frac{1}{2}} e^{-\frac{1}{2}\gamma t},$
    \item $W_2(\bar{\rho}_t, \pi) \leq c_1^{W_2}\bar{g}(\beta)^{\frac{1}{2}}d^{\frac{1}{2}}\eta + c_2^{W_2} e^{-\frac{1}{2}\gamma t},$
    \item $D_{TV}(\bar{\rho}_t, \pi) \leq c_1^{TV}\bar{g}(\beta)^{\frac{1}{2}}d^{\frac{1}{2}}\eta + c_2^{TV} e^{-\frac{1}{2}\gamma t}.$
\end{itemize}
\item[(b)] If $\eta_i = (\ell+i)^{-\theta}$, with $\theta\in (0,1)$ being the decay rate, then there exist positive constants $c^{W_1}$, $c^{W_2}$, $c^{TV}$ independent of $k$, $\beta$ and $d$ such that
\begin{itemize}
    \item $W_1(\bar{\rho}_{T_k}, \pi) \leq c^{W_1} g(\beta)^{\frac{1}{2}}d \left(\frac{1}{k}\right)^{\theta}$,
    \item $W_2(\bar{\rho}_{T_k}, \pi) \leq c^{W_2} g(\beta)^{\frac{1}{2}}d^{\frac{1}{2}} \left(\frac{1}{k}\right)^{\theta}$,
    \item $D_{TV}(\bar{\rho}_{T_k}, \pi) \leq c^{TV} g(\beta)^{\frac{1}{2}} d^{\frac{1}{2}} \left(\frac{1}{k}\right)^{\theta}$.
\end{itemize}
\end{itemize}
\end{corollary}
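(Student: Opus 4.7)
The plan is to use the triangle inequality to split the distance from $\bar{\rho}_t$ (or $\bar{\rho}_{T_k}$) to the target $\pi$ into a ``discretization'' piece and an ``ergodicity'' piece, and bound each piece via a suitable KL divergence combined with a standard comparison inequality. For a generic distance $\mathcal{D}\in\{W_1,W_2,D_{TV}\}$,
\begin{equation*}
\mathcal{D}(\bar{\rho}_t,\pi)\le \mathcal{D}(\bar{\rho}_t,\rho_t)+\mathcal{D}(\rho_t,\pi).
\end{equation*}
The first term will be controlled by Corollary \ref{longtimesgld} (for part (a)) or Corollary \ref{rateforspecial} (for part (b)); the second term is the standard exponential relaxation of the Langevin diffusion under LSI.

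For the discretization piece, I would invoke the three comparison inequalities mentioned in the discussion: Talagrand's $T_2$ transportation inequality for $W_2$ (valid since Theorem \ref{LSIpropagation} gives $\rho_t$ a uniform LSI constant $\lambda^2 C^{LS}_\pi$), the weighted Csiszar--Kullback--Pinsker inequality for $W_1$, and Pinsker's inequality for $D_{TV}$. Plugging $D_{KL}(\bar{\rho}_t||\rho_t)\le cd\eta^2$ from Corollary \ref{longtimesgld} then yields $W_2(\bar{\rho}_t,\rho_t),\,D_{TV}(\bar{\rho}_t,\rho_t)=O(d^{1/2}\eta)$, whereas the weighted CKP inequality introduces an extra $d^{1/2}$ factor through the second-moment prefactor, giving $W_1(\bar{\rho}_t,\rho_t)=O(d\eta)$. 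The second-moment bounds needed for the CKP step are supplied by Proposition \ref{thm:moment} together with the Gaussian-type tails of $\pi$.

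For the ergodicity piece, I would use the classical fact that under the LSI in Assumption \ref{ass:pi}, the Langevin diffusion relaxes exponentially in KL divergence: $D_{KL}(\rho_t||\pi)\le e^{-\gamma t}D_{KL}(\rho_0||\pi)$ with $\gamma=2\beta^{-1}/C^{LS}_\pi$. The warm-start hypothesis $\rho_0/\pi\le\lambda$ immediately gives $D_{KL}(\rho_0||\pi)\le\log\lambda$, dimension-free. Applying the same three comparison inequalities produces $W_2(\rho_t,\pi),\,D_{TV}(\rho_t,\pi)=O(e^{-\gamma t/2})$ and $W_1(\rho_t,\pi)=O(d^{1/2}e^{-\gamma t/2})$, matching the announced constants. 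Combining both pieces proves part (a).

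Part (b) is the same scheme, but with Corollary \ref{rateforspecial} replacing Corollary \ref{longtimesgld}, so that $D_{KL}(\bar{\rho}_{T_k}||\rho_{T_k})\le cd(1/k)^{2\theta}$ and the converted distances are $O(k^{-\theta})$ with the same dimension prefactors as in (a). Here the ergodicity term becomes $e^{-\gamma T_k/2}$, and since $T_k\sim k^{1-\theta}\to\infty$ for $\theta\in(0,1)$, this decays faster than any polynomial in $k$ and is absorbed into the $k^{-\theta}$ bound for $k$ large enough (adjusting the constant to handle finitely many initial $k$). The main subtlety, rather than a genuine obstacle, is bookkeeping the dimension dependence cleanly: the warm-start assumption is what keeps $D_{KL}(\rho_0||\pi)$ independent of $d$, and the extra $d^{1/2}$ in the $W_1$ bounds is precisely the price of passing from KL to $W_1$ via a moment-weighted inequality, consistent with the stated rates.
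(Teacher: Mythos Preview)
Your proposal is correct and follows essentially the same route as the paper: triangle inequality $\mathcal{D}(\bar{\rho}_t,\pi)\le\mathcal{D}(\bar{\rho}_t,\rho_t)+\mathcal{D}(\rho_t,\pi)$, the first piece bounded via Corollary \ref{longtimesgld} (resp.\ Corollary \ref{rateforspecial}) together with Talagrand/weighted CKP/Pinsker, and the second piece via the LSI-driven exponential decay $D_{KL}(\rho_t\|\pi)\le e^{-\gamma t}D_{KL}(\rho_0\|\pi)$ converted through the same three inequalities. Your explicit invocation of Theorem \ref{LSIpropagation} to justify Talagrand for the reference measure $\rho_t$, and your observation that the warm start keeps $D_{KL}(\rho_0\|\pi)\le\log\lambda$ dimension-free, are exactly the points the paper relies on (the latter is made explicit in Section \ref{subsec:moredis}).
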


By Corollary \ref{coro:equilibrium}, we can conclude the steps of simulations for SGLD needed to achieve a tolerance $\epsilon$ under different distances.
\begin{corollary}
Under Assumption \ref{ass:b}, \ref{ass:pi}, to achieve an error smaller than $\epsilon$, one needs the following numbers of simulations of SGLD respectively  under the corresponding distances:
\begin{itemize}
    \item $N=O( \bar{g}(\beta)^{\frac{1}{2}}d\epsilon^{-1}(\log \epsilon^{-1} + \log d^{\frac{1}{2}}))$ for $W_1$ distance 
    (by setting $\eta = O(\bar{g}(\beta)^{-\frac{1}{2}}d^{-1} \epsilon)$);
   
    \item $N=O( \bar{g}(\beta)^{\frac{1}{2}}d^{\frac{1}{2}}\epsilon^{-1}\log \epsilon^{-1})$ for $W_2$ distance 
    (by setting $\eta = O(\bar{g}(\beta)^{-\frac{1}{2}}d^{-\frac{1}{2}} \epsilon)$) ;
    \item $N=O(\bar{g}(\beta)^{\frac{1}{2}}d^{\frac{1}{2}}\epsilon^{-1}\log \epsilon^{-1})$ for TV distance 
    (by setting $\eta = O(\bar{g}(\beta)^{-\frac{1}{2}}d^{-1} \epsilon)$).
\end{itemize}
\end{corollary}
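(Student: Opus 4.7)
The plan is to directly invoke the three bounds in part (a) of Corollary \ref{coro:equilibrium} and, for each metric, balance the two error contributions: a discretization part that grows linearly in $\eta$ and an ergodic part that decays like $e^{-\gamma t/2}$. Given a tolerance $\epsilon$, I would ask each piece to be $\leq \epsilon/2$, which translates into an upper bound on $\eta$ (from the discretization part) and a lower bound on $t$ (from the ergodic part). The total number of SGLD steps is then $N = t/\eta$, and the claimed complexities will follow by multiplying these two factors.

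First I would treat $W_2$, which is the cleanest case because both constants in Corollary \ref{coro:equilibrium}(a) are dimension-free. From $W_2(\bar{\rho}_t,\pi)\leq c_1^{W_2}d^{1/2}\eta + c_2^{W_2}e^{-\gamma t/2}$, the first term is at most $\epsilon/2$ provided $\eta = O(d^{-1/2}\epsilon)$, and the second term is at most $\epsilon/2$ provided $t \geq (2/\gamma)\log(2c_2^{W_2}/\epsilon) = O(\log\epsilon^{-1})$. Multiplying gives $N = t/\eta = O(d^{1/2}\epsilon^{-1}\log\epsilon^{-1})$. The TV analysis is essentially identical, since the inequality in Corollary \ref{coro:equilibrium}(a) has the same structure, yielding the same complexity.

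Next I would handle $W_1$, which is slightly more delicate because the prefactor $c_2^{W_1}d^{1/2}$ in front of the exponential depends on $d$. Requiring $c_1^{W_1}d\eta \leq \epsilon/2$ forces $\eta = O(d^{-1}\epsilon)$, and requiring $c_2^{W_1}d^{1/2}e^{-\gamma t/2}\leq \epsilon/2$ now forces $t \geq (2/\gamma)\log(2c_2^{W_1}d^{1/2}/\epsilon) = O(\log\epsilon^{-1}+\log d^{1/2})$ because of the extra factor $d^{1/2}$. Combining, $N = t/\eta = O(d\epsilon^{-1}(\log\epsilon^{-1}+\log d^{1/2}))$, matching the stated bound.

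There is no real obstacle here beyond careful bookkeeping of the dimension dependence; the nontrivial work has already been done in Corollary \ref{coro:equilibrium}, where the KL bound of Corollary \ref{longtimesgld} is converted to $W_1$, $W_2$, and TV via Talagrand, weighted Csisz\'ar--Kullback--Pinsker, and Pinsker, respectively, combined with the exponential LSI decay of $\rho_t$ to $\pi$. The only point that requires a brief justification is that the bound on $\eta$ chosen for each metric indeed lies inside the admissibility interval $(0,\Delta_0)$ from Corollary \ref{longtimesgld}, which holds for all sufficiently small $\epsilon$ (independent of $d$ beyond the indicated polynomial factors); this is automatic since our bounds on $\eta$ go to zero as $\epsilon\to 0$.
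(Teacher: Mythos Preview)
Your argument is correct and is exactly the (implicit) approach of the paper, which simply states that the result follows from Corollary \ref{coro:equilibrium} without spelling out the balancing of the two terms. Note only that the parenthetical $\eta = O(d^{-1}\epsilon)$ for the TV case in the statement appears to be a typo for $\eta = O(d^{-1/2}\epsilon)$, since the TV bound in Corollary \ref{coro:equilibrium}(a) carries the same $d^{1/2}\eta$ prefactor as the $W_2$ bound; your remark that the TV case is ``essentially identical'' to $W_2$ is the correct reading and is what actually delivers the claimed $N = O(d^{1/2}\epsilon^{-1}\log\epsilon^{-1})$.
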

These results are improved compared to existing results in literature.

\item \textbf{Ergodicity of SGLD}

An important property of SGLD that remains to be studied is its ergodicity, which then ensures the existence of the invariant distribution of the SGLD dynamic and enables us to explore the convergence of the algorithm itself. 
In \cite{brosse2018promises}, the authors considered the ergodicity of SGLD  under $W_2$ distance with the assumptions of global strong convexity of $U$ and 4-th order Lipschitz condition.

In our setup, we do not have the global strong convexity of $U$ and only have second order Lipschitz condition. 
Luckily, the ergodicity could be studied using reflection coupling \cite{eberle2011reflection,jin2022ergodicity,majka2020nonasymptotic,li2023geometric}, under mild assumptions.
In particular, we have the following claim, which is the main result in \cite{li2023geometric}.

In fact, we need to assume the followings, which can be understood as the positive upper and lower bound of $\nabla^2 U$ outside the circle $B(0,R)$ in $\mathbb{R}^d$.
\begin{assumption}\label{ass:reflection}
There exist $R_0 >0$, $\kappa_0 > 0$, $K > 0$ such that the followings hold:
    \begin{itemize}
        \item[(a)](locally nonconvex) The Hessian matrix of $U$ is uniformly positive definite outside $B(0,R_0)$, namely,
        \begin{equation}
            \nabla^2 U(x) \succeq \kappa_0 I_d, \quad \forall x \in \mathbb{R}^d \setminus B(0,R_0);
        \end{equation}
        \item[(b)](global uniform-in-batch Lipschitz) $\forall x,y \in \mathbb{R}^d$, $\forall \xi$,
        \begin{equation}
            \left|\nabla U^{\xi}(x) - \nabla U^{\xi}(y) \right| \leq K |x-y|.
        \end{equation}
    \end{itemize}
\end{assumption}
Then the following claim holds.
\begin{proposition}
Suppose Assumption \ref{ass:reflection} holds. Denote $\Delta_0 := \sup_k \eta_k$. There exist positive constants $\bar{\Delta}$, $c_0$, $c_1$ such that if $\Delta_0 < \bar{\Delta}$ , then for any two initial distributions $\mu_0$ and $\nu_0$, denoting $\bar{\mu}_t$ and $\bar{\nu}_t$ to be the corresponding time marginal distributions for the time continuous interpolation of SGLD algorithm \eqref{sgld_continuous}, it holds that 
\begin{equation}\label{W1contraction}
    W_1 (\bar{\mu}_{T_k}, \bar{\nu}_{T_k}) \leq c_0 e^{-c_1T_k} W_1(\mu_0,\nu_0).
\end{equation}
\end{proposition}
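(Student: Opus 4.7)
The plan is to follow the reflection coupling strategy of Eberle, adapted to the discrete SGLD iteration as in \cite{jin2022ergodicity}. Given two initial distributions $\mu_0$ and $\nu_0$, let $\Pi_0\in \Pi(\mu_0,\nu_0)$ be any coupling and draw $(\bar X_0,\bar Y_0)\sim \Pi_0$. On each interval $[T_k,T_{k+1})$ I would run the two SGLD copies with \emph{the same} batch $\xi_k$ and couple the Gaussian increments by reflection: if $r_k:=|\bar X_{T_k}-\bar Y_{T_k}|>0$ and $e_k:=(\bar X_{T_k}-\bar Y_{T_k})/r_k$, set $Z_k'=(I-2e_ke_k^\top)Z_k$, and once the processes meet, run them synchronously. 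Writing $\Delta_k:=\bar X_{T_k}-\bar Y_{T_k}$ and using Assumption \ref{ass:reflection}(b), the one-step update gives
\begin{equation*}
\Delta_{k+1}=\Delta_k+\eta_k\bigl(b^{\xi_k}(\bar X_{T_k})-b^{\xi_k}(\bar Y_{T_k})\bigr)+2\sqrt{2\beta^{-1}\eta_k}\,(e_k\!\cdot\! Z_k)\,e_k,
\end{equation*}
so that the noise acts only along $e_k$; this one-dimensional noise is precisely what makes reflection coupling produce contraction rather than mere non-expansion.

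Next I would introduce Eberle's concave distance function $f\colon[0,\infty)\to[0,\infty)$, nondecreasing with $f(0)=0$ and $c_*\,r\le f(r)\le r$ for some $c_*\in(0,1)$, constructed (as in \cite{eberle2011reflection}) so that its second derivative pays for the bad drift contribution on the region $\{r\le R\}$ while the locally strongly convex regime of Assumption \ref{ass:reflection}(a) handles $\{r>R\}$ for a suitable $R$ depending on $R_0,\kappa_0,K,\beta$. The core step is a discrete Itô/Taylor expansion of $f(|\Delta_{k+1}|)$ conditional on $\mathcal F_{T_k}$ and on $\xi_k$: writing $r_{k+1}^2=r_k^2+2\eta_k\Delta_k\!\cdot\!(b^{\xi_k}(\bar X_{T_k})-b^{\xi_k}(\bar Y_{T_k}))+8\beta^{-1}\eta_k(e_k\!\cdot\! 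Z_k)^2+\text{cross terms}$ and Taylor-expanding $f$ to second order, one obtains a bound of the form
\begin{equation*}
\mathbb{E}\bigl[f(r_{k+1})\,\big|\,\mathcal F_{T_k}\bigr]\le (1-c_1\eta_k)\,f(r_k)+C\eta_k^2\,r_k,
\end{equation*}
where the $O(\eta_k^2 r_k)$ remainder collects all discretization errors and is controlled using Assumption \ref{ass:reflection}(b). Choosing $\bar\Delta$ small enough that the quadratic remainder is absorbed into the linear contraction yields $\mathbb{E}[f(r_{k+1})]\le(1-\tfrac12 c_1\eta_k)\mathbb{E}[f(r_k)]$.

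Iterating this estimate and using $\prod_{j=0}^{k-1}(1-\tfrac12 c_1\eta_j)\le \exp(-\tfrac12 c_1 T_k)$ gives $\mathbb{E}[f(r_k)]\le e^{-c_1 T_k/2}\mathbb{E}[f(r_0)]$. Since $f$ is comparable to the identity, taking the infimum over couplings $\Pi_0$ and rescaling produces \eqref{W1contraction} with constants $c_0=1/c_*$ and $c_1=c_1/2$. The main obstacle is the construction and calibration of $f$: one must balance the concavity (needed to dampen the bad drift in $B(0,R_0)$ via the contribution of $f''$) against the requirement $f(r)\asymp r$ (needed to recover $W_1$ at the end), and one must verify the discrete one-step inequality carefully, using the global Lipschitz bound in Assumption \ref{ass:reflection}(b) to tame both the deterministic drift difference and the quadratic remainder in the Taylor expansion. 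The batch randomness itself is not an obstacle: since the same $\xi_k$ is used in both copies, the contraction step is done conditionally on $\xi_k$ and then averaged, inheriting the uniform-in-batch Lipschitz constant $K$.
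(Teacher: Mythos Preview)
The paper does not actually prove this proposition in-text; it is stated as a claim, with the remark that ``the ergodicity could be studied using reflection coupling \cite{eberle2011reflection,jin2022ergodicity}'' and that ``detailed derivation can be found in \cite{sgldergodicity}.'' Your proposal follows precisely this route---reflection coupling \`a la Eberle, adapted to the discrete SGLD step with a common batch as in \cite{jin2022ergodicity}---so it is entirely in line with what the paper has in mind.

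One point to watch: Assumption~\ref{ass:reflection}(a) gives strong convexity of $U$ (not of each $U^{\xi}$) outside $B(0,R_0)$, whereas your drift difference in the coupled update is $b^{\xi_k}(\bar X_{T_k})-b^{\xi_k}(\bar Y_{T_k})$. To exploit (a) in the large-$r$ regime you would split $b^{\xi_k}=b+(b^{\xi_k}-b)$, use $\nabla^2 U\succeq \kappa_0 I$ for the $b$-part, and treat $(b^{\xi_k}-b)$ as a bounded perturbation via Assumption~\ref{ass:b}(d) (or absorb it using the uniform Lipschitz bound in (b) together with a slightly larger $R$). This is standard in the reflection-coupling literature, but you should make that decomposition explicit when you calibrate $f$ and choose $R$.
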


Consequently, if the step size (or learning rate) is constant $\eta_k\equiv \eta$ such that the discrete chain is time-homogeneous, then the SGLD as a discrete time Markov chain has an invariant measure $\tilde{\pi}$ by the Banach contraction mapping theorem. Then, we have the following conclusion
\begin{corollary}
Let $\eta_k\equiv \eta$. Denote by $\bar{\rho}_t$ the time marginal distribution of SGLD \eqref{sgld_continuous} at time $t$. Suppose Assumption \ref{ass:reflection} holds. Then there exist positive constants $\bar{\Delta}$, $c_0$, $c_1$ such that if $\eta \leq \bar{\Delta}$, then for any initial distribution $\bar{\rho_0}$ with finite first moment, the SGLD iteration has an invariant measure $\tilde{\pi}$. Moreover, $\bar{\rho}_t$ convergences exponentially to $\tilde{\pi}$ in terms of W-1 distance:
\begin{equation}\label{W1ergodicity}
    W_1(\bar{\rho}_{n\eta}, \tilde{\pi}) \leq c_0 e^{-cn\eta}W_1(\bar{\rho_0}, \tilde{\pi}).
\end{equation}
\end{corollary}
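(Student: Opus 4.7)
The plan is to deduce this corollary directly from the contraction estimate \eqref{W1contraction} in the preceding proposition, together with Banach's fixed point theorem. When $\eta_k\equiv \eta$, the discrete time SGLD iterates $(\bar{X}_{n\eta})_{n\ge 0}$ form a time-homogeneous Markov chain, so the one-step evolution defines a single operator $P$ on probability measures via $P\mu:=\mathrm{Law}(\bar{X}_\eta)$ when $\bar{X}_0\sim\mu$. Specializing \eqref{W1contraction} to this constant-step setting with $T_k=k\eta$ gives
\[
   W_1(P^n\mu_0,P^n\nu_0)\le c_0 e^{-c_1 n\eta}\, W_1(\mu_0,\nu_0), \quad \mu_0,\nu_0\in\mathcal{P}_1(\mathbb{R}^d),
\]
which is the single ingredient I need.

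Next, I would choose $n_0$ large enough that $c_0 e^{-c_1 n_0\eta}<1$, so that $P^{n_0}$ is a strict contraction on the complete metric space $(\mathcal{P}_1(\mathbb{R}^d),W_1)$. Before invoking Banach's theorem one has to check that $P$ (hence $P^{n_0}$) actually preserves $\mathcal{P}_1$; this follows readily from the SGLD update \eqref{sgld}, since the Lipschitz bound on $b^\xi$ in Assumption \ref{ass:reflection}(b) together with the finiteness of Gaussian moments gives $\mathbb{E}|\bar{X}_\eta|\le C(1+\mathbb{E}|\bar{X}_0|)$. Banach's fixed point theorem then produces a unique $\tilde{\pi}\in\mathcal{P}_1(\mathbb{R}^d)$ with $P^{n_0}\tilde{\pi}=\tilde{\pi}$. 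Since $P$ commutes with $P^{n_0}$, applying $P$ to both sides yields $P^{n_0}(P\tilde{\pi})=P\tilde{\pi}$, so by uniqueness $P\tilde{\pi}=\tilde{\pi}$, i.e., $\tilde{\pi}$ is invariant for the full SGLD chain.

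Finally, taking $\mu_0=\rho_0$ and $\nu_0=\tilde{\pi}$ in the specialized contraction and using $P^n\tilde{\pi}=\tilde{\pi}$ immediately gives $W_1(\bar{\rho}_{n\eta},\tilde{\pi})\le c_0 e^{-c_1 n\eta}W_1(\rho_0,\tilde{\pi})$, which is exactly \eqref{W1ergodicity} with $c=c_1$; the right hand side is finite because $\rho_0$ has finite first moment by hypothesis and $\tilde{\pi}\in\mathcal{P}_1(\mathbb{R}^d)$ by construction. There is essentially no serious obstacle once the preceding proposition is available: the only minor points are the routine verification that $P$ preserves $\mathcal{P}_1$ and the standard use of the commutativity $P\circ P^{n_0}=P^{n_0}\circ P$ to upgrade a fixed point of $P^{n_0}$ to a fixed point of $P$ itself.
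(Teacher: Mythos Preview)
Your proposal is correct and follows exactly the approach the paper indicates: the paper states just before the corollary that the invariant measure $\tilde{\pi}$ is obtained ``by the Banach contraction mapping theorem'' from the contraction \eqref{W1contraction}, and the exponential convergence is then immediate by taking $\nu_0=\tilde{\pi}$. Your additional details (the $n_0$-step trick to handle the prefactor $c_0$, the check that $P$ preserves $\mathcal{P}_1$, and the commutativity argument to pass from a fixed point of $P^{n_0}$ to one of $P$) are the standard fleshing-out of that one-line justification.
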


Equipped with the ergodicity results for SGLD and combining the results in Corollary \ref{coro:equilibrium}, we are able to estimate the distance between the target distribution and the invariant distribution of SGLD itself:
\begin{equation}
    W_1 (\tilde{\pi},\pi) \leq c\eta.
\end{equation}
This is improved compared to existing results in literature. Detailed derivation can be found in \cite{li2023geometric}.
\end{itemize}

\subsection{Other discussions}\label{subsec:moredis}
\begin{itemize}
\item \textbf{Justification of sharpness}

A natural question for our results is: is this bound we obtained really a ``sharp" one? To answer this question, we simply consider the following Ornstein-Uhlenbeck (O-U) process in $\mathbb{R}^1$, which satisfies all our assumptions.
\begin{equation}\label{OUprocess}
    dX = -Xdt + dW.
\end{equation}
Its solution has an explicit expression: 
\begin{equation}\label{eq_EM}
   X_t = e^{-t}X_0 + \int_0^t e^{s-t}dW_s. 
\end{equation}
Consider the full-batch SGLD, namely, the Euler-Maruyama scheme for \eqref{OUprocess} with constant step size $\eta$:
\begin{equation}
    \hat{X}_{T_{k+1}} = \hat{X}_{T_k} - \eta \hat{X}_{T_k} + (W_{T_{k+1}} - W_{T_k}).
\end{equation}
Suppose $X_t$, $\hat{X}_t$ are Gaussion, and $\mathbb{E}[X_0]\neq 0$, then we are able to calculate the KL-divergence without much difficulty. Indeed, by definition, at $T := k\eta$, their mean and variance are given by
\begin{equation*}
    \mu_1:=\mathbb{E}\left[X_{T}\right] = e^{-T}\mathbb{E}\left[X_0\right],
\end{equation*}
\begin{equation*}
    \sigma_1^2 := \mathrm{Var}(X_{T}) = e^{-2T}\mathrm{Var}(X_0) + \frac{1}{2}\left(1 - e^{-2T} \right),
\end{equation*}
and
\begin{equation*}
    \mu_2:=\mathbb{E}\left[\hat{X}_{T}\right]=(1-\eta)^{(T/\eta)}\mathbb{E}\left[X_0\right],
\end{equation*}
\begin{equation*}
    \sigma_2^2:=\mathrm{Var}(\hat{X}_{T})=(1-\eta)^{2(T/\eta)}\mathrm{Var}(X_0) + \frac{1}{2-\eta}\left(1 - (1-\eta)^{2(T/\eta)} \right).
\end{equation*}
Clearly, for small $\eta$,
\begin{equation*}
\begin{aligned}
    |\mu_1-\mu_2| &= \left|\mathbb{E}\left[X_0\right]\right|\left(e^{-T}-(1-\eta)^{(T/\eta)}\right) \ge \left|\mathbb{E}\left[X_0\right]\right|\left(e^{-t}-e^{-t-t\eta/2}\right)\\
    & >\left|\mathbb{E}\left[X_0\right]\right|e^{-T}\left(\frac{1}{2}\eta T-\frac{1}{8}\eta^2 T^2\right)>\frac{1}{4}\left|\mathbb{E}\left[X_0\right]\right|e^{-T} T\eta
\end{aligned}
\end{equation*}
and $\sigma_2^2 <  2\sigma_1^2$.

Also by direct calculation, the KL-divergence between two Gaussian distributions $N(\mu_1,\sigma_1^2)$, $N(\mu_2,\sigma_2^2)$ is given by
\begin{equation}
\begin{aligned}
    D_{KL}\left(N(\mu_1,\sigma_1^2)\|N(\mu_2,\sigma_2^2)\right) &=\left(\frac{1}{2}\log \frac{\sigma_2^2}{\sigma_1^2} + \frac{1}{2}\left(\frac{\sigma_1^2}{\sigma_2^2} - 1\right)\right) + \frac{(\mu_1 - \mu_2)^2}{2\sigma_2^2} \\
    & \geq 0 + \frac{(\mu_1 - \mu_2)^2}{2\sigma_2^2}. 
\end{aligned}
\end{equation}
Therefore,
\begin{equation}
     D_{KL}(\bar{\rho}_{T}\|\rho_{T}) \geq \frac{(1/16)\left|\mathbb{E}\left[X_0\right] \right|^2 e^{-2T}T^2}{4\sigma_1(T)^2}\,\eta^2.
\end{equation}
This then indicates that our $\eta^2$ bound is a sharp one.

\item \textbf{The role of consistency of  the random batch technique}

As a variant of ULA, the key difference of SGLD is the introduction of the mini-batch technique, which reduces the computational cost. To understand the error and mechanism for the sharp error estimate, let us take $\eta_k \equiv \eta$. 

For each single step, the error of the drift is $O(1)$. 
The methods involving random batches (such as the SGD and the random batch methods) have a strong error 
like (see for example \cite{jin2020random})
\[
\sqrt{\E\|X-\bar{X}\|^2} \sim \sqrt{(e^{k\eta}-1)\eta}.
\]
Clearly, the strong error decays like $\sqrt{\eta}$, which is actually sharp. As mentioned in \cite{jin2020random},
the averaging effect in time ensures a convergence like "law of large numbers" in time so the convergence rate is $\sqrt{\eta}$. 
The strong error estimate can imply that 
\[
(\E_{\xi} W_2^2(\rho_t^{\b{\xi}}, \rho_t))^{1/2} \sim \sqrt{(e^{k\eta}-1)\eta}.
\]
This indicates that the fluctuation of the trajectory and $\rho_t^{\b{\xi}}$ is really like $\sqrt{\eta}$. Hence, the existing error estimates of SGLD based on the trajectory estimates can achieve $\sqrt{\eta}$ at most. 
Moreover, such an estimate based on trajectory estimate can only give an $O(\eta)$ one step error of the SGLD under Wasserstein distance (the $O(\sqrt{\eta})$ global error is due to the time averaging over multiple intervals).

The most important property of the random batch is its consistency:
\begin{equation}\label{eq:consistencyrbm}
    \mathbb{E}_{\xi}\left[b^{\xi}(x) \right] = b(x),
\end{equation}
which can be interpreted as an unbiased approximation of the original drift function, as has been mentioned in \eqref{eq:unbiasedproperty}. 
Consider one step. Starting from a common $\rho_0$, after one step, formally each $\rho_{\eta}^{\b{\xi}}$ has the following expression
\[
\rho_{\eta}=\rho_0+\eta \mathcal{L}_{\xi}^*\rho_0+O(\eta^2),
\]
where $\mathcal{L}_{\xi}$ is the generator corresponding to batch $\xi$, while
\[
\rho_{\eta}=\rho_0+\eta\mathcal{L}^*\rho_0+O(\eta^2).
\]
Hence, the error is like $O(\eta)$. Since the error of the drift can cancel if one takes average across batches and thus one is motivated to take the average of all possible $\rho_t^{\b{\xi}}$:
\begin{equation}
    \bar{\rho}_t = \mathbb{E}_{\xi} \left[\rho_t^{\b{\xi}}\right],
\end{equation}
which is the true law of the SGLD. The obtained $\bar{\rho}_t$ is then expected to have $O(\eta^2)$ local error compared to $\rho_{t_{n+1}}$. 
Of course, a direct proof using Wasserstein distances is difficult, so we utilize the KL divergence to accomplish the proof, motivated by the recent work \cite{mou2019improved}. Anyway,  this intuition forms the foundation of our proof and is reflected in how we treat the terms in \eqref{dtKL}.In fact, as discussed after \eqref{dtKL}, the key step is to rearrange
\[
\mathbb{E}[b^{\xi_k}\bar{\rho}_t^{\xi_k}-b\bar{\rho}_t]
=\E(b^{\xi_k}-b)(\bar{\rho}_t^{\xi_k}-\bar{\rho}_t)
\]
in \eqref{eq4_9}.

At last, we remark that though $\bar{\rho}_t$ is close to $\rho_t$, in practice we do not repeat the experiment for many times and take the average. Instead, we only generate a sequence of the batches $(\xi^0, \cdots, \xi^k, \cdots)$ and generate the samples. Even though we use a single $b^{\xi_k}$ each step, the empirical distribution by the generated samples converges weakly to the invariant measure with error $O(\eta)$ under Wasserstein distance to the interested distribution, due to the ergodicity of SGLD. 
Hence, one does not have to run SGLD for many experiments to approximate $\bar{\rho}_t$ and further to approximate the invariant distribution with error $O(\eta)$.

\item \textbf{Dependence on the dimensionality}

The linear scaling with respect to $d$ arising in \eqref{eq:klbatch} and \eqref{eqq319} is quite natural for the entropy and Fisher information. In fact, if one considers cases where the dynamics in difference dimensions are decoupled so that $\rho_t(x)=\prod_{i=1}^d\rho_t^{(i)}(x_i)$, the dependence on dimension in the entropy and Fisher information would be linear.  

As we have mentioned, the linear dependence in the our error bounds largely comes from with the factor $d^{-1/2}$ in the Lipschitz constant in Assumption \ref{ass:b}, and we have justified this below Assumption \ref{ass:b}.  A slight discrepancy of this intuition comes from $D_{KL}(\rho_0\|\pi)$, which by by condition (a) in Assumption \ref{ass:pi} satisfies
\[
D_{KL}(\rho_0\|\pi) \le \log \lambda.
\]
This independence of the dimension is a consequence of the strong assumption Assumption \ref{ass:pi}. However, if we use a weaker assumption like $\rho_0/\pi\le \lambda^d$, there would be dimension dependence in the constant for log-Sobolev inequality of the Hooley-Stroock perturbation lemma. This may indicate that the Hooley-Stroock perturbation lemma is not sharp regarding the scalability in $d$.

\end{itemize}

\section{Acknowledgement}
This work is financially supported by the National Key R\&D Program of China, Project Number 2021YFA1002800 and Project Number 2020YFA0712000. The work of L. Li was partially supported by NSFC 11901389 and 12031013,  Shanghai Municipal Science and Technology Major Project 2021SHZDZX0102, Shanghai Science and Technology Commission Grant No. 21JC1402900, and Shanghai Sailing Program 19YF1421300.

\begin{appendix}
\section{Omitted details in Section \ref{sec:mainresults} and \ref{sec:localpf}}\label{sgldpf}
In this section, we prove the details omitted in Section \ref{sec:mainresults} and \ref{sec:localpf}.  Lemma \ref{lmmrt}, \ref{integralbyparts} have been proved in \cite{mou2019improved}.

\subsection{Proof of Proposition \ref{thm:moment}}\label{omit_momentcontrol}
The method for bounding the $p$-th  moment of fixed-batch SGLD is based on direct It\^{o} calculation and basic inequalities. In the followings, we denote $\mathcal{F}_{\xi}$  the $\sigma$-algebra generated by $\xi_k$ for all $k \in \mathbb{N}$.
\begin{proof}[Proof of Proposition \ref{thm:moment}:]
By definition, for fixed batch sequence $\b{\xi}$, 
\begin{equation*}
    d\bar{X}_t = b^{\xi_k}(\bar{X}_{T_k})dt + \sqrt{2\beta^{-1}}dW, \quad t \in \left[T_k, T_{k+1} \right),
\end{equation*}
where we recall $T_k = \sum_{i=0}^{k-1} \eta_i$.
For $p \geq 2$, by It\^{o}'s formula, we have
\begin{multline*} 
    d|\bar{X}_t|^p  = p|\bar{X}_t|^{p-2} \bar{X}_t \cdot \left( b^{\xi}(\bar{X}_{T_k})dt+\sqrt{2\beta^{-1}}dW\right)\\
    +\beta^{-1}p|\bar{X}_t|^{p-2}\left(I_d + (p-2) \frac{\bar{X}_t\otimes\bar{X}_t}{|\bar{X}_t|^2} \right) : I_d dt.
\end{multline*}
So
\begin{multline}\label{eq33}
    \frac{d}{dt}\mathbb{E}\left[|\bar{X}_t|^p\Big|\mathcal{F}_{\xi}\right]\leq p\mathbb{E}\left[|\bar{X}_t|^{p-2} \bar{X}_t \cdot b^{\xi_k}(\bar{X}_t) \Big|\mathcal{F}_{\xi} \right]\\
    + p\mathbb{E}\left[|\bar{X}_t|^{p-2} \bar{X}_t \cdot \left( b^{\xi_k}(\bar{X}_{T_k}) - b^{\xi_k}(\bar{X}_t)\right) \Big|\mathcal{F}_{\xi} \right] + \beta^{-1}p(p-1)d\mathbb{E}\left[|\bar{X}_t|^{p-2}\Big|\mathcal{F}_{\xi}\right].
\end{multline}
Using the dissipation condition in Assumption \ref{ass:b}, we can control the first term above, namely,
\begin{equation*}
    p\mathbb{E}\left[|\bar{X}_t|^{p-2} \bar{X}_t \cdot b^{\xi_k}(\bar{X}_t) \Big|\mathcal{F}_{\xi}\right] \lesssim - p \mathbb{E}\left[|\bar{X}_t|^p\Big|\mathcal{F}_{\xi}\right] +  p \mathbb{E}\left[|\bar{X}_t|^{p-2}\Big|\mathcal{F}_{\xi}\right].
\end{equation*}
By Young's inequality, for any $\delta > 0$, 
\begin{equation*}
    \mathbb{E}\left[|\bar{X}_t|^{p-2}\Big|\mathcal{F}_{\xi}\right] \leq \delta d^{-1} \frac{p-2}{p} \mathbb{E}\left[|\bar{X}_t|^p\Big|\mathcal{F}_{\xi}\right] + \delta^{-\frac{p-2}{2}} d^{\frac{p-2}{2}}\frac{2}{p}
\end{equation*}
For the second term, direct estimate gives
\begin{equation*}
    \begin{aligned}
     & \quad \mathbb{E}\left[|\bar{X}_t|^{p-2} \bar{X}_t \cdot \left( b^{\xi_k}(\bar{X}_{T_k}) - b^{\xi_k}(\bar{X}_t)\right) \Big|\mathcal{F}_{\xi} \right] \leq c \mathbb{E} \left[|\bar{X}_t|^{p-1} |\bar{X}_t - \bar{X}_{T_k}| \Big|\mathcal{F}_{\xi} \right]\\
     & \lesssim (t-T_k)\mathbb{E}\left[|b^{\xi_k}(\bar{X}_{T_k})| |\bar{X}_t|^{p-1}  \Big|\mathcal{F}_{\xi}\right] + \sqrt{2\beta^{-1}}\mathbb{E}\left[ |\bar{X}_t|^{p-1}   |\int_{T_k}^t dW_s| \Big|\mathcal{F}_{\xi}\right]\\
     \end{aligned}
\end{equation*}
Note that $b(\cdot)$ grows linearly only. Then, by Young's inequality, together with the fact that $t-T_k \le \eta_k$, one has
\begin{multline*}
\mathbb{E}\left[|\bar{X}_t|^{p-2} \bar{X}_t \cdot \left( b^{\xi_k}(\bar{X}_{T_k}) - b^{\xi_k}(\bar{X}_t)\right) \Big|\mathcal{F}_{\xi}\right]\\
\lesssim (\eta_k^{p/(2(p-1))}+\eta_k)\mathbb{E}\left[|\bar{X}_t|^{p}\Big|\mathcal{F}_{\xi}\right]+\eta_k \mathbb{E} \left[|\bar{X}_{T_k}|^p\Big|\mathcal{F}_{\xi}\right]  + d^{\frac{p}{2}}\left(1 + \beta^{-\frac{p}{2}} \right).
\end{multline*}
Hence, for $t \in \left[T_k, T_{k+1} \right)$,
\begin{equation}
    \frac{d}{dt} \mathbb{E} \left[|\bar{X}_t|^p\Big|\mathcal{F}_{\xi}\right] \leq -c_1  \mathbb{E}\left[|\bar{X}_t|^p\Big|\mathcal{F}_{\xi}\right] + c_3  \eta_k \mathbb{E} \left[|\bar{X}_{T_k}|^p\Big|\mathcal{F}_{\xi}\right] + c_2d^{\frac{p}{2}}\left(1 + \beta^{-\frac{p}{2}} \right).
\end{equation}
Here, $c_1$, $c_2$, $c_3$ are positive constants independent of $t$, $d$ and $\b{\xi}$ but possibly dependent of $p$. 

We claim that since there exists $c_1'>0$ such that
\[
-c_1+c_3\eta_k \le -c_1',\quad  \forall k,
\]
the moment is uniformly bounded.

In fact, applying the Gr\"onwall inequality,
\[
\mathbb{E} \left[|\bar{X}_t|^p\Big|\mathcal{F}_{\xi}\right]
\le e^{-c_1(t-T_k)}\mathbb{E} \left[|\bar{X}_{T_k}|^p\Big|\mathcal{F}_{\xi}\right]
+\int_{T_k}^t e^{-c_1(t-s)}\left(c_3  \eta_k \mathbb{E} \left[|\bar{X}_{T_k}|^p\Big|\mathcal{F}_{\xi}\right] + c_2d^{\frac{p}{2}}\left(1 + \beta^{-\frac{p}{2}} \right)\right)ds.
\]
Defining
\[
u^{\b{\xi}}(t):=\sup_{0\le s\le t}\mathbb{E} \left[|\bar{X}_s|^p\Big|\mathcal{F}_{\xi}\right],
\]
one thus finds 
\begin{gather*}
u^{\b{\xi}}(t)\le  e^{-c_1(t-T_k)}u^{\b{\xi}}(T_k)
+\int_{T_k}^t e^{-c_1(t-s)}[c_3  \eta_k u^{\b{\xi}}(s)  + c_2d^{\frac{p}{2}}\left(1 + \beta^{-\frac{p}{2}} \right)]ds.
\end{gather*}
Then, it is not hard to find that $u^{\b{\xi}}(t)\le v^{\b{\xi}}(t)$
where
\begin{gather}\label{eq:compare}
\frac{d}{dt}v^{\b{\xi}}(t)= -c_1' v^{\b{\xi}}(t)+c_2d^{\frac{p}{2}}\left(1 + \beta^{-\frac{p}{2}} \right), \quad t\in[T_k, T_{k+1}), ~v^{\b{\xi}}(T_k)\ge u^{\b{\xi}}(T_k).
\end{gather}
\begin{remark}
One may use an intermediate function satisfying the following to justify the above comparison principle:
\[
\frac{d}{dt}\tilde{v}^{\b{\xi}}(t)=-c_1\tilde{v}^{\b{\xi}}(t)+c_3\eta_k \tilde{v}^{\b{\xi}}(t)+c_2d^{\frac{p}{2}}\left(1 + \beta^{-\frac{p}{2}} \right).
\]
\end{remark}
Since \eqref{eq:compare} holds for each time interval and the moment is continuous in time so one can concatenate them and
conclude by induction that $u^{\b{\xi}}(t)\le v^{\b{\xi}}(t)$ with
\[
\frac{d}{dt}v^{\b{\xi}}(t)= -c_1' v^{\b{\xi}}(t)+c_2d^{\frac{p}{2}}\left(1 + \beta^{-\frac{p}{2}} \right),\quad v(0)=u(0)=\E|\bar{X}_0|^p.
\]
Hence,
\begin{equation}
    \mathbb{E}\left[|\bar{X}_t|^p\Big|\mathcal{F}_{\xi}\right] \leq c_pd^{\frac{p}{2}}\left(1 + \beta^{-\frac{p}{2}} \right),\quad \forall t>0,
\end{equation}
where $c_p$ is a positive constant independent of $t$, $d$ and $\b{\xi}$ but possibly dependent of $p$.

For the exponential moment $\mathbb{E}\left[ e^{\alpha |X|^p}\right]$ with small $\alpha$, after similar It\^{o}'s calculation, we are able to obtain
\begin{equation}
    \sup_{t>0} \mathbb{E}\left[e^{\alpha|\bar{X}_t|^p}\Big|\mathcal{F}_{\xi}\right] < + \infty. 
\end{equation}
This then ends the proof.
\end{proof}

\subsection{Proof of Lemma \ref{Mfisher}}\label{omit_Mfisher}
\begin{proof}[Proof of Lemma \ref{Mfisher}:]

We first claim the followings:
\begin{itemize}
\item There exist positive constants $c_0$, $c_1 $ independent of the time $t$ such that
\begin{equation}\label{claimKLFisher}
    \frac{d}{dt} D_{KL}(\rho^{\b{\xi}}_t\|\pi) \leq -c_0 \beta^{-1} \mathcal{I}(\rho^{\b{\xi}}_t) + c_1d(\beta+1).
\end{equation}
\item There is a positive constant $c_2 $ independent of the time $t$ ($c_2$ is dependent on $D_{KL}(\rho_0\|\pi)$, which is a dimension-free positive constant due to Assumption \ref{ass:pi}) such that
\begin{equation}\label{claim2KLFisher}
    D_{KL}(\rho^{\b{\xi}}_t\|\pi) \leq c_2d(\beta+1)\beta^{-1}\kappa(\beta)^{-1}.
\end{equation}
\end{itemize}

Indeed, for the first claim \eqref{claimKLFisher}, using Fokker-Planck equation \eqref{FPhat} for $\rho^{\b{\xi}}_t$, we can directly calculate the following:
\begin{equation*}
    \begin{aligned}
    \frac{d}{dt}D_{KL}(\rho^{\b{\xi}}_t\|\pi) &= \int \left(1 + \log \frac{\rho^{\b{\xi}}_t}{\pi} \right)\partial_t \rho^{\b{\xi}}_t dx\\
    &= \int \left(\nabla \log \rho^{\b{\xi}}_t - \beta b\right)\cdot \left(\rho^{\b{\xi}}_t \hat{b}_t - \beta^{-1}\nabla \rho^{\b{\xi}}_t \right)dx,
    \end{aligned}
\end{equation*}
where we recall $b=-\nabla U=\beta^{-1}\nabla\log \pi$.
By Young's inequality, we have 
\begin{gather}\label{eq221}
    \begin{split}
    \frac{d}{dt}D_{KL}(\rho^{\b{\xi}}_t\|\pi) &\leq \left(\frac{\beta^{-1}}{4} \int |\nabla \log \rho^{\b{\xi}}_t|^2 \rho^{\b{\xi}}_t dx + \beta \int |\hat{b}_t|^2 \rho^{\b{\xi}}_t dx\right) -\beta^{-1}\int |\nabla \log \rho^{\b{\xi}}_t|^2 \rho^{\b{\xi}}_t dx\\
    & \quad + \frac{\beta}{2}\int (|b|^2+|\hat{b}_t|^2)\rho_t^{\b{\xi}}dx  + \left(\beta\int |b|^2\rho_t^{\b{\xi}}dx + \frac{\beta^{-1}}{4} \int |\nabla \log \rho^{\b{\xi}}_t|^2 \rho^{\b{\xi}}_t dx \right)\\
    &=-\frac{1}{2}\beta^{-1}\mathcal{I}(\rho^{\b{\xi}}_t) + \frac{3\beta}{2}\left(\mathbb{E}\left[|\hat{b}_t(\bar{X}_t)|^2\Big| \mathcal{F}_{\xi}\right] + \mathbb{E}\left[|b(\bar{X}_t)|^2\Big| \mathcal{F}_{\xi}\right]\right).
    \end{split}
\end{gather}
Using the Lipschitz assumption, the result for moment control Proposition \ref{thm:moment}, and Jensen's inequality, since $\eta_k\leq \Delta_0$,  we know that for $t\in [T_k,T_{k+1})$, 
\begin{equation}\label{eqa21}
    \begin{aligned}
    \mathbb{E}\left[|\bar{b}_t(\bar{X}_t)|^2\Big|\mathcal{F}_{\xi}\right] &= \int \rho^{\b{\xi}}_t(x) \left(\mathbb{E}\left[b(\bar{X}_{T_k})|\bar{X}_t=x,\mathcal{F}_{\xi}\right] \right)^2 dx\\
    &\leq \mathbb{E}\left[|b(\bar{X}_{T_k})|^2\Big|\mathcal{F}_{\xi}\right]\leq \mathbb{E}\left[\left(|b(0)| + |\bar{X}_{T_k}|\right)^2\Big|\mathcal{F}_{\xi}\right]\leq \tilde{c}_1(1 + \beta^{-1})d,\\
    \end{aligned}
\end{equation}
where $c_1$ is a time-independent positive constant according to Proposition \ref{thm:moment}. Hence by \eqref{eq221} and \eqref{eqa21},  the first claim \eqref{claimKLFisher} holds.

For the second claim \eqref{claim2KLFisher}, we first observe that, we can control the negative Fisher information $-\mathcal{I}(\bar{\rho}^{\b{\xi}}_t)$ by the negative relative information $-\mathcal{I}(\rho^{\b{\xi}}_t|\pi) := -\int |\nabla \log \frac{\rho^{\b{\xi}}_t}{\pi}|^2 \rho^{\b{\xi}}_t dx$ via Young's inequality, namely,
\begin{equation}
    \mathcal{I}(\rho^{\b{\xi}}_t|\pi) = \int |\nabla \log \frac{\rho^{\b{\xi}}_t}{\pi}|^2 \rho^{\b{\xi}}_t dx \leq 2\mathcal{I}(\rho^{\b{\xi}}_t) + 2\beta^2 \mathbb{E}\left[|b(\bar{X}_t)|^2\Big|\mathcal{F}_{\xi}\right].
\end{equation}
So we have
\begin{equation}\label{eq224}
    -\mathcal{I}(\rho^{\b{\xi}}_t) \leq -\frac{1}{2}\mathcal{I}(\rho^{\b{\xi}}_t|\pi) + \beta^2\mathbb{E}\left[|b(\bar{X}_t)|^2\Big|\mathcal{F}_{\xi}\right].
\end{equation}
Then, combining  \eqref{eq221}, \eqref{eqa21} and \eqref{eq224}, we have 
\begin{equation}
\begin{aligned}
    \frac{d}{dt}D_{KL}(\rho^{\b{\xi}}_t\|\pi) &\leq -\frac{1}{4}\beta^{-1}\mathcal{I}(\rho^{\b{\xi}}_t|\pi) + 2\beta\mathbb{E}\left[|b(\bar{X}_t)|^2\Big| \mathcal{F}_{\xi}\right] + \frac{3\beta}{2}\mathbb{E}\left[|\hat{b}_t(\bar{X}_t)|^2\Big|\mathcal{F}_{\xi}\right]\\
    &\leq -\frac{1}{4}\beta^{-1}\mathcal{I}(\rho^{\b{\xi}}_t|\pi) + c_1'd(\beta+1),
\end{aligned}
\end{equation}
where $c_1'$ is a positive constant. Since $\pi$ satisfies a Log-Sobolev inequality with constant $\kappa(\beta)$, we have
\begin{equation}
     \frac{d}{dt}D_{KL}(\rho^{\b{\xi}}_t\|\pi) \leq -\frac{1}{4\beta \kappa(\beta)} D_{KL}(\rho^{\b{\xi}}_t\|\pi) + c_1'd(\beta+1).
\end{equation}
By Gr\"onwall's inequality, we have
\begin{equation}
\begin{aligned}
    D_{KL}(\rho^{\b{\xi}}_t\|\pi) &\leq e^{-4\beta \kappa(\beta) t} D_{KL}(\rho_0\|\pi) + c_1' d(\beta+1)(4\beta \kappa(\beta))^{-1} (1 - e^{-4\beta \kappa(\beta) t})\\
    &\leq c_2d(\beta+1)\beta^{-1}\kappa(\beta)^{-1},
\end{aligned}
\end{equation}
Hence the second claim \eqref{claim2KLFisher} holds.

Now, equipped with the two claims \eqref{claimKLFisher} and \eqref{claim2KLFisher}, using integration by parts, we know that for all differential, nonnegative, non-increasing $f$, for any $T>0$,

\begin{equation*}
    \begin{aligned}
    &\quad \int_0^T e^{-A_0(T-s)} f(s) \mathcal{I}(\rho^{\b{\xi}}_s) ds\\ &\leq -\tilde{c}_0 \beta \int_0^T e^{-A_0(T-s)} f(s) \left(\frac{d}{ds}D_{KL}(\rho^{\b{\xi}}_s\|\pi) \right)ds + \tilde{c}_1 \beta(\beta + 1) d\int_0^T e^{-A_0(T-s)} f(s) ds\\
    &= \tilde{c}_1 \beta(\beta+1) d\int_0^T e^{-A_0(T-s)} f(s) ds -\tilde{c}_0 \beta f(T)D_{KL}(\rho^{\b{\xi}}_T\|\pi) + \tilde{c}_0 \beta e^{-A_0 T} f(0) D_{KL}(\rho_0\|\pi)\\
    &\quad + \tilde{c}_0 \beta \int_0^T e^{-A_0(T-s)}f'(s)D_{KL}(\rho^{\b{\xi}}_s\|\pi)ds + \tilde{c}_0A_0 \beta \int_0^T e^{-A_0(T-s)} f(s) D_{KL}(\rho^{\b{\xi}}_s\|\pi) ds\\
    &\leq \tilde{c}_1 \beta (\beta+1)d\int_0^T e^{-A_0(T-s)} f(s) ds + 0 + \tilde{c}_0\beta e^{-A_0T}f(0)D_{KL}(\rho_0\|\pi) + 0\\
    &\quad + \tilde{c}_0c_2 A_0 (\beta + 1)\beta^{-1}\kappa(\beta)^{-1}d \int_0^T e^{-A_0(T-s)} f(s) ds\\
    & :=  M_1 \beta D_{KL}(\rho_0\|\pi)f(0) e^{-A_0T}+ M_2d(\beta +1) (A_0\beta^{-1}\kappa(\beta)^{-1}+\beta)\int_0^T e^{-A_0(T-s)}f(s)ds.
    \end{aligned}
\end{equation*}
Here, $\tilde{c}_0$, $\tilde{c}_1$, $M_1$, $M_2$ are positive constants independent of $\xi$, $d$ and $\beta$. The last inequality is because the KL-divergence is non-negative, and $f$ is nonnegative and non-increasing.

Now, we have already obtained the desired estimation
\begin{multline}\label{eq229}
    \int_0^T e^{-A_0(T-s)}f(s)\mathcal{I}(\rho^{\b{\xi}}_s)ds
    \leq  M_1 \beta D_{KL}(\rho_0\|\pi)f(0) e^{-A_0T}\\
    + M_2d(\beta +1) (A_0\beta^{-1}\kappa(\beta)^{-1}+\beta)\int_0^T e^{-A_0(T-s)}f(s)ds
\end{multline}
for all differential, non-increasing, nonnegative function $f$. Since piecewise-constant functions can be approximated by differential functions, we know that \eqref{eq229} holds for all non-increasing, nonnegative, piecewise-constant function $f$. This then ends the proof.

\end{proof}

\subsection{Basics on path measure}\label{sec:pathmeasure}
In this section, we review some basics of path measure. Consider the following two SDEs in $\mathbb{R}^d$ with different drifts but the same diffusion $\sigma$:
\begin{equation}
\begin{split}
&  dX^{(1)} = b^{(1)}(X^{(1)}, x_0) dt + \sigma \cdot dW,\\
&  dX^{(2)} = b^{(2)}(X^{(2)}, x_0) dt + \sigma \cdot dW,
\end{split}
\quad X^{(1)}_0 =X^{(2)}_0 = x_0.
\end{equation}
Here, $W$ is a standard Brownian motion under the probability $\mathbb{P}$ (the same for the two SDEs), and $x_0\sim \mu_0$ is a common, but random, initial position. We allow the drifts to be possibly dependent on the initial position $x_0$ for our application.  For fixed time interval $[0,T]$, the two processes $X^{(1)}$ and $X^{(2)}$ naturally induce two probability measures in the path space $\mathcal{X} := C([0,T];\mathbb{R}^d)$, denoted by $P^{(1)}$ and $P^{(2)}$, respectively. To be more specific, for $[s,t] \subset [0,T]$, $A \in \mathcal{B}(\mathbb{R}^d)$, $P^{(i)}([s,t]\times A) = \mathbb{P}(X_{\tau}^{(i)} \in A, \tau \in [s,t])$, $i=1,2$. It is also obvious that the two probability measures $P^{(1)}$, $P^{(2)}$ are mutually absolutely continuous, so we are able to define the Radon-Nikodym derivative $\frac{dP^{(1)}}{dP^{(2)}} \in L^1(P^{(2)},\mathcal{X})$.

To obtain the formula for $\frac{dP^{(1)}}{dP^{(2)}} \in L^1(P^{(2)},\mathcal{X})$, we recall that the Girsanov transform \cite{girsanov1960transforming,durrett2018stochastic, dalalyan2012sparse,gao2023optimal} asserts that under the probability measure $\mathbb{Q}$ satisfying
\begin{multline*} 
    \frac{d\mathbb{Q}}{d\mathbb{P}}(X^{(2)}(\omega)) = \exp\Big(\int_0^T  (b^{(1)}  - b^{(2)})(X_s^{(2)}, x_0) \cdot G^{-1}\sigma \cdot  dW_s\\
    - \frac{1}{2} \int_0^T \left|  (b^{(1)} - b^{(2)}) (X_s^{(2)}, x_0) \cdot G^{-1}\sigma \right|^2 ds\Big),
\end{multline*}
with the matrix $G$ is defined by 
\begin{equation*}
    G := \left(\sigma \sigma^T\right)^{-1},
\end{equation*}
the law of $X^{(2)}$  is the same as the law of $X^{(1)}$ under $\mathbb{P}$. In other words, for any Borel measurable set $B\subset \mathcal{X}$,
\[
\E_{\mathbb{P}}[1_B(X^{(1)})]
=\E_{\mathbb{Q}}[1_B(X^{(2)})]
=\E_{\mathbb{P}}\left[1_B(X^{(2)})\frac{d\mathbb{Q}}{d\mathbb{P}}\right].
\]
Note that the expression of $\frac{d\mathbb{Q}}{d\mathbb{P}}(\omega)=g(X^{(2)}(\omega))$ where ($X_0:=X(0)$)
\begin{multline*}
    g(X) = \exp\Big(\int_0^T  \left(b^{(1)} - b^{(2)}) (X, X_0)\right) \cdot G^{-1} \cdot dX \\
    + \frac{1}{2} \int_0^T \left(b^{(2)} \cdot G^{-1} \cdot b^{(2)} - b^{(1)} \cdot G^{-1} \cdot b^{(1)} \right)(X, X_0) ds\Big),
\end{multline*}
Since $P^{(1)}
=(X^{(1)})_{\#}\mathbb{P}$ and $P^{(2)}
=(X^{(2)})_{\#}\mathbb{P}$ are the laws of $X^{(1)}$ and $X^{(2)}$ respectively, then one has
\[
P^{(1)}(B)=\mathbb{E}_{X\sim P^{(2)}}\left[\textbf{1}_B(X) g(X)\right]
\]
It follows that the Radon-Nikodym derivative is 
\[
\frac{dP^{(1)}}{dP^{(2)}}(X) = g(X).
\]
Hence, since $dX^{(2)} = b^{(2)}(X^{(2)},x_0)dt + \sigma \cdot dW$, we have derived that 
\begin{multline}\label{Girsanov528}
    \frac{dP^{(1)}}{dP^{(2)}}(X^{(2)}(\omega)) =\frac{d\mathbb{Q}}{d\mathbb{P}}(\omega)= \exp\Big(\int_0^T (b^{(1)} - b^{(2)})(X_s^{(2)}, x_0) \cdot G^{-1}\sigma \cdot  dW_s \\
    - \frac{1}{2} \int_0^T \left| (b^{(1)} - b^{(2)}) (X_s^{(2)}, x_0) \cdot G^{-1}\sigma \right|^2 ds\Big),
\end{multline}
which is a martingale under $\mathbb{P}$ and its natural filtration defined by $\mathcal{F}^{(2)}_t := \sigma(X_s^{(2)}, s\leq t)$, $t\in [0,T]$. Note that \eqref{Girsanov528} will be used in our proof.

Moreover, we can view the process $X^{(i)}$ as an identical mapping: $X^{(i)} = (X^{(i)}_t)_{0\leq t \leq T}$: $\Omega \rightarrow \mathcal{X}$. For fixed $t\in[0,T]$, $X_t^{(i)}$ can be viewed as a measurable mapping from $\Omega$ to $\mathbb{R}^d$ and one in fact has $X_t^{(i)}=\omega_t\circ X^{(i)}\in\mathbb{R}^d$, where $\omega_t: \mathcal{X}\to \mathbb{R}^d$ is the time marginal defined by $\omega_t(\gamma)=\gamma_t$.  Then the law of the solution $X_t^{(i)}$ at time $t$, namely, the time marginal of $P^{(i)}$, is the push forward measures $P_t^{(i)} := (X^{(i)}_t)_{\#}\mathbb{P}=(\omega_t)_{\#}P^{(i)}$, $\forall t \in [0,T]$. This means $P_t^{(i)}$ is a probability measure in $\mathbb{R}^d$, satisfying
\begin{equation}
    P_t^{(i)}(A) = \mathbb{P}(X_t^{(i)} \in A), \quad \forall A \in \mathcal{B}(\mathbb{R}^d), \quad i=1,2.
\end{equation}
Clearly, at any time $t$, $P_t^{(1)}$ and $P_t^{(2)}$ are mutually absolutely continuous, the Radon-Nikodym derivative $\frac{dP_t^{(1)}}{dP_t^{(2)}} \in L^1(P_t^{(2)},\mathbb{R}^d)$ is well defined. 

The following Lemma describes the relationship between the two Radon-Nikodym derivatives (of path measures and of push forward measures):

\begin{lemma}\label{lmm:generalpathmeasure}
Let $Q_1$, $Q_2$ be two probability distributions on $\mathcal{X}$, and $Q_2$ is absolutely continuous with respect to $Q_1$. Let $\phi:\mathcal{X}\rightarrow \mathbb{R}^d$ be a measurable mapping, and consider the push forward measure $\phi_{\#}Q_1$ and $\phi_{\#}Q_2$, denoted by $Q_{1,\phi}$ and $Q_{2,\phi}$, respectively. Then the Randon-Nikodym derivatives $\frac{dQ_{1,\phi}}{dQ_{2,\phi}}\in L^1(dQ_{2,\phi}, \mathbb{R}^d)$, $\frac{dQ_1}{dQ_2} \in L^1(dQ_2, \mathcal{X})$ are well-defined, and 
\begin{equation}
    \frac{dQ_{1,\phi}}{dQ_{2,\phi}}(x) = \mathbb{E}_{X\sim Q_2}\left[\frac{dQ_1}{dQ_2} | \phi(X)=x\right],\quad Q_2-a.e.
\end{equation}
\end{lemma}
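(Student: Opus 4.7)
The plan is to verify the identity by testing against indicators of Borel sets in $\mathbb{R}^d$, which is exactly the defining property of a Radon--Nikodym derivative, and then appeal to the tower property of conditional expectation to rewrite the relevant integral in the required form. The argument is essentially a change-of-variables combined with conditioning, and no deep analysis is needed.

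First, I would check the preliminary measure-theoretic points. Absolute continuity of $Q_1$ with respect to $Q_2$ on $\mathcal{X}$ immediately transfers to the push-forwards: if $Q_{2,\phi}(A)=0$ for some $A\in\mathcal{B}(\mathbb{R}^d)$, then $Q_2(\phi^{-1}(A))=0$, hence $Q_1(\phi^{-1}(A))=0$, hence $Q_{1,\phi}(A)=0$. Thus $Q_{1,\phi}\ll Q_{2,\phi}$ and both Radon--Nikodym derivatives in the statement are well defined. (As the surrounding Girsanov context makes clear, the two path measures are in fact mutually absolutely continuous, so there is no ambiguity about which derivative is meant.)

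Next, the main computation. For an arbitrary $A\in\mathcal{B}(\mathbb{R}^d)$, I would compute
\begin{equation*}
Q_{1,\phi}(A)=Q_1(\phi^{-1}(A))=\int_{\phi^{-1}(A)}\frac{dQ_1}{dQ_2}(X)\,dQ_2(X)=\mathbb{E}_{X\sim Q_2}\!\left[\mathbf{1}_A(\phi(X))\,\frac{dQ_1}{dQ_2}(X)\right].
\end{equation*}
Since $\mathbf{1}_A(\phi(X))$ is $\sigma(\phi(X))$-measurable, the tower property of conditional expectation gives
\begin{equation*}
\mathbb{E}_{X\sim Q_2}\!\left[\mathbf{1}_A(\phi(X))\,\frac{dQ_1}{dQ_2}\right]=\mathbb{E}_{X\sim Q_2}\!\left[\mathbf{1}_A(\phi(X))\,\mathbb{E}_{X\sim Q_2}\!\left[\frac{dQ_1}{dQ_2}\,\Big|\,\phi(X)\right]\right],
\end{equation*}
and the right-hand side, by the change-of-variables formula for push-forwards, equals
\begin{equation*}
\int_{A}\mathbb{E}_{X\sim Q_2}\!\left[\frac{dQ_1}{dQ_2}\,\Big|\,\phi(X)=x\right]dQ_{2,\phi}(x).
\end{equation*}
Since $A$ was arbitrary, uniqueness of the Radon--Nikodym derivative (up to $Q_{2,\phi}$-null sets) identifies $\mathbb{E}_{X\sim Q_2}[\frac{dQ_1}{dQ_2}\mid\phi(X)=x]$ with $\frac{dQ_{1,\phi}}{dQ_{2,\phi}}(x)$, which is the claim.

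There is no real obstacle here; the only point requiring minor care is the distinction between the conditional expectation viewed as a $\sigma(\phi(X))$-measurable function on $\mathcal{X}$ and its realization as a function on $\mathbb{R}^d$ via the Doob--Dynkin factorization, which is what lets us rewrite $\mathbb{E}[\cdot\mid\phi(X)]$ as $g(\phi(X))$ with $g(x)=\mathbb{E}[\cdot\mid\phi(X)=x]$ and hence pass the integration to $\mathbb{R}^d$ against $Q_{2,\phi}$. This is a standard consequence of the conditional expectation being defined only up to null sets and is handled by the change-of-variables identity $\int_{\mathcal{X}}h(\phi(X))\,dQ_2(X)=\int_{\mathbb{R}^d}h(x)\,dQ_{2,\phi}(x)$ for nonnegative Borel $h$.
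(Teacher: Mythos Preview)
Your proof is correct and follows essentially the same approach as the paper: both test against indicator sets $A\in\mathcal{B}(\mathbb{R}^d)$, rewrite $Q_{1,\phi}(A)$ via the push-forward and the Radon--Nikodym derivative $\frac{dQ_1}{dQ_2}$, apply the tower property to condition on $\phi(X)$, and then push the integral to $\mathbb{R}^d$. Your version is slightly more explicit about the preliminary absolute-continuity check and the Doob--Dynkin factorization, but the substance is identical.
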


\begin{proof}[Proof of Lemma \ref{lmm:generalpathmeasure}:] 
Using the definition of Radon-Nikodym derivative, it suffices to check that for any $A\in \mathcal{B}(\mathbb{R}^d)$,
\begin{equation}
    \mathbb{E}_{x\sim Q_{1,\phi}}\left[\textbf{1}_A(x) \right] = \mathbb{E}_{x\sim Q_{2,\phi}}\left[\textbf{1}_A(x)\mathbb{E}_{X\sim Q_2}\left[\frac{dQ_1}{dQ_2}(X) |\phi(X)=x \right]\right].
\end{equation}
Indeed, using the definition of push forward measure and Randon-Nikodym derivative, as well as the conditional expectation, we have
\begin{equation*}
    \begin{aligned}
    LHS &= \mathbb{E}_{X\sim Q_1}\left[\textbf{1}_A(\phi(X))\right]\\
    &= \mathbb{E}_{X\sim Q_2}\left[ \frac{dQ_1}{dQ_2}(X)\textbf{1}_A(\phi(X))\right]\\
    &= \mathbb{E}_{X\sim Q_2}\left[\textbf{1}_A(\phi(X))\mathbb{E}_{X\sim Q_2}\left[\frac{dQ_1}{dQ_2}(X) | \phi(X)\right] \right]\\
    &= RHS.
    \end{aligned}
\end{equation*}
This is what we want.

\end{proof}

Clearly, if we take $\phi=\omega_t$ for $t\in[0,T]$, the time projection mapping, then we conclude the following result for Radon-Nikodym of time marginals.
\begin{corollary}\label{coro:pathmeasure}
For $t\in[0,T]$, recall the definition of $P_t^{(i)}$ and $P^{(i)}$, $i=1,2$ above. Then the following holds: 
\begin{equation}\label{lmmpath}
    \frac{dP_t^{(1)}}{dP_t^{(2)}} (x) = \mathbb{E}_{X\sim P^{(2)}} \left[\frac{dP^{(1)}}{dP^{(2)}}(X) | X_t = x \right], \quad \mathbb{P}-a.e.
\end{equation}

\end{corollary}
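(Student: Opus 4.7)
The plan is to derive Corollary \ref{coro:pathmeasure} as a direct specialization of Lemma \ref{lmm:generalpathmeasure}, by taking the measurable map $\phi$ to be the coordinate projection at time $t$. Concretely, I would set $Q_1=P^{(1)}$, $Q_2=P^{(2)}$ and $\phi=\omega_t:\mathcal{X}\to\mathbb{R}^d$ with $\omega_t(\gamma)=\gamma_t$. The map $\omega_t$ is measurable with respect to the canonical Borel $\sigma$-algebra on $\mathcal{X}=C([0,T];\mathbb{R}^d)$, so the hypotheses of Lemma \ref{lmm:generalpathmeasure} are applicable once we verify the two ingredients: (i) mutual absolute continuity $P^{(1)}\ll P^{(2)}$ of the path measures, and (ii) the identification $(\omega_t)_{\#}P^{(i)}=P_t^{(i)}$.

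For (i), absolute continuity is already implicit in the excerpt: the Girsanov transform formula \eqref{Girsanov528} exhibits an explicit positive density $\frac{dP^{(1)}}{dP^{(2)}}$ on $\mathcal{X}$ (under Novikov-type integrability conditions which hold here because the drift difference is bounded by the Lipschitz assumption on $b^{\xi}$). For (ii), I would simply unwind definitions: for any Borel set $A\subset\mathbb{R}^d$,
\begin{equation*}
(\omega_t)_{\#}P^{(i)}(A) = P^{(i)}(\omega_t^{-1}(A)) = \mathbb{P}\bigl(X^{(i)}\in\omega_t^{-1}(A)\bigr) = \mathbb{P}\bigl(X_t^{(i)}\in A\bigr) = P_t^{(i)}(A),
\end{equation*}
using that $P^{(i)}=(X^{(i)})_{\#}\mathbb{P}$ and that $\omega_t\circ X^{(i)}=X_t^{(i)}$.

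With these two checks in hand, Lemma \ref{lmm:generalpathmeasure} immediately yields
\begin{equation*}
\frac{dP_t^{(1)}}{dP_t^{(2)}}(x) = \mathbb{E}_{X\sim P^{(2)}}\!\left[\frac{dP^{(1)}}{dP^{(2)}}(X)\,\Big|\,\omega_t(X)=x\right] = \mathbb{E}_{X\sim P^{(2)}}\!\left[\frac{dP^{(1)}}{dP^{(2)}}(X)\,\Big|\,X_t=x\right],
\end{equation*}
for $P_t^{(2)}$-a.e.\ $x$, which is exactly the claimed identity (and hence holds $\mathbb{P}$-a.e.\ after pulling back along $X_t^{(2)}$).

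There is essentially no hard obstacle here: the content of the result is entirely carried by Lemma \ref{lmm:generalpathmeasure}, and the corollary is a relabeling. The only subtle point worth spelling out in the write-up is that the conditional expectation $\mathbb{E}_{X\sim P^{(2)}}[\,\cdot\,|\,X_t=x]$ is well defined as a $P_t^{(2)}$-a.s. equivalence class via the disintegration/regular conditional probability of $P^{(2)}$ given $\omega_t$, and that the identity in Lemma \ref{lmm:generalpathmeasure} should accordingly be read $P_t^{(2)}$-a.e., which is equivalent to the $\mathbb{P}$-a.e.\ statement in \eqref{lmmpath} because $P_t^{(2)}$ is the law of $X_t^{(2)}$ under $\mathbb{P}$. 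No new estimate, no Girsanov manipulation, and no pathwise argument is needed beyond what is already in the excerpt.
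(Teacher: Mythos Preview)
Your proposal is correct and matches the paper's approach exactly: the paper also derives the corollary by applying Lemma \ref{lmm:generalpathmeasure} with $\phi=\omega_t$, the time-$t$ coordinate projection. Your additional verification of the pushforward identity $(\omega_t)_{\#}P^{(i)}=P_t^{(i)}$ and the remark on the $P_t^{(2)}$-a.e.\ versus $\mathbb{P}$-a.e.\ reading are nice clarifications but not required beyond what the paper states.
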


Also note that Corollary \ref{coro:pathmeasure} here is very useful. For instance, if we combine the result \eqref{lmmpath} in Corollary \ref{coro:pathmeasure} with the Girsanov transform in \eqref{Girsanov528}, we can express the quotient of densities of two processes $\rho^{(1)}/\rho^{(2)}$ by one process $X^{(2)}$ alone. Meanwhile, the information coming from the process $X^{(1)}$ is stored in the drift term $b^{(1)}(\cdot)$, which is usually a deterministic function.  Then we only need to look into one of these two processes instead of both of them, and this can often simplify the analysis.

\subsection{Estimate of the remaining terms}
We estimate the remaining terms \[\mathbb{E}_{\xi_k}\mathbb{E}\left[|\bar{b}^{\xi_k}_t(\bar{X}_t) - b^{\xi_k}(\bar{X}_t)|^2\Big| \xi_k\right],\]
and 
\[\mathbb{E}_{\xi_k,\tilde{\xi}_k} \left[\int |b^{\xi_k}-b|^2 \frac{|\bar{\rho}_t^{\tilde{\xi}_k} - \bar{\rho}_t^{\xi_k}|^2}{\bar{\rho}_t^{\tilde{\xi}_k}} dx\right]\]
in Section \ref{sec:localpf}.  Note that in the followings we consider the behaviour of SGLD in a time subinterval $[T_k,T_{k+1})$, and recall that $\bar{\rho}^{\xi_k}$, $\bar{\rho}^{\tilde{\xi}_k}$ are defined in \eqref{eq:Lstar}.

\begin{lemma}\label{lmmrt}
Recall the definition of $\bar{r}_t(x)$ in \eqref{defrt}. Then under the setting of Proposition \ref{local_estimate}, there exists a positive constant $c$ independent of the choice of the batch $\xi_k$ such that for all $t \in \left[T_k, T_{k+1} \right)$, it holds that
\begin{equation}\label{rtbound}
    \mathbb{E} \left[|\bar{r}_t(\bar{X}_t)|^2 \Big| \xi_k\right] \leq cd \beta^{-2} (t - T_k)^2.
\end{equation}
\end{lemma}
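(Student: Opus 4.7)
The plan is to estimate $\bar{r}_t$ pointwise using the Hessian bound from Assumption~\ref{ass:b}(b), and then control a fourth-order moment of the increment $\bar{X}_t-\bar{X}_{T_k}$ via the defining formula \eqref{sgld_continuous} together with the moment control in Proposition~\ref{thm:moment}.

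First, Assumption~\ref{ass:b}(b) says that $\nabla b^{\xi}$ is Lipschitz with constant $L_2 d^{-1/2}$, which amounts to the spectral bound $\|\nabla^2 b^{\xi_k}(\cdot)\|\leq L_2 d^{-1/2}$ on the third-order tensor $\nabla^2 b^{\xi_k}$ viewed as a bilinear map on pairs of vectors. Contracting with $(\bar{X}_{T_k}-\bar{X}_t)^{\otimes 2}$ and applying the triangle inequality inside the conditional expectation in \eqref{defrt} yields
\begin{equation*}
    |\bar{r}_t(x)|\leq \frac{L_2}{2\sqrt{d}}\,\mathbb{E}\bigl[\,|\bar{X}_{T_k}-\bar{X}_t|^2\,\big|\,\bar{X}_t=x,\xi_k\bigr].
\end{equation*}
Squaring, applying conditional Jensen, and integrating against $\bar{\rho}_t^{\xi_k}$ via the tower property gives
\begin{equation*}
    \mathbb{E}\bigl[\,|\bar{r}_t(\bar{X}_t)|^2\,\big|\,\xi_k\bigr]\leq \frac{L_2^2}{4d}\,\mathbb{E}\bigl[\,|\bar{X}_{T_k}-\bar{X}_t|^4\,\big|\,\xi_k\bigr].
\end{equation*}

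Next I would control the fourth moment of the increment. By \eqref{sgld_continuous},
\begin{equation*}
    \bar{X}_t-\bar{X}_{T_k}=(t-T_k)\,b^{\xi_k}(\bar{X}_{T_k})+\sqrt{2\beta^{-1}}\,(W_t-W_{T_k}),
\end{equation*}
and the elementary inequality $|u+v|^4\leq 8(|u|^4+|v|^4)$ splits the estimation. Assumption~\ref{ass:b}(a) combined with (d) (which guarantees $|b^{\xi_k}(0)|$ is bounded uniformly in $\xi_k$) gives $|b^{\xi_k}(x)|\leq C_0+L|x|$, so Proposition~\ref{thm:moment} applied at $p=4$ yields $\mathbb{E}[\,|b^{\xi_k}(\bar{X}_{T_k})|^4\mid\xi_k\,]\leq c\,d^2$. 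The Gaussian moment gives $\mathbb{E}|W_t-W_{T_k}|^4=d(d+2)(t-T_k)^2\leq c\,d^2(t-T_k)^2$. Using $t-T_k<\Delta_0$ to absorb the quartic-in-time contribution into the quadratic one, we obtain
\begin{equation*}
    \mathbb{E}\bigl[\,|\bar{X}_t-\bar{X}_{T_k}|^4\,\big|\,\xi_k\bigr]\leq c\,d^2\,(t-T_k)^2,
\end{equation*}
which, combined with the previous display, yields the claim $\mathbb{E}[\,|\bar{r}_t(\bar{X}_t)|^2\mid\xi_k\,]\leq c\,d\,(t-T_k)^2$ with $c$ independent of $k$, $d$, and $\xi_k$.

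The only delicate point is the dimensional bookkeeping: the factor $d^{-1/2}$ in the Hessian Lipschitz constant in Assumption~\ref{ass:b}(b) is exactly what is needed to cancel one power of $d$ arising from the fourth moment of the $d$-dimensional Brownian increment, producing the linear-in-$d$ scaling consistent with the rest of the paper. Beyond this, the argument is a standard one-step SDE estimate; no Girsanov or Fokker--Planck tools are required here.
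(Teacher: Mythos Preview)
Your proposal is correct and follows essentially the same approach as the paper: bound $|\bar r_t(x)|$ pointwise via the Hessian Lipschitz constant from Assumption~\ref{ass:b}(b), apply conditional Jensen and the tower property to reduce to $\mathbb{E}[|\bar X_{T_k}-\bar X_t|^4\mid\xi_k]$, then split the increment into drift and Brownian parts and control each using Proposition~\ref{thm:moment} and the standard Gaussian fourth moment. Your dimensional bookkeeping (the $d^{-1/2}$ in the Hessian bound cancelling one factor of $d$ from the fourth moment) is exactly the mechanism the paper relies on, and your write-up is in fact slightly cleaner than the paper's in tracking the constants.
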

\begin{proof}[Proof of Lemma \ref{lmmrt}:]
Since $\nabla b^{\xi}$ is $L_2$-Lipschitz, we know that
\begin{equation}
    |\bar{r}_t(x)| \leq L_2 \mathbb{E} \left[ |\bar{X}_{T_k} - \bar{X}_t|^2 | \bar{X}_t = x,  \xi_k \right].
\end{equation}
Hence by Jensen's inequality, and using the Lipschitz assumption for $b^{\xi_k}$ in Assumption \ref{ass:b}, we have
\begin{equation*}
    \begin{aligned}
    \mathbb{E} \left[|\bar{r}_t(\bar{X}_t)|^2 \Big| \xi_k\right] &\leq L_2^2 d^{-1} \mathbb{E}\left[\left|\mathbb{E} \left[ |\bar{X}_{T_k} - \bar{X}_t|^2 | \bar{X}_t = x \right]\right|^2\Big| \xi_k\right]\\
    &\leq L_2^2  d^{-1}\mathbb{E}\left[\mathbb{E} \left[ |\bar{X}_{T_k} - \bar{X}_t|^4 | \bar{X}_t = x \right] \Big| \xi_k\right]\\
    &= L_2^2  d^{-1} \mathbb{E}\left[|b^{\xi_k}(\bar{X}_{T_k})(t-T_k) + \sqrt{2\beta^{-1}}\int_{T_k}^t dW|^4 \Big| \xi_k\right]\\
    & \leq L_2^2 d^{-1} \left((t-T_k)^4\left(|b^{\xi_k}(0)| + L\mathbb{E}\left[|\bar{X}_{T_k}|\Big| \xi_k\right]\right)^4 + 96(t-T_k)^2d^2\beta^{-2}\right),
    \end{aligned}
\end{equation*}
where we use the inequality $(a+b)^p \leq 2^{p-1} (a^p + b^p)$ in the last inequality.

Finally, by Proposition \ref{thm:moment}, we have a uniform bound for the moment $\mathbb{E}\left[|\bar{X}_{T_k}|^4\Big| \mathcal{F}_{\xi}\right]$, which leads to the conclusion \eqref{rtbound}.
\end{proof}

\begin{lemma}\label{integralbyparts} Under the setting of Proposition \ref{local_estimate}, there exists a positive constant $c$ independent of $k$ and $\xi_k$ such that for $t\in [T_k,T_{k+1})$,  
\begin{equation*}
    \int \left|\mathbb{E} [\bar{X}_{T_k} - \bar{X}_t | \bar{X}_t  = x,\xi_k ]\right|^2 \bar{\rho}_t^{\xi_k}(x)dx \leq c\eta_k^2 \left(d(1 + \beta^{-1})+\beta^{-2}\mathcal{I}(\bar{\rho}_{T_k})\right),
\end{equation*}
where $\mathcal{I}(\rho) := \int \rho |\nabla \log \rho|^2 dx$ is the Fisher information.
\end{lemma}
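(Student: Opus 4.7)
The plan is to exploit the explicit Gaussian structure of the one-step transition density for the continuous SGLD and convert the conditional expectation into a score function via integration by parts. Writing $p(x|y) := P(\bar{X}_t = x \mid \bar{X}_{T_k} = y, \xi_k)$, Bayes' law (as already noted in the paper in the paragraph around \eqref{319}) gives
\begin{equation*}
    \mathbb{E}[\bar{X}_{T_k} - \bar{X}_t \mid \bar{X}_t = x, \xi_k] \;=\; \frac{1}{\bar{\rho}_t^{\xi_k}(x)} \int (y-x)\,\bar{\rho}_{T_k}(y)\,p(x|y)\,dy,
\end{equation*}
so the main task is to estimate the numerator.

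The key observation is that for the Gaussian density $p(x|y) \propto \exp\bigl(-|x - y - (t-T_k)b^{\xi_k}(y)|^2 / (4\beta^{-1}(t-T_k))\bigr)$ one has
\begin{equation*}
    y - x \;=\; 2\beta^{-1}(t-T_k)\,\nabla_x \log p(x|y) \;+\; (t-T_k)\,b^{\xi_k}(y).
\end{equation*}
Substituting and swapping $\nabla_x$ with the $y$-integral yields
\begin{equation*}
    \int (y-x)\bar{\rho}_{T_k}(y)p(x|y)\,dy \;=\; 2\beta^{-1}(t-T_k)\,\nabla_x \bar{\rho}_t^{\xi_k}(x) \;+\; (t-T_k) \int b^{\xi_k}(y)\bar{\rho}_{T_k}(y)p(x|y)\,dy,
\end{equation*}
since $\int \bar{\rho}_{T_k}(y) p(x|y)\,dy = \bar{\rho}_t^{\xi_k}(x)$. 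Dividing by $\bar{\rho}_t^{\xi_k}(x)$ gives the clean identity
\begin{equation*}
    \mathbb{E}[\bar{X}_{T_k} - \bar{X}_t \mid \bar{X}_t = x, \xi_k] \;=\; 2\beta^{-1}(t-T_k)\,\nabla \log \bar{\rho}_t^{\xi_k}(x) \;+\; (t-T_k)\,\mathbb{E}[b^{\xi_k}(\bar{X}_{T_k}) \mid \bar{X}_t = x, \xi_k].
\end{equation*}

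From here I would square, use $|a+b|^2 \leq 2|a|^2 + 2|b|^2$, and integrate against $\bar{\rho}_t^{\xi_k}$. The gradient term yields a multiple of $(t-T_k)^2 \mathcal{I}(\bar{\rho}_t^{\xi_k})$, while the drift term is bounded by $(t-T_k)^2 \mathbb{E}[|b^{\xi_k}(\bar{X}_{T_k})|^2 \mid \xi_k]$ after Jensen's inequality on the conditional expectation. The latter is $O(d)$ by the Lipschitz bound on $b^{\xi_k}$ combined with the uniform-in-time moment control of Proposition \ref{thm:moment}.

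The only mildly tricky point is converting $\mathcal{I}(\bar{\rho}_t^{\xi_k})$ back to $\mathcal{I}(\bar{\rho}_{T_k})$, which is what appears in the statement. This is handled by invoking the stability estimate of Lemma \ref{fisher_regularity}, applied to the Fokker-Planck equation \eqref{FP_xi} governing $\bar{\rho}_t^{\xi_k}$ on $[T_k, T_{k+1})$ with initial datum $\bar{\rho}_{T_k}$: this gives $\mathcal{I}(\bar{\rho}_t^{\xi_k}) \leq 8\mathcal{I}(\bar{\rho}_{T_k}) + c\eta_k^2 d$. Absorbing the $\eta_k^2 d$ piece into the final estimate yields the claimed $c\eta_k^2(d + \mathcal{I}(\bar{\rho}_{T_k}))$ bound, with constants independent of $\xi_k$ since all dependence on $\xi_k$ enters only through the Lipschitz constant $L$ and the uniform moment bound. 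The main obstacle, more conceptually than technically, is recognizing the integration-by-parts trick that turns the displacement $y-x$ into the score of the transition kernel; without this, the naive estimate would only give $O(\eta_k)$ rather than $O(\eta_k^2)$.
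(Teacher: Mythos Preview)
Your argument is correct (modulo a harmless sign: the identity should read $y-x = 2\beta^{-1}(t-T_k)\nabla_x\log p(x|y) - (t-T_k)b^{\xi_k}(y)$), but it differs from the paper's route in an instructive way. You differentiate the Gaussian kernel in $x$, which keeps the Jacobian of $y\mapsto y+(t-T_k)b^{\xi_k}(y)$ out of the picture and yields the clean two-term identity involving $\nabla\log\bar{\rho}_t^{\xi_k}$; the price is that you land on $\mathcal{I}(\bar{\rho}_t^{\xi_k})$ rather than $\mathcal{I}(\bar{\rho}_{T_k})$ and must then invoke the one-step Fisher stability (Lemma~\ref{fisher_regularity}) to transfer back. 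The paper instead differentiates in $y$, which forces the three-term split $y-x=a_1-a_2-a_3$ with the Jacobian factor $I_d+(t-T_k)\nabla b^{\xi_k}(y)$ appearing in $a_1$; integration by parts in $y$ then produces $\nabla_y\bar{\rho}_{T_k}(y)$ directly, so the bound $c(t-T_k)^2\mathcal{I}(\bar{\rho}_{T_k})$ falls out without any appeal to Fisher stability. Your version is tidier algebraically but is not self-contained within the lemma; the paper's version is more hands-on but avoids circularity with the auxiliary Fisher machinery. One small caveat: Lemma~\ref{fisher_regularity} is literally stated for the fully-conditioned density $\rho_t^{\boldsymbol{\xi}}$, whereas you apply it to $\bar{\rho}_t^{\xi_k}$; this is fine because the proof of that lemma depends only on the one-step ULA structure on $[T_k,T_{k+1})$ and the initial datum is arbitrary, but it is worth flagging.
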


\begin{proof}[Proof of Lemma \ref{integralbyparts}:]
By Bayes' law, we have
\begin{equation}
\begin{aligned}
     \mathbb{E} [\bar{X}_{T_k} - \bar{X}_t | \bar{X}_t  = x,\xi_k ] &= \int (y-x)P(\bar{X}_{T_k} = y | \bar{X}_t = x,\xi_k) dy\\
     & = \int (y-x) \frac{\bar{\rho}_{T_k}(y)P(\bar{X}_t = x | \bar{X}_{T_k} = y,\xi_k)}{\bar{\rho}_t^{\xi_k}(x)} dy.
\end{aligned}
\end{equation}
Since $P(\bar{X}_t = x | \bar{X}_{T_k} = y,\xi_k)$ is Gaussian, and since its derivative is also similar to the Gaussian form, we split the term $ \mathbb{E} [\bar{X}_{T_k} - \bar{X}_t | \bar{X}_t = x, \xi_k ]$ into three parts and use integration by parts to handle them.

Let
\begin{equation*}
    \begin{aligned}
    y - x &= \left(I_d + (t-T_k)  \nabla b^{\xi_k} (y) \right) \cdot \left(y - x + (t-T_k) b^{\xi_k}(y) \right)\\
    & \quad- (t-T_k)  \nabla b^{\xi_k}(y) \cdot \left(y - x + (t-T_k) b^{\xi_k}(y) \right)\\
    & \quad- (t-T_k) \cdot b^{\xi_k}(y)\\
    & := a_1(x,y) - a_2(x,y) - a_3(x,y),
    \end{aligned}
\end{equation*}
and define
\begin{equation}\label{Iidef}
    I_i(x) := \mathbb{E}\left[a_i( \bar{X}_t,\bar{X}_{T_k}) | \bar{X}_t = x \right], \quad i=1,2,3.
\end{equation}

Next, we will obtain an $\eta_k^2$ estimate for $\mathbb{E}\left[|I_i(\bar{X}_t)|^2\Big| \xi_k \right]$, $i = 1,2,3$. 
We first claim that, there exist positive constants $c$, $c'$, $c''$ independent of $k$ and $\xi$ such that for $t\in [T_k,T_{k+1})$, 
\begin{equation}\label{claimI1}
    \mathbb{E}\left[|I_1(\bar{X}_t)|^2\Big| \xi_k\right] \leq c\beta^{-2}(t-T_k)^2 \mathcal{I}(\bar{\rho}_{T_k}),
\end{equation}
\begin{equation}\label{claimI2}
    \mathbb{E}\left[|I_2(\bar{X}_t)|^2\Big| \xi_k\right] \leq c'd\beta^{-1}(t-T_k)^3,
\end{equation}
\begin{equation}\label{claimI3}
    \mathbb{E}\left[|I_3(\bar{X}_t)|^2\Big| \xi_k\right] \leq c''d(1 + \beta^{-1})(t-T_k)^2,
\end{equation}
where $\mathcal{I}(\rho) := \int \rho |\nabla \log \rho|^2 dx$ is the Fisher information.

By the claims \eqref{claimI1}, \eqref{claimI2}, \eqref{claimI3} , we can easily obtain the following $O\left(\eta_k^2 (d+\mathcal{I}(\bar{\rho}_{T_k})\right)$  bound:
\begin{equation*}
\quad\int |\mathbb{E} [\bar{X}_{T_k} - \bar{X}_t | \bar{X}_t  = x, \xi_k ]|^2 \bar{\rho}_t^{\xi_k}(x)dx \leq 3\sum_{i=1}^3 \mathbb{E} \left[|I_i(\bar{X}_t)|^2\Big| \xi_k \right] \leq \tilde{c} \eta_k^2  \left( d + \mathcal{I}(\bar{\rho}_{T_k})\right),
\end{equation*}
where the positive constant $\tilde{c}$ is independent of the batch $\xi_k$, and this is what we desire.

For the the claims \eqref{claimI1}, \eqref{claimI2}, \eqref{claimI3}, we prove them separately in the followings:

(a) For the term $I_1$, since the distribution $P(\bar{X}_t=x|\bar{X}_{T_k}=y, \xi_k)$ is Gaussian, namely,
\begin{equation}
    P(\bar{X}_t=x|\bar{X}_{T_k}=y, \xi_k) = \left(4\pi \beta^{-1} (t-T_k)\right)^{-\frac{d}{2}}\exp\left(-\frac{|x-y-b^{\xi_k}(y)(t-T_k)|^2}{4\beta^{-1}(t-T_k)}\right).
\end{equation}
Then, after integration by parts we obtain:
\begin{equation}
    I_1(x) = 2\beta^{-1}(t-T_k)\int  \frac{\nabla_y \bar{\rho}_{T_k}(y)}{\bar{\rho}_t^{\xi_k}(x)} P(\bar{X}_t = x | \bar{X}_{T_k} = y, \xi_k) dy.
\end{equation}
Using Bayes' law again, we have
\begin{equation}
    I_1(x) = 2\beta^{-1}(t-T_k) \int  \frac{\nabla_y \bar{\rho}_{T_k}(y)}{\bar{\rho}_{T_k}(y)} P(  \bar{X}_{T_k} = y|\bar{X}_t = x, \xi_k) dy.
\end{equation}
Hence, by Jensen's inequality,
\begin{equation}
\begin{aligned}
    \mathbb{E}\left[|I_1(\bar{X}_t)|^2\Big| \xi_k \right] &\leq 4\beta^{-2}(t-T_k)^2 \int \bar{\rho}_t^{\xi_k}(x) \int  \left|\frac{\nabla_y \bar{\rho}_{T_k} (y)}{\bar{\rho}_{T_k}(y)}\right|^2 P(  \bar{X}_{T_k} = y|\bar{X}_t = x ,\xi_k) dy dx\\
    & = 4\beta^{-2}(t-T_k)^2 \mathcal{I}(\bar{\rho}_{T_k}) := c\beta^{-2}(t-T_k)^2\mathcal{I}(\bar{\rho}_{T_k}).
\end{aligned}
\end{equation}

(b) For the term $I_2$, using the Lipschitz condition in Assumption \ref{ass:b} and Jensen's inequality, we have
\begin{equation}
\begin{aligned}
    \mathbb{E}\left[|I_2(\bar{X}_t)|^2\Big| \xi_k\right]   &\leq  \tilde{c}' (t - T_k)^2\mathbb{E}\left[\left|  \mathbb{E}\left[\bar{X}_t - \bar{X}_{T_k} - (t-T_k)b^{\xi_k}(\bar{X}_{t}) \Big| \bar{X}_{T_k} \right]\right|^2 \Big| \xi_k\right]   \\
    &\le \tilde{c}' (t - T_k)^2 \mathbb{E}\left[\left|  \bar{X}_t - \bar{X}_{T_k} - (t-T_k)b^{\xi_k}(\bar{X}_{t})\right|^2\Big| \xi_k\right]\\
    &\leq \tilde{c}' (t-T_k)^2\mathbb{E}\left[\left|\int_{T_k}^t \sqrt{2\beta^{-1}}dW_s\right|^2\Big| \xi_k\right] = c'd\beta^{-1}(t-T_k)^3 .
\end{aligned}
\end{equation}
The constant $c'$ here is independent of $\xi_k$ since the Lipschitz constant is uniform.

(c) For the term $I_3$, still using Jensen's inequality and Lipschitz assumption for $b^{\xi_k}$, it is clear that
\begin{equation}
\begin{aligned}
    \mathbb{E}\left[|I_3(\bar{X}_t)|^2\Big| \xi_k\right] &\leq (t-T_k)^2 \mathbb{E}\left[|b^{\xi_k}(\bar{X}_{T_k})|^2\Big| \xi_k\right]\\
    &\leq 2(t-T_k)^2 \left(|b^{\xi_k}(0)|^2 + L_2\mathbb{E}\left[|\bar{X}_{T_k}|^2\Big| \xi_k\right]\right).
\end{aligned}
\end{equation}
By Proposition \ref{thm:moment}, for any fixed batch $\xi_k$, the moment $\mathbb{E}\left[|\bar{X}_{T_k}|^2\Big| \xi_k\right]$ has a uniform-in-batch $O(1)$ upper bound.  Therefore, we are able to obtain the following batch-independent bound for the term $\bar{I}_3$:
\begin{equation}
    \mathbb{E}\left[|I_3(\bar{X}_t)|^2 \Big| \xi_k\right]\leq c''d(1 + \beta^{-1}) (t-T_k)^2.
\end{equation}

Concluding the results we obtained in parts (a), (b), (c) yields  the claims \eqref{claimI1}, \eqref{claimI2}, \eqref{claimI3}.

\end{proof}

In the following lemmas, we prove some details for estimate of $\mathbb{E}_{\xi_k,\tilde{\xi}_k} \left[\int |b^{\xi_k}-b|^2 \frac{|\bar{\rho}_t^{\tilde{\xi}_k} - \bar{\rho}_t^{\xi_k}|^2}{\bar{\rho}_t^{\tilde{\xi}_k}} dx\right]$.

\begin{lemma}\label{lmm:i}
Recall the notations 
\begin{equation}
    K_1 := \frac{\beta}{2}\,\tilde{b}(y)(x-y),\quad \tilde{b}(y) := (b^{\xi_k}-b^{\tilde{\xi}_k})(y).
\end{equation}
Then under the conditions of Proposition \ref{local_estimate}, there exists a positive constant $c$ independent of $k$ and $\xi_k$,$\tilde{\xi}_k$ such that for $t\in [T_k,T_{k+1})$, 
\begin{equation}
    \int \bar{\rho}_t^{\tilde{\xi}_k} (x) \left(\int K_1  P(\bar{X}_{T_k} = y|\bar{X}'_t = x,\xi_k,\tilde{\xi}_k) dy \right)^2dx \leq c\eta_k^2 \left(d(\beta^2+1)+\mathcal{I}(\bar{\rho}_{T_k})\right).
\end{equation}
\end{lemma}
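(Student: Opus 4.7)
The plan is to mimic the Gaussian integration-by-parts technique used in Lemma \ref{integralbyparts}, exploiting that the inner integrand $K_1=\sqrt{\beta/2}\,\tilde{b}(y)\cdot(x-y)$ is linear in $x-y$, so that the factor $(x-y)$ can be re-expressed as a derivative of the explicit Gaussian transition density and moved onto $\tilde{b}(y)\bar{\rho}_{T_k}(y)$ via integration by parts. This is what generates the Fisher information $\mathcal{I}(\bar{\rho}_{T_k})$ on the right-hand side, together with subdominant $O(d\eta_k^2)$ contributions.

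Concretely, I would first invoke Bayes' rule to write
\[
P(\bar{X}_{T_k}=y\mid\bar{Y}_t=x,\xi_k,\tilde{\xi}_k) = \frac{\bar{\rho}_{T_k}(y)\,P(\bar{Y}_t=x\mid\bar{Y}_{T_k}=y,\tilde{\xi}_k)}{\bar{\rho}_t^{\tilde{\xi}_k}(x)},
\]
where $P(\bar{Y}_t=x\mid\bar{Y}_{T_k}=y,\tilde{\xi}_k)$ is the explicit Gaussian with mean $y+\eta_k b^{\tilde{\xi}_k}(y)$ and covariance $2\beta^{-1}\eta_k I_d$. Next I would split $x-y$ exactly as in the proof of Lemma \ref{integralbyparts}: one piece is $-(I_d+\eta_k\nabla b^{\tilde{\xi}_k}(y))(y-x+\eta_k b^{\tilde{\xi}_k}(y))$, designed so that multiplied by $P$ it equals $2\beta^{-1}\eta_k\nabla_y P$; the other two pieces $\eta_k\nabla b^{\tilde{\xi}_k}(y)(y-x+\eta_k b^{\tilde{\xi}_k}(y))$ and $\eta_k b^{\tilde{\xi}_k}(y)$ already carry an explicit $\eta_k$ factor. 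This induces a decomposition $\int K_1\,P(\,\cdot\mid x)\,dy=\mathcal{J}_1(x)+\mathcal{J}_2(x)+\mathcal{J}_3(x)$.

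For $\mathcal{J}_1$, integrating by parts in $y$ produces a conditional expectation of $-\nabla\cdot\tilde{b}(\bar{X}_{T_k})-\tilde{b}(\bar{X}_{T_k})\cdot\nabla\log\bar{\rho}_{T_k}(\bar{X}_{T_k})$ given $\bar{Y}_t=x$; squaring, integrating against $\bar{\rho}_t^{\tilde{\xi}_k}$, and applying conditional Jensen produce a bound of the form $c\eta_k^2(\mathbb{E}|\nabla\cdot\tilde{b}(\bar{X}_{T_k})|^2 + \|\tilde{b}\|_\infty^2\,\mathcal{I}(\bar{\rho}_{T_k}))$, which is controlled by the first-order Lipschitz bound from Assumption \ref{ass:b}(a) together with the uniform boundedness from Assumption \ref{ass:b}(d). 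The remaining pieces $\mathcal{J}_2,\mathcal{J}_3$ already carry an explicit $\eta_k$: conditional Jensen combined with Assumption \ref{ass:b}(a)--(d), the identity $\mathbb{E}|\bar{Y}_t-\bar{Y}_{T_k}-\eta_k b^{\tilde{\xi}_k}(\bar{Y}_{T_k})|^2 = 2\beta^{-1}\eta_k d$, and the moment estimate of Proposition \ref{thm:moment} for $\bar{X}_{T_k}$ bound each of these by $O(d\eta_k^2)$. The main obstacle is the bookkeeping of dimension factors in the $\nabla\cdot\tilde{b}$ contribution to $\mathcal{J}_1$: the crude trace estimate $|\nabla\cdot\tilde{b}|\le 2Ld$ from Assumption \ref{ass:b}(a) must be handled carefully together with the dimensional scaling of $\mathcal{I}(\bar{\rho}_{T_k})\sim d$ coming from Lemma \ref{Mfisher}, so that the final bound is consistent with the clean $c\eta_k^2(d+\mathcal{I}(\bar{\rho}_{T_k}))$ claimed in the statement; the combination of the three pieces then yields the lemma.
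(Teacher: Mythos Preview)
Your approach is essentially identical to the paper's: the same Bayes-rule rewrite with the explicit Gaussian transition kernel, the same three-term splitting of $y-x$ (the paper writes $\bar a_1,\bar a_2,\bar a_3$ and defines $\bar I_i(x)=\mathbb{E}[\bar a_i\mid\bar Y_t=x]$), the same integration by parts on the first piece to transfer the $y$-derivative onto $\tilde b(y)\bar\rho_{T_k}(y)$, and the same treatment of the remaining two pieces via conditional Jensen, boundedness of $\tilde b$ and $\nabla b^{\tilde\xi_k}$, and the moment control of Proposition~\ref{thm:moment}.

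One remark: after the integration by parts you correctly obtain the divergence $\nabla\cdot\tilde b$, while the paper writes the slightly loose $\nabla\tilde b$; your version is the right object since $K_1$ is scalar-valued. Your concern about the crude bound $|\nabla\cdot\tilde b|\le 2Ld$ is legitimate for sharp dimensional tracking, but it does not obstruct the lemma as stated: the constant $c$ is only asserted to be independent of $k,\xi_k,\tilde\xi_k$, not of $d$, so any extra $d$-factor from the trace can be absorbed into $c$ (and indeed the paper's intermediate bound \eqref{433} reads $c\eta_k^2(1+\mathcal I(\bar\rho_{T_k}))$ with $c$ implicitly carrying this dependence). So the bookkeeping you flagged is a refinement rather than a gap.
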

\begin{proof}[Proof of Lemma \ref{lmm:i}:]
The technique here is similar to the one we use in the proof of Lemma \ref{integralbyparts}. To begin with, we split the term $K_1$ into the following three parts:
\begin{equation*}
    \begin{aligned}
    \tilde{b}(y) \cdot (y - x) &= \tilde{b}(y) \cdot \left(I_d + (t-T_k)  \nabla b^{\tilde{\xi}_k} (y) \right) \cdot \left(y - x + (t-T_k) b^{\tilde{\xi}_k}(y) \right)\\
    & \quad- (t-T_k) \tilde{b}(y) \cdot \nabla b^{\tilde{\xi}_k}(y) \cdot \left(y - x + (t-T_k) b^{\tilde{\xi}_k}(y) \right)\\
    & \quad- (t-T_k) \tilde{b}(y) \cdot b^{\tilde{\xi}_k}(y)\\
    & := \bar{a}_1(x,y) - \bar{a}_2(x,y) - \bar{a}_3(x,y),
    \end{aligned}
\end{equation*}
and define
\begin{equation*}
    \bar{I_i}(x) := \mathbb{E}\left[\bar{a}_i(\bar{X}_{T_k}, \bar{X}'_t) \Big| \bar{X}'_t = x, \xi_k,\tilde{\xi}_k  \right], \quad i=1,2,3.
\end{equation*}
Then,
\begin{equation}
   \int \bar{\rho}_t^{\tilde{\xi}_k} (x) \left(\int K_1  P(\bar{X}_{T_k} = y|\bar{X}'_t = x) dy \right)^2dx =\frac{\beta^2}{4}\mathbb{E} \left[|\bar{I_1}(\bar{X}'_t)- \bar{I_2}(\bar{X}'_t) -\bar{I_3}(\bar{X}'_t)|^2\Big| \xi_k,\tilde{\xi}_k \right]
\end{equation}

(a) For the first term $\bar{I_1}$, using Bayes' formula and integration by parts, since $P(\bar{X}'_t= x|\bar{X}_{T_k} = y ,\xi_k,\tilde{\xi}_k )$ is Gaussian, we have
\begin{equation*}
    \begin{aligned}
    \bar{I_1}(x) &= 2\beta^{-1}\int \tilde{b}(y) \cdot \left(I_d + (t-T_k)  \nabla b^{\tilde{\xi}_k} (y) \right) \cdot \frac{\beta}{2}\left(y - x + (t-T_k) b^{\tilde{\xi}_k}(y) \right) \frac{\bar{\rho}_{T_k}(y)}{\bar{\rho}_t^{\tilde{\xi}_k}(x)}  P(\bar{X}'_t = x|\bar{X}_{T_k} = y ) dy\\
    &=2\beta^{-1}(t-T_k) \int \frac{\nabla_y(\tilde{b}(y) \bar{\rho}_{T_k}(y))}{\bar{\rho}_t^{\tilde{\xi}_k}(x)}P(\bar{X}'_t= x|\bar{X}_{T_k} = y ,\xi_k,\tilde{\xi}_k ) dy\\
    &= 2\beta^{-1}(t-T_k)\int \left(\nabla \tilde{b}(y) + \tilde{b}(y) \frac{\nabla \bar{\rho}_{T_k}(y)}{\bar{\rho}_{T_k}(y)}\right) P(\bar{X}_{T_k} = y| \bar{X}'_t = x,\xi_k,\tilde{\xi}_k) dy. 
    \end{aligned}
\end{equation*}
Since $\nabla \tilde{b}$ and $\tilde{b}=[(b^{\xi_k} - b) - (b^{\tilde{\xi}_k} - b)]/\sqrt{2\beta^{-1}}$ are uniformly bounded by Assumption \ref{ass:b}, we obtain
\begin{equation}\label{433}
     \beta^2\mathbb{E} \left[| \bar{I_1}(\bar{X}'_t) |^2\Big| \xi_k,\tilde{\xi}_k\right] \leq c\eta_k^2\left(1+\mathcal{I}(\bar{\rho}_{T_k})\right),
\end{equation}
and the constant $c$ is independent of $\xi_k$ and $\tilde{\xi}_k$.

(b) For the second term $\bar{I}_2$, by Jensen's inequality, it holds that 
\begin{equation}
    \beta^2\mathbb{E}\left[|\bar{I}_2(\bar{X}'_t)|^2\Big| \xi_k,\tilde{\xi}_k\right] \leq \beta^2(t-T_k)^2\mathbb{E}\left[ \left|\tilde{b}(\bar{X}_{T_k}) \cdot \nabla b^{\tilde{\xi}_k}(\bar{X}_{T_k}) \cdot \int_{T_k}^t dW\right|^2\Big| \xi_k,\tilde{\xi}_k\right].
\end{equation}
By Assumption \ref{ass:b}, both $\tilde{b}=[(b^{\xi_k} - b) - (b^{\tilde{\xi}_k} - b)]$ and $\nabla b^{\tilde{\xi}_k}$ are uniformly bounded, so we can directly get
\begin{equation}
   \beta^2 \mathbb{E}\left[|\bar{I}_2(\bar{X}'_t)|^2\Big| \xi_k,\tilde{\xi}_k\right] \leq cd\beta^2(t-T_k)^3,
\end{equation}
and the constant $c$ is independent of $\xi_k$ and $\tilde{\xi}_k$.

(c) For the third term $\bar{I}_3$, by Jensen's inequality, 
\begin{equation}
   \beta^2 \mathbb{E}\left[\left|\bar{I}_3(\bar{X}'_t)\right|^2\Big| \xi_k,\tilde{\xi}_k\right]  \leq \beta^2(t-T_k)^2\mathbb{E}\left[\left|\tilde{b}(\bar{X}_{T_k}) \cdot b^{\tilde{\xi}_k}(\bar{X}_{T_k})\right|^2\Big| \xi_k,\tilde{\xi}_k\right].
\end{equation}
Since $\tilde{b}$ is bounded, and since $b^{\tilde{\xi}_k}$ is Lipschitz by Assumption \ref{ass:b}, we have
\begin{equation}
    \begin{aligned}
    \beta^2\mathbb{E}\left[|\bar{I}_3(\bar{X}'_t)|^2\Big| \xi_k,\tilde{\xi}_k \right] &\leq c\beta^2(t-T_k)^2\mathbb{E}\left[|b^{\tilde{\xi}_k}(\bar{X}_{T_k})|^2\Big| \xi_k,\tilde{\xi}_k\right]\\
    &\leq 2c\beta^2(t-T_k)^2 \left(|b^{\tilde{\xi}_k}(0)|^2 + L^2 \mathbb{E}\left[|\bar{X}_{T_k}|^2\Big| \xi_k,\tilde{\xi}_k\right] \right).
    \end{aligned}
\end{equation}
Due to \eqref{eq:bddiff}, $|b^{\tilde{\xi}_k}(0)|
\le |b(0)|+|b^{\tilde{\xi}_k}(0)-b(0)|$ is uniformly bounded almost surely. By Proposition \ref{thm:moment},  there is a uniform-in-batch $O(1)$ bound for the moment $\mathbb{E}|\bar{X}_{T_k}|^2$. Hence we obtain
\begin{equation}
    \beta\mathbb{E}\left[|\bar{I}_3(\bar{X}'_t)|^2\Big| \xi_k,\tilde{\xi}_k\right] \leq c d \beta^2(1 + \beta^{-1}) (t-T_k)^2,
\end{equation}
and the constant $c$ is independent of $\xi_k$ and $\tilde{\xi}_k$.

Finally, combining (a), (b)  and (c), we get the desired result.

\end{proof}

\begin{lemma}\label{lmm:ii}
Recall the notations 
\begin{equation}
    K_2 := \left(-\frac{\beta}{2}\,\tilde{b}(y) \cdot b^{\tilde{\xi}_k}(y)- \frac{\beta}{4} |\tilde{b}(y)|^2\right)(t-T_k),\quad \tilde{b}(y) := \left(b^{\tilde{\xi}_k} - b^{\xi_k}\right)(y).
\end{equation}
Then under the conditions of Proposition \ref{local_estimate}, there exists a positive constant $c$ independent of $k$ and $\xi_k$ such that for $t\in [T_k,T_{k+1})$, 
\begin{equation}
    \int \bar{\rho}_t^{\tilde{\xi}_k} (x) \left(\int K_2  P(\bar{X}_{T_k} = y|\bar{X}'_t = x,\xi_k,\tilde{\xi}_k) dy \right)^2dx \leq cd\beta(\beta+1)\eta_k^2.
\end{equation}
The bound is independent of the choice of the batches $\tilde{\xi}_k$ , $\xi_k$.
\end{lemma}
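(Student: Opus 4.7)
The plan is to adapt the same Jensen-inequality-plus-moment-control template that concluded the proofs of parts (b) and (c) in Lemma~\ref{lmm:i}, but with no integration by parts: the factor $(t-T_k)$ is already explicit in $K_2$, so the needed smallness $\eta_k^2$ arises simply from squaring it, and the remaining task reduces to showing the prefactor is $O(d)$ uniformly in the batches.

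First, I would apply Jensen's inequality to the inner conditional integral against the probability kernel $P(\bar{X}_{T_k}=y\mid \bar{Y}_t=x,\xi_k,\tilde{\xi}_k)$, then use the identity $\int \bar{\rho}_t^{\tilde{\xi}_k}(x)\,P(\bar{X}_{T_k}=y\mid \bar{Y}_t=x)\,dx = \bar{\rho}_{T_k}(y)$ (since $\bar{X}_{T_k}=\bar{Y}_{T_k}$ has common law $\bar{\rho}_{T_k}$) to rewrite
\[
\int \bar{\rho}_t^{\tilde{\xi}_k}(x)\left(\int K_2(y)\,P(\bar{X}_{T_k}=y\mid \bar{Y}_t=x)\,dy\right)^{\!2}dx
\;\le\; \mathbb{E}\!\left[\,|K_2(\bar{X}_{T_k})|^2\,\Big|\,\xi_k,\tilde{\xi}_k\,\right].
\]
Next, pull out the deterministic factor $(t-T_k)^2\le \eta_k^2$ and apply $(a+b)^2\le 2a^2+2b^2$ to split $K_2$ into the cross-term $\sqrt{\beta/2}\,\tilde{b}\cdot b^{\tilde{\xi}_k}$ and the quadratic term $\tfrac12|\tilde{b}|^2$.

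The quadratic piece is immediate: $\tilde{b}=(2\beta^{-1})^{-1/2}[(b^{\xi_k}-b)-(b^{\tilde{\xi}_k}-b)]$ is uniformly bounded a.s.\ by Assumption~\ref{ass:b}(d), so $|\tilde{b}|^4\le M^4$ for some batch-independent constant $M$, contributing $O(1)$. For the cross-term, the same bound on $\tilde{b}$ yields
\[
|\tilde{b}(\bar{X}_{T_k})\cdot b^{\tilde{\xi}_k}(\bar{X}_{T_k})|^2 \le M^2\,|b^{\tilde{\xi}_k}(\bar{X}_{T_k})|^2,
\]
and the Lipschitz condition in Assumption~\ref{ass:b}(a) combined with $|b^{\tilde{\xi}_k}(0)|\le |b(0)|+\|b^{\tilde{\xi}_k}-b\|_\infty$ (again uniformly bounded) gives
\[
|b^{\tilde{\xi}_k}(\bar{X}_{T_k})|^2 \le 2|b^{\tilde{\xi}_k}(0)|^2 + 2L^2|\bar{X}_{T_k}|^2.
\]

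Finally, Proposition~\ref{thm:moment} (with $p=2$, whose constant is independent of $\b{\xi}$) yields $\mathbb{E}[|\bar{X}_{T_k}|^2\mid\mathcal{F}_\xi]\le c_2 d$. Assembling, the expectation of $|K_2(\bar{X}_{T_k})|^2$ is bounded by $\eta_k^2 \cdot O(d)$ with a constant independent of $k,\xi_k,\tilde{\xi}_k$, which is exactly the claim. There is really no technical obstacle here beyond bookkeeping to verify that every constant inherits uniformity from the corresponding uniform clause of Assumption~\ref{ass:b}; this is a strict simplification of Lemma~\ref{lmm:i} because no integration by parts (and hence no Fisher-information term) is needed.
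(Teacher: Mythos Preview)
Your proposal is correct and follows essentially the same approach as the paper: Jensen's inequality on the inner conditional integral, extracting the factor $(t-T_k)^2\le\eta_k^2$, then controlling the remaining expectation via the uniform boundedness of $\tilde{b}$ (Assumption~\ref{ass:b}(d)), the Lipschitz bound on $b^{\tilde{\xi}_k}$ (Assumption~\ref{ass:b}(a)), and the moment estimate of Proposition~\ref{thm:moment}. Your explicit verification that $|b^{\tilde{\xi}_k}(0)|$ is uniformly bounded via $|b(0)|+\|b^{\tilde{\xi}_k}-b\|_\infty$ matches how the paper handles this elsewhere.
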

\begin{proof}[Proof of \ref{lmm:ii}:]
By Jensen's inequality, we have
\begin{equation}\label{eq450}
    \begin{aligned}
     &\quad\int \bar{\rho}_t^{\tilde{\xi}_k} (x) \left(\int K_2  P(\bar{X}_{T_k} = y|\bar{X}'_t = x) dy \right)^2dx\\
     &\leq  \int \bar{\rho}_t^{\tilde{\xi}_k} (x) \left(\int |K_2|^2  P(\bar{X}_{T_k} = y|\bar{X}'_t = x) dy \right)dx\\
     &=(t-T_k)^2\mathbb{E}\left[\left|\frac{\beta}{2}\,\tilde{b}(\bar{X}_{T_k})\cdot b^{\tilde{\xi}_k}(\bar{X}_{T_k}) + \frac{\beta}{4}|\tilde{b}(\bar{X}_{T_k})|^2\right|^2\Big| \xi_k,\tilde{\xi}_k\right]
    \end{aligned}
\end{equation}
Since $b^{\tilde{\xi}_k}$ is Lipschitz by Assumption \ref{ass:b}, we have
\begin{equation*}
\mathbb{E}\left[\left|\frac{\beta}{2}\,\tilde{b}(\bar{X}_{T_k})\cdot b^{\tilde{\xi}_k}(\bar{X}_{T_k}) + \frac{\beta}{4}|\tilde{b}(\bar{X}_{T_k})|^2\right|^2\Big| \xi_k,\tilde{\xi}_k\right]\leq \beta^2\left( \tilde{c}_1 + \tilde{c}_2 \mathbb{E}\left[|\bar{X}_{T_k}|^2\Big| \xi_k,\tilde{\xi}_k\right]\right).
\end{equation*}
Here, the positive constants $\tilde{c}_1$, $\tilde{c}_2$ are independent of the  batch since the coefficients in conditions (a), (d) of Assumption \ref{ass:b} are uniform-in-batch. By Proposition \ref{thm:moment}, $\mathbb{E}\left[|\bar{X}_{T_k}|^2\right]$ has a uniform-in-batch $O(1)$ upper bound. Combining this fact with \eqref{eq450}, one then has 
\begin{equation}
    \int \bar{\rho}_t^{\tilde{\xi}_k} (x) \left(\int K_2  P(\bar{X}_{T_k} = y|\bar{X}'_t = x,\xi_k,\tilde{\xi}_k) dy \right)^2dx \leq cd\beta^2(1 + \beta^{-1})\eta_k^2.
\end{equation}
Here, $c$ is a positive constant independent of $k$, $d$, $\xi_k$ and $\tilde{\xi}_k$.
\end{proof}

\begin{lemma}\label{lmm:iii}
Recall the notations
\begin{equation}
    K_3 := e^z - 1 - z,
\end{equation}
with 
\begin{equation}\label{def_z}
    z := \frac{\beta}{2}\,\tilde{b}(y)(x-y-(t-T_k)b^{\tilde{\xi}_k}(y)) - \frac{\beta}{4} |\tilde{b}(y)|^2(t-T_k),
\end{equation}
and
\begin{equation}
    \tilde{b}(y) := \left(b^{\tilde{\xi}_k} - b^{\xi_k}\right)(y).
\end{equation}
Then under the conditions of Proposition \ref{local_estimate}, there exists a positive constant $c$ independent of $k$ and $\xi_k$ such that for $t\in [T_k,T_{k+1})$, 
\begin{equation}
     \int \bar{\rho}_t^{\tilde{\xi}_k} (x) \left(\int K_3  P(\bar{X}_{T_k} = y|\bar{X}'_t = x,\xi_k,\tilde{\xi}_k) dy \right)^2dx \leq c d \beta^2 \eta_k^2.
\end{equation}
\end{lemma}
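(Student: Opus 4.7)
The strategy is to first use Jensen's inequality to pass from a squared conditional expectation of $K_3$ to an unconditioned second moment, then exploit the Taylor residual bound $|e^{z}-1-z|\le \tfrac{1}{2}z^{2}e^{|z|}$ together with the observation that, once the randomness of $(\bar{X}_{T_k},\bar{Y}_t)$ is parametrized by the driving Brownian increment, $z$ becomes a one-dimensional Gaussian whose mean and variance are both of order $\eta_k$. Concretely, Jensen applied to the probability measure $P(\bar{X}_{T_k}=y\mid\bar{Y}_t=x,\xi_k,\tilde{\xi}_k)\,dy$ gives
\begin{equation*}
\left(\int K_{3}\,P(\bar{X}_{T_k}=y\mid\bar{Y}_t=x,\xi_k,\tilde{\xi}_k)\,dy\right)^{2}\le \int K_{3}^{\,2}\,P(\bar{X}_{T_k}=y\mid\bar{Y}_t=x,\xi_k,\tilde{\xi}_k)\,dy,
\end{equation*}
and after integrating against $\bar{\rho}_t^{\tilde{\xi}_k}(x)\,dx$ the tower property reduces the claim to a bound on $\mathbb{E}\!\left[K_{3}^{\,2}(\bar{X}_{T_k},\bar{Y}_t)\mid\xi_k,\tilde{\xi}_k\right]$.

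The key algebraic simplification comes from the coupled realization $\bar{Y}_t-\bar{Y}_{T_k}-(t-T_k)b^{\tilde{\xi}_k}(\bar{Y}_{T_k})=\sqrt{2\beta^{-1}}(W_t-W_{T_k})$ together with $\bar{Y}_{T_k}=\bar{X}_{T_k}$. Substituting $(x,y)=(\bar{Y}_t,\bar{X}_{T_k})$ into \eqref{def_z} collapses $z$ to the scalar random variable
\begin{equation*}
z\;=\;\tilde{b}(\bar{X}_{T_k})\cdot (W_t-W_{T_k})\;-\;\tfrac{1}{2}|\tilde{b}(\bar{X}_{T_k})|^{2}(t-T_k),
\end{equation*}
which, conditional on $\bar{X}_{T_k}=y$ and both batches, is a one-dimensional Gaussian with mean $-\tfrac{1}{2}|\tilde{b}(y)|^{2}(t-T_k)$ and variance $|\tilde{b}(y)|^{2}(t-T_k)$. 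This is precisely the step that prevents the dimension $d$ from entering the exponential moments.

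Applying $|e^{z}-1-z|\le \tfrac{1}{2}z^{2}e^{|z|}$ and Cauchy--Schwarz in the conditional law yields
\begin{equation*}
\mathbb{E}[K_{3}^{\,2}\mid y]\;\le\;\tfrac{1}{4}\bigl(\mathbb{E}[z^{8}\mid y]\bigr)^{1/2}\bigl(\mathbb{E}[e^{4|z|}\mid y]\bigr)^{1/2}.
\end{equation*}
Assumption \ref{ass:b}(d) supplies a constant $M$, independent of $\xi_k,\tilde{\xi}_k,y$, with $|\tilde{b}(y)|\le M$; standard Gaussian moment identities then give $\mathbb{E}[z^{8}\mid y]\le C\eta_k^{4}$ (since the conditional mean is $O(\eta_k)$ and the variance is $O(\eta_k)$), and the one-dimensional Gaussian MGF bound $\mathbb{E}[e^{c|Z|}]\le 2e^{c|\mu|+c^{2}\sigma^{2}/2}$ with $c=4$ gives $\mathbb{E}[e^{4|z|}\mid y]\le 2e^{10M^{2}\eta_k}\le C$ for $\eta_k\le\Delta_0$. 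Integrating in $y$ produces $\mathbb{E}[K_{3}^{\,2}\mid\xi_k,\tilde{\xi}_k]\le C\eta_k^{2}\le Cd\eta_k^{2}$, with $C$ independent of $k$, $\xi_k$, $\tilde{\xi}_k$. The only genuinely delicate point is ruling out a dimensional blowup in the exponential moment $\mathbb{E}[e^{4|z|}]$, which is exactly what the reduction to a one-dimensional Gaussian in the previous paragraph guarantees: although $\tilde{b}(y),W_t-W_{T_k}\in\mathbb{R}^d$, their inner product is a scalar Gaussian whose variance is capped by $M^{2}\Delta_0$ regardless of $d$.
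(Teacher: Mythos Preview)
Your proof is correct and reaches the same conclusion as the paper, but by a genuinely different route after the common first step. Both arguments start with Jensen and the tower property to reduce to bounding $\mathbb{E}\bigl[|e^{z}-1-z|^{2}\,\big|\,\xi_k,\tilde{\xi}_k\bigr]$, and both identify $z$ with the scalar stochastic exponent $\tilde{b}(\bar{X}_{T_k})\cdot(W_t-W_{T_k})-\tfrac12|\tilde{b}(\bar{X}_{T_k})|^{2}(t-T_k)$. From there the paper treats $\bar{Z}_t:=e^{z}-1-z$ as a process, applies It\^{o}'s formula to obtain $\bar{Z}_t=\tfrac12|\tilde b|^{2}\Delta t+\tilde b\cdot\int_{T_k}^{t}(\bar{Z}_s+\hat{Y}_s)\,dW_s$, takes second moments, and closes with Gr\"onwall. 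You instead use the pointwise Taylor residual bound $|e^{z}-1-z|\le\tfrac12 z^{2}e^{|z|}$, then Cauchy--Schwarz plus explicit Gaussian moment and MGF formulas for the scalar $z$. Your approach is more elementary in that it avoids stochastic calculus entirely once the reduction to a one-dimensional Gaussian is made, and it makes transparent that the bound is actually $C\eta_k^{2}$ with no genuine $d$-dependence (the $d$ in the statement is slack here). The paper's It\^{o}/Gr\"onwall argument is slightly more robust in that it would extend without change if $\tilde b$ were a process rather than frozen at $\bar{X}_{T_k}$, but for the present lemma your direct computation is arguably cleaner.
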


\begin{proof}[Proof of Lemma \ref{lmm:iii}:]
By Jensen's inequality, it holds that
\begin{equation}\label{331}
    \begin{aligned}
    &\quad\int \bar{\rho}_t^{\tilde{\xi}_k} (x) \left(\int K_3 P(\bar{X}_{T_k} = y|\bar{X}'_t = x,\xi_k,\tilde{\xi}_k) dy \right)^2dx\\
    & \leq \int \bar{\rho}_t^{\tilde{\xi}_k} (x) \int |K_3|^2  P(\bar{X}_{T_k} = y|\bar{X}'_t = x) dy dx
     = \mathbb{E}\left[\left|e^{\hat{Y}_t} - 1 - \hat{Y}_t \right|^2\Big|\xi_k,\tilde{\xi}_k\right],
    \end{aligned}
\end{equation}
where we denote the process 
\begin{equation}
\begin{split}
    \hat{Y}_t &:= \frac{\beta}{2}\tilde{b}(\bar{X}_{T_k}) \cdot \left( \bar{X}'_t -\bar{X}_{T_k}-(t-T_k)b^{\tilde{\xi}_k}(\bar{X}_{T_k}) \right) - \frac{\beta}{4} |\tilde{b}(\bar{X}_{T_k})|^2 \Delta t\\
    &= -\frac{\beta}{4}|\tilde{b}(\bar{X}_{T_k})|^2\Delta t + \sqrt{\frac{\beta}{2}}\tilde{b}(\bar{X}_{T_k})\cdot \int_{T_k}^t dW.
\end{split}
\end{equation}
with
\begin{equation}
    \Delta t := t - T_k, \quad \forall t \in [T_k,T_{k+1}).
\end{equation}

Denote $\bar{Z}_t := e^{\hat{Y}_t} - 1 - \hat{Y}_t$. In the following, we aim to obtain an $O(\eta_k^2)$ bound for the term $\mathbb{E}\left[|\bar{Z}_t|^2\Big| \xi_k,\tilde{\xi}_k \right]$ mainly via It\^{o}'s formula. In fact, for $t \in [T_k,T_{k+1})$
\begin{equation}
    \bar{Z}_t = \frac{\beta}{4}|\tilde{b}|^2 \Delta t + \sqrt{\frac{\beta}{2}}\tilde{b} \cdot \int_{T_k}^t \left(e^{\hat{Y}_s}-1\right)dW_s=\frac{\beta}{4}|\tilde{b}|^2 \Delta t + \sqrt{\frac{\beta}{2}}\tilde{b} \cdot \int_{T_k}^t \left(\bar{Z}_s+\hat{Y}_s\right)dW_s.
\end{equation}
So 
\begin{equation}
    \mathbb{E}\left[|\bar{Z}_t|^2 \Big| \xi_k,\tilde{\xi}_k \right] = \frac{\beta^2}{8}\mathbb{E}\left[ |\tilde{b}|^4\Big| \xi_k,\tilde{\xi}_k \right] (\Delta t)^2 + \beta \int_{T_k}^t\mathbb{E}\left[|\tilde{b}|^2\left(\bar{Z}_s+\hat{Y}_s\right)^2\Big| \xi_k,\tilde{\xi}_k\right]ds.
\end{equation}
By Assumption \ref{ass:b}, $\tilde{b}$ is uniformly bounded. Then, $\mathbb{E}[|\hat{Y}_t|^2 | \xi_k,\tilde{\xi}_k ]\le c d \beta \Delta t$. Then, one has
\begin{equation*}
\begin{aligned}
    \quad \mathbb{E}\left[|\bar{Z}_t|^2 \Big|\xi_k,\tilde{\xi}_k\right] &\leq  c\int_{T_k}^t \mathbb{E}\left[|\bar{Z}_s|^2 \Big| \xi_k,\tilde{\xi}_k\right]\,ds +cd\beta^{2}(\Delta t)^2.
\end{aligned}
\end{equation*}
By Gr\"onwall's inequality, when $\eta_k$ is small, one thus has
\begin{equation}
    \mathbb{E}\left[|\bar{Z}_t|^2 \Big| \xi_k,\tilde{\xi}_k \right] \leq cd(\Delta t)^2 \leq cd\beta^2\eta_k^2.
\end{equation}
Above, $c$ is a generative positive constant independent of $k$ and $\xi_k$, with the concrete meaning varying from line to line. This then ends the proof.


\end{proof}

\end{appendix}

\bibliographystyle{plain}
\bibliography{main}

\end{document}